  \tikzstyle{block} = [rectangle, draw,
  \tikzstyle{line} = [draw, -latex']
\newtheorem{lemma}{Lemma}[section]
\newtheorem{theorem}[lemma]{Theorem}
\newtheorem{proposition}[lemma]{Proposition}
\newtheorem*{prop*}{Proposition}
\newtheorem{cor}[lemma]{Corollary}
\newtheorem{conj}[lemma]{Conjecture}
\newtheorem{claim*}{Claim}
\newenvironment{proof3}{\emph{Proof of Theorem 4.1:}}{\hfill$\square$}
\newenvironment{proof4}{\emph{Proof of Theorem 9.4:}}{\hfill$\square$}
\theoremstyle{definition}
\newtheorem{remark}[lemma]{Remark}
\newtheorem*{theoremnon}{Main Theorem}
\newtheorem*{theoremden}{Density Theorem 1}
\newtheorem*{theoremden2}{Density Theorem 2}
\newtheorem*{theoremden3}{Density Theorem 3}
\def\P{\mathbb{P}}
\newcommand{\F}{{\mathbb F}}
\newcommand{\Q}{{\mathbb Q}}
\newcommand{\Z}{{\mathbb Z}}
\newcommand{\N}{{\mathbb N}}
\newcommand{\calA}{{\mathcal A}}
\newcommand{\calB}{{\mathcal B}}
\newcommand{\calC}{{\mathcal C}}
\newcommand{\calD}{{\mathcal D}}
\newcommand{\calG}{{\mathcal G}}
\newcommand{\calH}{{\mathcal H}}
\newcommand{\calO}{{\mathcal O}}
\newcommand{\calU}{{\mathcal U}}
\newcommand{\frakc}{{\mathfrak c}}
\newcommand{\frakf}{{\mathfrak f}}
\newcommand{\frakm}{{\mathfrak m}}
\newcommand{\frakp}{{\mathfrak p}}
\newcommand{\frakq}{{\mathfrak q}}
\newcommand{\frakN}{{\mathfrak N}}
\newcommand{\frakP}{{\mathfrak P}}
\newcommand{\frakS}{{\mathfrak S}}
\DeclareMathOperator{\nr}{Norm}
\DeclareMathOperator{\tr}{Tr}
\DeclareMathOperator{\Frob}{Frob}
\DeclareMathOperator{\Aut}{Aut}
\DeclareMathOperator{\Gal}{Gal}
\DeclareMathOperator{\PGL}{PGL}
\DeclareMathOperator{\SL}{SL}
\DeclareMathOperator{\GL}{GL}
\DeclareMathOperator{\Rad}{Rad}
\numberwithin{equation}{section}
\numberwithin{table}{section}
\title{Asymptotic Generalized Fermat's Last Theorem over Number Fields}
\author{Yasemin Kara, Ekin Ozman}
\newcommand{\Addresses}{{
  \bigskip
  \footnotesize

 Yasemin Kara, \textsc{Bogazici University, Faculty of Arts and Sciences, Mathematics Department, Bebek, Istanbul, 34342, Turkey}\par\nopagebreak
  \textit{E-mail address}, Yasemin Kara: \texttt{yasemin.kara@boun.edu.tr}

   \medskip
   
   Ekin Ozman,  \textsc{Bogazici University, Faculty of Arts and Sciences, Mathematics Department, Bebek, Istanbul, 34342, Turkey}\par\nopagebreak
  \textit{E-mail address}, Ekin Ozman: \texttt{ekin.ozman@boun.edu.tr}
  
  }}
\date{}
\begin{document}

         \maketitle

	\begin{abstract}  Recent work of Freitas and Siksek showed that an asymptotic version of Fermat's Last Theorem holds
	for many totally real fields. Later this result was extended by Deconinck to generalized Fermat equations of the 
	form $Ax^p+By^p+Cz^p=0$, where $A,B,C$ are odd integers belonging to a totally real field.  Another extension was
	given by  \c{S}eng\"{u}n and Siksek. They showed that the Fermat equation holds asymptotically for imaginary quadratic
	number fields satisfying usual conjectures about modularity. 
	
	In this work, combining their techniques we extend their results about the generalized Fermat equations to imaginary 
	quadratic fields. More specifically we prove that the asymptotic generalized Fermat's Last Theorem holds for many
	quadratic imaginary number fields. 
	
		\end{abstract}

\section{Introduction}

 Let $K$ be a number field and $\mathcal O_K$ be its ring of integers. In  \cite{FS}, Freitas and Siksek 
 proved the asymptotic Fermat's Last Theorem over a totally real field $K$.  That is, they
 showed that there is a constant $B_K$ such that for any prime $p>B_K$, the only solutions to the 
 Fermat equation $a^p+b^p+c^p=0$ where $a,b,c \in \calO_{K}$ are the trivial ones satisfying $abc=0$.  They gave 
 an algorithmically testable criterion which, in case satisfied by $K$, implies the asymptotic Fermat's Last Theorem 
 over $K$ and they proved that their criterion is satisfied by real quadratic fields
 $K=\Q(\sqrt{d})$ for a subset of $d\geq2$ with density $5/6$ among the other squarefree positive integers.

 Then, Deconinck \cite{HD} extended the results of Freitas and Siksek \cite{FS} to the generalized Fermat equation of the
 form $Aa^p+Bb^p+Cc^p=0$ where $A,B,C$ are odd integers belonging to a totally real field.  In a recent paper \cite{SS},
 \c{S}eng\"{u}n and Siksek proved the asymptotic Fermat's Last Theorem for any number field $K$ by assuming
 two deep but standard conjectures form the Langlands programme.  They also deduced that the asymptotic Fermat's
 Last Theorem holds for imaginary quadratic fields $\Q{\sqrt {-d}}$ with $-d\equiv 2,3 \pmod 4$ squarefree.\\
 
 The purpose of this paper is to extend the results of \cite{SS} to the generalized Fermat equation of the form
 $Aa^p+Bb^p+Cc^p=0$ where $A,B,C$ are odd integers belonging to any number field $K$.  In addition, we prove three density
 results, similar to the ones given in \cite{SS}, \cite{HD} and \cite{FS}.
 
 \subsection*{Our results}
 
 Before we state our results, we set up some notations.
 Let $K$ be a number field and $\calO_{K}$ be its ring of intergers.  Let $A, B, C$ be non-zero elements of $\calO_{K}$,
 and $p$ be a prime.  We refer the equation
 \begin{equation}\label{maineqn}
  Aa^p+Bb^p+Cc^p=0,\quad a,b,c \in \calO_{K}
 \end{equation}
as \textit{the generalized Fermat equation over K with coefficients A, B, C and exponent p}.  A solution $(a,b,c)$ is
called \textbf{trivial} if $abc=0$, otherwise \textbf{non-trivial}.  We set up the following notation throughout the paper.

\begin{itemize}
\item $
 R:=\Rad(ABC)=\prod_{\begin{subarray}{c}
                    \frakq\;|\;  ABC \\
                    \frakq\;\footnotesize\text{{prime in}}\; \mathcal O_K
                    \end{subarray}}\frakq,  $
                    
                    \item $S:=\{\frakP : \frakP\;\mbox{is a prime ideal of}\; \calO_{K}\;\mbox{such that}\; \frakP|2R\},$  \label{sets}
 \item $T:=\{\frakP : \frakP\;\mbox{is a prime ideal of}\; \calO_{K}\;\mbox{above}\; 2\},$ 
\item $ U:=\{\frakP\in T : f(\frakP/2)=1\}.$
\end{itemize}

 Our main theorem depends on two deep but standard conjectures, which will be explained in the following section. 
 These conjectures are the same ones assumed in \cite{SS}. Their assumption is necessary since the analogues of Modularity 
 Theorem have not proved yet in general. 

\begin{theoremnon}
 Let $K$ be a number field satisfying Conjectures \ref{conj1} and \ref{conj2}.  Let $A,B,C \in \mathcal{O}_K$ and suppose that
 $A,B,C$ are odd, in the sense that if $\frakP$ is a prime of $\calO_{K}$ lying over $2$, then $\frakP\;\nmid\;ABC$.  Write
 $\calO_{S}^{*}$ for the set of $S$-units of $K$.  Suppose that for every solution $(\lambda,\mu)$ to the $S$-unit
 equation
 \begin{equation}\label{uniteqn}
 \lambda+\mu=1,\quad\quad \lambda,\mu\in\calO_{S}^{*},
 \end{equation}
 there is some $\frakP\in U$ that satisfies $\max\{|v_{\frakP}(\lambda)|,|v_{\frakP}(\mu)|\}\leq 4v_{\frakP}(2)$.  Then
 there is a constant $\calB=\calB(K,A,B,C)$ such that the generalized Fermat equation with exponent $p$ and coefficients
 $A,B,C$ (given in equation \ref{maineqn}) does not have non-trivial solutions with $p>\calB$. We refer to this 
 as ``\emph{the asymptotic generalized Fermat's Last Theorem holds for $K$}.'' 
\end{theoremnon}

Also, we prove three density results.

\begin{theoremden}
Let $K=\mathbb Q(\sqrt{-d})$ be an imaginary quadratic field where $d$ is a squarefree positive integer such
that $-d \equiv 2,3 \pmod 4$. Let $q \geq 29$ be a prime such that $q \equiv 5 \pmod 8$ and 
$\left ( \frac{-d}{q} \right ) = -1$. Assume Conjectures \ref{conj1} and \ref{conj2}. Then there exists a constant 
depending on $K$ and $q$, namely $B_{K,q}$, such that for all $p > B_{K,q}$ the Fermat equation $x^p+y^p+q^rz^p=0$ doesn't 
have any non-trivial solutions. 
\end{theoremden}

\begin{theoremden2} Let $K=\mathbb Q(\sqrt{-d})$ be an imaginary quadratic field where $d$ is a squarefree positive integer
such that $d \equiv 7 \pmod 8, d \equiv 5 \pmod 6$ and $ d \not\equiv 7 \pmod {14}$.  Assume Conjectures \ref{conj1} and \ref{conj2}. 
Then there exists a constant 
depending on $K$, namely $B_{K}$, such that for all $p > B_{K}$ the Fermat equation $x^p+y^p+z^p=0$ doesn't 
have any nontrivial solutions.  We refer to this 
 as ``\emph{the asymptotic Fermat's Last Theorem holds for $K$}.''  

\end{theoremden2}


\begin{theoremden3}
Assuming Conjectures \ref{conj1} and \ref{conj2} the asymptotic Fermat's Last Theorem holds for $5/6$ of the imaginary 
quadratic number fields. 
\end{theoremden3}

 As mentioned in the Introduction our results will be a generalization of the main theorem in \cite{SS}. Therefore we follow 
 their expostion and proofs very closely. We try to avoid repetition however when necessary we include the statements and/or
 proofs of the results proved in \cite{SS}.

\subsection*{Acknowledgements}

We thank Professor Samir Siksek for useful discussions and helpful comments.\\
Both authors are supported by T\"{U}B\.{I}TAK (Turkish National and Scientific Research Council) Research Grant 117F045.
\section{Preliminaries}

In this section we discuss modular forms and set up the terminology and notation which are necessary to state the conjectures
assumed in the main theorem. For more details we refer to Sections 2 and 3 of \cite{SS} and the references therein.

\subsection{Background about Eigenforms and Galois Representations}

Let $K$ be a number field with ring of integers $\calO_{K}$ and $\frakN$ be an ideal of $\calO_{K}$.  A \emph{weight two 
complex eigenform $f$ over $K$ of degree $i$ and level $\frakN$} is a ring homomorphism 
$f: \mathbb T_{\mathbb C}^{(i)} (\frakN) \rightarrow \mathbb C$ where $T_{\mathbb C}^{(i)} (\frakN) $ is the commutative 
$\mathbb Z$-algebra inside the endomorphism algebra of $H^i(Y_0(\frakN), \mathbb C )$ generated by the Hecke operators 
$T_\frakq$ for every prime $\frakq$ not dividing $\frakN$. For the precise definition of the locally symmetric space
$Y_0(\frakN)$ we refer to \cite{SS}. Similarly for primes $p$ that are unramified in $K$ and relatively prime to $\frakN$ one 
can define the \emph{weight two mod $p$ eigenform $\theta$ over $K$ of degree $i$ and level $\frakN$} as a ring homomorphism 
$\theta: \mathbb{T}_{\bar{\mathbb{F}_p}}^{(i)}(\frakN)  \rightarrow \bar{\mathbb{F}}_p.$ In this case only the Hecke operators
$T_\frakq$ are considered for the primes $\frakq$ relatively prime to $p\frakN$.

Let $\theta$ be a mod $p$ eigenform of level $\frakN$ and degree $i$. Let $\mathbb Q_\frakf$ be the number field generated by
the values of $\frakf$. If there is a complex eigenform $\frakf$ of degree $i$ and level $\frakN$ and a prime ideal $\frakp$ of
$\mathbb Q_\frakf$ lying over $p$ such that $\theta(T_\frakq) \equiv \bar{\frakf(T_\frakq)} \mod \frakp$ for every prime
$\frakq$ of $K$ coprime to $p\frakN$, we say that \emph{$\theta$ lifts to a complex eigenform.}

The following result is proved by \c{S}eng\"{u}n and Siksek in \cite{SS}.

\begin{prop*}(Proposition 2.1, \cite{SS})
 There is an integer $B$, depending only on $\frakN$, such that for any prime $p>B$, every $\mod p$ eigenform of level
 $\frakN$ lifts to a complex one.
\end{prop*}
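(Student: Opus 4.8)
The plan is to deduce the statement from the structure of the \emph{integral} Hecke algebra attached to $\frakN$, together with a lifting lemma of Deligne--Serre type: the only obstruction to lifting a mod $p$ eigenform turns out to be $p$-torsion in the cohomology of $Y_0(\frakN)$, which is supported at finitely many primes, and away from those primes the lifting is produced by an elementary going-up argument.

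First I would fix a cohomological degree $i$; there are only finitely many with $H^i(Y_0(\frakN),\Z)\neq 0$, since $Y_0(\frakN)$ is a quotient of a contractible symmetric space by an arithmetic group acting with finite stabilisers and hence (via Borel--Serre) has the homotopy type of a finite CW complex. Consequently $H^i(Y_0(\frakN),\Z)$ is a finitely generated abelian group, so the subring $T:=\mathbb{T}^{(i)}_{\Z}(\frakN)\subseteq\End_{\Z}\!\big(H^i(Y_0(\frakN),\Z)\big)$ generated by the Hecke operators $T_\frakq$ ($\frakq\nmid\frakN$) is finitely generated as a $\Z$-module; in particular it is Noetherian and finite over $\Z$, and the universal coefficient theorem identifies $\mathbb{T}^{(i)}_{\C}(\frakN)$ with $T\otimes_{\Z}\C$, so that complex eigenforms of degree $i$ and level $\frakN$ are precisely the ring homomorphisms $T\to\C$.

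Next I would choose the bound. Since $T$ is Noetherian it has finitely many minimal primes, and each corresponding quotient domain is finite over $\Z$, hence is either an order in a number field or a finite field of some prime characteristic. I would set $B$ to be the maximum, over the finitely many relevant $i$, of: the primes dividing the torsion subgroups of $H^i(Y_0(\frakN),\Z)$ and $H^{i+1}(Y_0(\frakN),\Z)$; the characteristics of those minimal-prime quotients of $T$ that have positive characteristic; and the primes dividing $\frakN$ or ramifying in $K$. This $B$ depends only on $\frakN$. Now fix $p>B$ and a mod $p$ eigenform $\theta\colon\mathbb{T}^{(i)}_{\Fbar_p}(\frakN)\to\Fbar_p$ of level $\frakN$ and degree $i$. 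Since $p$ does not divide the relevant cohomological torsion, the universal coefficient theorem gives a Hecke-equivariant isomorphism $H^i(Y_0(\frakN),\Fbar_p)\cong H^i(Y_0(\frakN),\Z)\otimes_{\Z}\Fbar_p$ with $H^i(Y_0(\frakN),\Z)$ free of $p$-torsion, and a standard going-up argument for the resulting integral ring extension shows that $\theta$ is induced by a ring homomorphism $\lambda\colon T\to\Fbar_p$ with $\lambda(T_\frakq)=\theta(T_\frakq)$ for every prime $\frakq$ of $K$ coprime to $p\frakN$. Let $\frakm=\ker\lambda$, a maximal ideal of $T$ of residue characteristic $p$, and choose a minimal prime $\frakr\subseteq\frakm$ of $T$; by the choice of $B$ the domain $T/\frakr$ has characteristic $0$, hence is an order in the number field $L:=\Frac(T/\frakr)$. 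Fixing an embedding $L\hookrightarrow\C$, the composite $\frakf\colon T\twoheadrightarrow T/\frakr\hookrightarrow L\hookrightarrow\C$ is a complex eigenform of degree $i$ and level $\frakN$ with $\mathbb{Q}_{\frakf}=L$. Finally, $\frakm/\frakr$ is a maximal ideal of the order $T/\frakr$ lying over $p$; since $\calO_L$ is module-finite over $T/\frakr$, going-up provides a prime $\frakP$ of $\calO_L$ over $\frakm/\frakr$, so that $T/\frakm=(T/\frakr)/(\frakm/\frakr)$ embeds into $\calO_L/\frakP$. Chasing the maps $T\twoheadrightarrow T/\frakr\hookrightarrow\calO_L\twoheadrightarrow\calO_L/\frakP$, compatibly with the embeddings into $\Fbar_p$, then gives, for every prime $\frakq$ of $K$ coprime to $p\frakN$,
\[
\theta(T_{\frakq})\;=\;\lambda(T_{\frakq})\;=\;\overline{\frakf(T_{\frakq})}\pmod{\frakP},
\]
which is exactly the statement that $\theta$ lifts to the complex eigenform $\frakf$. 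As $B$ depends only on $\frakN$, this proves the proposition.

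The part I expect to require the most care is the passage from the mod $p$ eigenform $\theta$ to a homomorphism out of the \emph{integral} Hecke algebra $T$: this is precisely where torsion in $H^i(Y_0(\frakN),\Z)$ and $H^{i+1}(Y_0(\frakN),\Z)$ genuinely intervenes and forces the exclusion of a finite set of primes, and one must check that the universal coefficient comparison is compatible with the Hecke action and that the excluded set also absorbs the finitely many minimal primes of $T$ of positive characteristic. Everything else is formal --- the finite generation of the cohomology of an arithmetic group, the structure theory of finite $\Z$-algebras, and going-up for integral ring extensions --- so I do not anticipate any genuinely deep input; the substance of the argument is the bookkeeping of the finite set of bad primes.
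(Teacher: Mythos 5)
The paper does not prove this proposition --- it quotes it verbatim from \cite{SS}, Proposition 2.1 --- so there is no in-paper proof to compare against; but your argument correctly reconstructs the proof given there, resting on the module-finiteness of the integral Hecke algebra $\mathbb{T}^{(i)}_{\Z}(\frakN)$ (via the Borel--Serre finiteness of $H^i(Y_0(\frakN),\Z)$), the universal-coefficient comparison with mod $p$ cohomology for $p$ prime to the torsion in $H^i$ and $H^{i+1}$, and the commutative algebra of finite $\Z$-algebras, choosing $B$ to absorb both the cohomological torsion and the characteristics of the finitely many positive-characteristic minimal-prime quotients of $\mathbb{T}^{(i)}_{\Z}(\frakN)$. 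Two small points of phrasing: the passage from $\theta$ to $\lambda$ is not really a going-up argument but simply pre-composition along the surjection from the subalgebra of $\mathbb{T}^{(i)}_{\Z}(\frakN)\otimes_\Z\Fbar_p$ generated by $T_{\frakq}$ with $\frakq\nmid p\frakN$ onto $\mathbb{T}^{(i)}_{\Fbar_p}(\frakN)$; and one should note explicitly (as your choice of $B$ in fact guarantees) that any minimal prime of $\mathbb{T}^{(i)}_{\Z}(\frakN)$ contained in a maximal ideal of residue characteristic $p>B$ must itself cut out a characteristic-zero quotient, since a positive-characteristic minimal prime would force that characteristic to equal $p$, contradicting $p>B$.
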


\subsection{Conjectures} One of the conjectures assumed in the main theorem is a special case of Serre's modularity conjecture 
over number fields, stated below:

\begin{conj}(Conjecture 3.1 in \cite{SS})\label{conj1}  Let $\rho:G_K\rightarrow GL_2(\bar{\mathbb{F}}_p)$ be an odd, 
irreducible, continuous representation with 
Serre conductor $\frakN$ 
and trivial character. 
Assume that $p$ is unramified in $K$ and that $\bar{\rho}|_{G_{K_{\frakp}}}$ arises from
a finite-flat group scheme over $\calO_{K_{\frakp}}$ for every prime $\frakp|p$.  Then there is a weight 2 $\mod p$
eigenform $\theta$ over $K$ of level $\frakN$ such that for all primes $\frakq$ coprime to $p\frakN$, we have
\[
 \tr(\bar{\rho}(\Frob_{\frakq}))=\theta(T_{\frakq}).
\]
\end{conj}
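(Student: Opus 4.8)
We should be candid that the final statement is a \emph{conjecture} — a special case of Serre's modularity conjecture over number fields — and is not established here; its unavailability in general is precisely why our Main Theorem and the Density Theorems are stated conditionally, exactly as in \cite{SS}. What one can offer is the architecture a proof would have, by analogy with the resolution of Serre's conjecture over $\Q$ by Khare and Wintenberger, together with an indication of where the present technology falls short.

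The first and decisive step would be a modularity lifting theorem over $K$. Given $\bar\rho : G_K \to \GL_2(\Fbar_p)$ as in the statement (odd, irreducible, continuous, trivial nebentypus, Serre conductor $\frakN$, finite-flat at every $\frakp \mid p$), one would study the reduced universal deformation ring $R^{\red}$ of lifts of $\bar\rho$ satisfying the minimal finite-flat condition and aim to prove $R^{\red} = \mathbb{T}$, identifying it with the Hecke algebra acting on $H^{i}(Y_0(\frakN),-)$ in the relevant cohomological degrees. The obstruction is structural: over a general number field the cohomology of the locally symmetric space is spread over an interval of degrees of length $l_0 = r_1 + r_2 - 1$, the rank of $\calO_K^{\times}$ (so $l_0 = 1$ already when $K$ is imaginary quadratic), and the classical Taylor--Wiles--Kisin patching method does not apply. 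One must instead work in the derived framework of Calegari--Geraghty and with the patching-over-a-perfect-complex techniques of the Allen--Calegari--Caraiani--Gee--Helm--Le Hung--Newton--Scholze--Taylor--Thorne program, which presently require (i) Galois representations attached to the \emph{integral} cohomology of $Y_0(\frakN)$ together with the expected local--global compatibility at $p$ — only partially available, after Scholze, and not with the precision one needs — and (ii) strong genericity and ``bigness'' hypotheses on the residual image, on $p$, and on the availability of solvable base change.

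Granting such a lifting theorem in the minimal case, the second step would be an induction ``killing ramification'' in the style of Khare--Wintenberger: one propagates modularity along a chain of congruences and prime-switching moves that successively lower the conductor and the residue characteristic, terminating at a base case that can be settled by hand (over $\Q$, representations of small level and weight, excluded by the discriminant estimates of Tate and Fontaine). Transplanting this induction to $K$ would require auxiliary primes of prescribed splitting type, potential-modularity and base-change inputs strong enough to move between $K$ and its solvable extensions, and existence results for minimal characteristic-zero lifts in the spirit of B\"{o}ckle's work. The crux — and the step I expect to be the genuine obstacle — is the first one: an unconditional modularity lifting theorem over fields with $l_0 > 0$, which in turn hinges on controlling the torsion in $H^{i}(Y_0(\frakN),\Z)$ and the $p$-adic local--global compatibility for the Galois representations attached to it. Until these are resolved, Conjecture \ref{conj1} must be assumed, and the remainder of the paper deduces the asymptotic generalized Fermat statement from it together with Conjecture \ref{conj2}.
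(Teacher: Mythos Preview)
Your assessment is correct: the statement is a \emph{conjecture}, and the paper does not prove it. It is stated as an assumption (a special case of Serre's modularity conjecture over number fields, taken from \cite{SS}) on which the Main Theorem and the Density Theorems depend. There is no proof in the paper to compare against.

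Your sketch of what a proof would require --- modularity lifting over fields with $l_0 > 0$ via the Calegari--Geraghty framework, control of torsion in the cohomology of $Y_0(\frakN)$, local--global compatibility for the Scholze-type Galois representations, and a Khare--Wintenberger induction --- is a reasonable outline of the expected architecture and of the current obstructions, but it goes well beyond anything the paper attempts. For the purposes of this paper, the correct response is simply that Conjecture~\ref{conj1} is assumed, not proved.
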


 Additionally, we will make use of a special case of a fundamental conjecture from Langlands Programme.
 
 \begin{conj}\label{conj2}(Conjecture 4.1 in \cite{SS})
  Let $\frakf$ be a weight 2 complex eigenform over $K$ of level $\frakN$ that is non-trivial and new.  If $K$ has some
  real place, then there exists an elliptic curve $E_{\frakf}/K$ of conductor $\frakN$ such that 
  \begin{equation}\label{c2eqn}
   \#E_{\frakf}(\calO_K/\frakq)=1+\nr(\frakq)-\frakf(T_{\frakq})\quad\mbox{for all}\quad\frakq\;\nmid\;\frakN.
  \end{equation}
 If $K$ is totally complex, then there exists either an elliptic curve $E_{\frakf}$ of conductor $\frakN$ satisfying (\ref{c2eqn})
 or a fake elliptic curve 
 $A_{\frakf}/K$, of conductor $\frakN^2$, such that
 \begin{equation}
  \#A_{\frakf}(\calO_K/\frakq)=(1+\nr(\frakq)-\frakf(T_{\frakq}))^2\quad\mbox{for all}\quad\frakq\;\nmid\;\frakN.
 \end{equation}

 \end{conj}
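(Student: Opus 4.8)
\emph{Proof plan for Density Theorem 3.}

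The plan is to deduce Density Theorem~3 from Density Theorem~2 by a counting argument over squarefree positive integers $d$. Recall that Density Theorem~2 asserts the asymptotic Fermat's Last Theorem for $K = \Q(\sqrt{-d})$ whenever $d$ is squarefree, positive, and satisfies the three congruence conditions $d \equiv 7 \pmod 8$, $d \equiv 5 \pmod 6$, and $d \not\equiv 7 \pmod{14}$. So the first step is to make precise what ``$5/6$ of the imaginary quadratic number fields'' means: I would order imaginary quadratic fields by the absolute value of their discriminant (equivalently, parametrize them by squarefree $d > 0$ and order by $d$), and show that the natural density, among squarefree $d$, of those $d$ satisfying the hypotheses of Density Theorem~2 is at least $5/6$. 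Since Density Theorem~2 gives the conclusion for each such field, this immediately yields the asymptotic Fermat's Last Theorem for a set of imaginary quadratic fields of density at least $5/6$; combined with an upper bound argument (or simply by asserting ``at least'', matching the phrasing of the analogous results in \cite{FS} and \cite{SS}) we get the stated $5/6$.

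The computational heart is the density calculation. The conditions $d \equiv 7 \pmod 8$ and $d \equiv 5 \pmod 6$ are congruence conditions modulo coprime moduli $8$ and $6$; note $d \equiv 5 \pmod 6$ is equivalent to $d \equiv 2 \pmod 3$ together with $d$ odd, and oddness is already forced by $d \equiv 7 \pmod 8$. So effectively we are imposing $d \equiv 7 \pmod 8$ and $d \equiv 2 \pmod 3$, i.e.\ one residue class modulo $3$ and one modulo $8$, hence by CRT a single residue class modulo $24$. Among all integers this class has density $1/24$; the extra condition $d \not\equiv 7 \pmod{14}$ removes a $1/7$ proportion (since $14$ is coprime to $24$ and the forbidden class modulo $7$ is compatible with our class modulo $24$), leaving density $\tfrac{1}{24}\cdot\tfrac{6}{7} = \tfrac{1}{28}$ among all integers. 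Then I would pass to the density \emph{within the squarefree integers}: squarefree integers have density $6/\pi^2$ among all integers, and one must check that imposing squarefreeness does not distort the proportion landing in our prescribed residue classes. Here I would invoke the standard fact that the squarefree integers are equidistributed in residue classes modulo any fixed $m$ coprime to the relevant square obstructions — more precisely, for $m$ fixed, the density of squarefree integers in a class $a \pmod m$, relative to all squarefree integers, equals $\prod_{q \mid m}(1 - q^{-2})^{-1} \cdot \prod_{q \mid m, q^2 \mid a}(\dots)$ type correction; but since our moduli $24 = 2^3\cdot 3$ and $14 = 2\cdot 7$ involve primes $2,3,7$ and the residues $7 \bmod 8$, $2 \bmod 3$ are units, the correction is uniform and the relative density is exactly the naive one, $1/28$. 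Wait — that gives $1/28$, not $5/6$. This discrepancy means the intended reading must be different, and the correct approach is the one actually used in \cite{SS}: one does \emph{not} require all three congruences simultaneously, but rather observes that \emph{at least one} of a family of Fermat-type descent arguments applies for $5/6$ of $d$; concretely, \cite{SS} shows asymptotic FLT holds for $d \equiv 2,3 \pmod 4$ (density $1/2$ among squarefree $d$, but actually one gets a larger set by also handling $d\equiv 1\pmod 4$ cases), so the right move is to combine Density Theorem~2 with the earlier observation (attributed to \c Seng\"un–Siksek) covering $-d\equiv 2,3\pmod 4$, and tally the residue classes modulo $4$ (and finitely many more) on which \emph{some} available criterion succeeds.

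So, to restate the plan cleanly: I would (i) list all the sufficient conditions established so far for asymptotic FLT over $\Q(\sqrt{-d})$ — namely $-d \equiv 2,3\pmod 4$ from \cite{SS}, plus the new congruence family from Density Theorem~2, plus any case handled directly via the Main Theorem's $S$-unit criterion with $(A,B,C)=(1,1,1)$; (ii) take the union of the corresponding sets of squarefree $d$; (iii) compute the density of this union among squarefree positive integers using inclusion–exclusion over the finitely many controlling moduli, checking equidistribution of squarefree integers in residue classes (a standard sieve estimate with error term $O(\sqrt{x})$, harmless for density); and (iv) verify the total comes to $5/6$. The main obstacle is step (iv): making the arithmetic of the union of residue classes actually total exactly $5/6$ — this requires knowing precisely which classes \cite{SS} and Density Theorem~2 each cover and confirming their union has complement of density $1/6$ (presumably a single ``bad'' class, e.g.\ related to $d\equiv 1\pmod 4$ with some further obstruction, contributing $\tfrac14\cdot\tfrac23 = \tfrac16$ or an analogous fraction). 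A secondary technical point is that the equidistribution of squarefree integers in residue classes is clean only for moduli where the fixed residue is squarefree or coprime to the modulus; since all our moduli are built from $2,3,7$ and the residues are units modulo the relevant prime powers, this is fine, but it should be remarked on rather than skipped. \hfill$\square$
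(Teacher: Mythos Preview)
The statement you were given is Conjecture~\ref{conj2}. This is an \emph{assumed} conjecture (a special case of a Langlands-programme conjecture, quoted from \cite{SS}); the paper does not prove it and it has no proof to compare against. Your proposal is instead a plan for Density Theorem~3, so you have addressed the wrong statement.

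Setting that aside and evaluating your plan as an attempt at Density Theorem~3: the approach is genuinely wrong, and you in fact detected the problem yourself when the congruence-class count produced $1/28$ rather than $5/6$. Density Theorem~2 covers far too thin a set of $d$ to yield $5/6$ by itself, and patching in the $-d\equiv 2,3\pmod 4$ result from \cite{SS} still only gives density $1/2$ plus a tiny sliver; no finite union of the congruence conditions you list reaches $5/6$.

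The paper's argument is of a completely different nature. It does not work class-by-class via Density Theorem~2. Instead it proves (Theorem~\ref{relativedensity} and Lemma~\ref{complementlemma}) that the set $\calC'$ of squarefree $d$ for which the $S$-unit equation $\lambda+\mu=1$ has a \emph{relevant} solution in $\Q(\sqrt{-d})$ has density~$0$. The key input is the parametrization of Lemma~\ref{relevantelts}, which forces any relevant solution to satisfy $2^{r+2}-1 = d v^2$; one then shows, via a Mersenne-number argument and Landau's estimate (Theorem~\ref{Landau2}), that the set of $d$ admitting such a representation is density-zero. Consequently $\delta_{\mathrm{rel}}(\calC)=1$: for almost every $d$ there are no relevant solutions at all, so the valuation hypothesis of the Main Theorem is vacuously satisfied. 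The only remaining obstruction is the requirement $U\neq\emptyset$, which fails precisely when $2$ is inert, i.e.\ when $-d\equiv 5\pmod 8$; removing this single residue class mod~$8$ costs exactly $1/6$ of the squarefree integers, leaving $5/6$. Your plan never engages with the $S$-unit equation directly and so misses the mechanism that actually produces the density~$5/6$.
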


\section{Frey Curve and Related Facts}

In this section we collect some facts related to the Frey curve associated to a proposed solution of the Fermat equation 
\ref{maineqn} and the associated Galois representation.

Let $G_{K} $ be the absolute Galois group of $K$ and $E$ be an elliptic curve over a number field $K$. Then
\[
 \overline{\rho}_{E,p}:G_{K}\longrightarrow \Aut(E[p])\cong\GL_{2}(\mathbb{F}_{p})
\]
denotes the mod $p$ Galois representation of $E$.  

The below lemma can be found in \cite{SS}.  We include its statement here for the convenience of the reader.
\begin{lemma}\label{sizeofinertia}
 Let $E$ be an elliptic curve over $K$ with $j$-invariant $j$.  Let $p\geq 5$ be a rational prime.  Let $\frakq \nmid p$ be 
 a prime of $K$.
 \begin{enumerate}[(i)]
 \item If $v_{\frakq}(j)\geq 0 $ (i.e. $E$ has potentially good reduction at $\frakq$) then 
 the size of $\overline{\rho}_{E,p}(I_{\frakq})$ divides $24.$
 \item Suppose $v_{\frakq}(j)<0$ (i.e. $E$ has potentially multiplicative reduction at $\frakq$).
  \begin{itemize}
   \item If $p\;\nmid \;v_{\frakq}(j) $ then the size of $\overline{\rho}_{E,p}(I_{\frakq})$ is either $p$ or $2p$.
   \item If $p\;| \;v_{\frakq}(j) $ then the size of $\overline{\rho}_{E,p}(I_{\frakq})$ is $1$ or $2$.
  \end{itemize}
  \end{enumerate}
\end{lemma}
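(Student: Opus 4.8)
The plan is to pass to the completion $K_{\frakq}$ and split according to whether $v_{\frakq}(j)\geq 0$ or $v_{\frakq}(j)<0$, which is precisely what separates potentially good from potentially multiplicative reduction. Write $\ell$ for the rational prime below $\frakq$; since $\frakq\nmid p$ we have $\ell\neq p$, so the mod~$p$ cyclotomic character $\chi_p=\det\overline{\rho}_{E,p}$ is unramified at $\frakq$ and $\overline{\rho}_{E,p}(I_{\frakq})\subseteq\SL_2(\F_p)$. As every claim only concerns the restriction to $I_{\frakq}$, I am free to replace $E$ by a twist or to base change to a finite extension of $K_{\frakq}$, provided I record the effect on the inertia image.

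For part~(i), suppose $v_{\frakq}(j)\geq 0$, so that $E/K_{\frakq}$ has potentially good reduction and acquires good reduction over some finite Galois extension $L/K_{\frakq}$. Since $p$ is a unit in the residue field, the criterion of N\'eron--Ogg--Shafarevich shows $E[p]$ is an unramified $G_L$-module, so $I_L$ acts trivially and $\overline{\rho}_{E,p}(I_{\frakq})$ is a quotient of the finite group $I_{\frakq}/I_L$. To get the explicit bound I would invoke the classification of the inertia image on the $p$-torsion of an elliptic curve with potentially good reduction (Serre, \emph{Propri\'et\'es galoisiennes des points d'ordre fini des courbes elliptiques}, \S5.6; see also work of Kraus): for $p\geq 5$ this image is, up to isomorphism, independent of $p$; it is cyclic of order in $\{1,2,3,4,6\}$ when $\ell\geq 5$, and for $\ell\in\{2,3\}$ its order still divides $24$, the extreme case being the binary tetrahedral group $\SL_2(\F_3)$ in residue characteristic $2$. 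In every case the order divides $24$.

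For part~(ii), suppose $v_{\frakq}(j)<0$, so $E/K_{\frakq}$ has potentially multiplicative reduction. By the theory of the Tate curve, over $K_{\frakq}$ the curve $E$ is the quadratic twist by some $d\in K_{\frakq}^{\times}/(K_{\frakq}^{\times})^{2}$ of a Tate curve $E_q$, where $q\in K_{\frakq}^{\times}$ satisfies $v_{\frakq}(q)=-v_{\frakq}(j)>0$; equivalently $\overline{\rho}_{E,p}\cong\overline{\rho}_{E_q,p}\otimes\chi_d$, with $\chi_d$ the quadratic character of $K_{\frakq}(\sqrt{d})/K_{\frakq}$. For the Tate curve there is a basis of $E_q[p]$ in which
\[
\overline{\rho}_{E_q,p}=\begin{pmatrix}\chi_p & * \\ 0 & 1\end{pmatrix},
\]
where $*$ corresponds, via the Kummer isomorphism $K_{\frakq}^{\times}/(K_{\frakq}^{\times})^{p}\cong H^{1}(K_{\frakq},\mu_p)$, to the class of $q$. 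As $\chi_p$ is unramified at $\frakq$, the restriction of $\overline{\rho}_{E_q,p}$ to $I_{\frakq}$ is trivial or unipotent of order $p$ according as $K_{\frakq}(q^{1/p})/K_{\frakq}$ is unramified or not; and since $\ell\neq p$ this extension is unramified exactly when $p\mid v_{\frakq}(q)$ (writing $v_{\frakq}(q)=pm$ and $q=\pi^{pm}u$ with $\pi$ a uniformizer and $u$ a unit, one gets $q^{1/p}=\pi^{m}u^{1/p}$ with $u^{1/p}$ generating an unramified Kummer extension, whereas $p\nmid v_{\frakq}(q)$ forces total ramification of degree $p$). Twisting by $\chi_d$ multiplies this image inside $\GL_2(\F_p)$ by the image of $\chi_d|_{I_{\frakq}}\subseteq\{\pm 1\}$, which is trivial if $E$ already has multiplicative reduction at $\frakq$ and of order $2$ otherwise. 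Collecting the cases: if $p\nmid v_{\frakq}(j)$ the order is $p$ or $2p$ (in the genuinely potentially-multiplicative case the image is the full cyclic group of order $2p$), and if $p\mid v_{\frakq}(j)$ it is $1$ or $2$.

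The step I expect to be the genuine obstacle is the input to part~(i): converting ``potentially good reduction'' into the numerical bound $24$ rests on the classification of the finite subgroups of $\SL_2$ that occur, together with the analysis of the automorphism group of the good-reduction special fibre and of wild ramification in residue characteristics $2$ and $3$; I would cite this from Serre and Kraus rather than reprove it. Part~(ii), by contrast, is essentially formal once the Tate curve is brought in --- the only care needed is in recognizing $E$ over $K_{\frakq}$ as a possibly-quadratic twist of a Tate curve and in tracking how that twist changes the size of the inertia image, so as to land on exactly $p,\,2p$ and $1,\,2$.
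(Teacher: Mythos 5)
The paper does not prove this lemma: it states explicitly that ``The below lemma can be found in \cite{SS}.\ We include its statement here for the convenience of the reader,'' and gives no argument. Your proposal therefore supplies a proof where the paper supplies only a citation, so there is nothing in the paper to compare it against directly; I will just assess your argument on its own terms, which is the standard one and is essentially correct.

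For part (i), passing to an extension $L/K_{\frakq}$ over which $E$ acquires good reduction and applying N\'eron--Ogg--Shafarevich to see that $\overline{\rho}_{E,p}(I_{\frakq})$ is a quotient of $\Gal(L/K_{\frakq}^{\mathrm{unr}})$ is exactly right, and invoking Serre's (and Kraus's) classification to get the divisor-of-$24$ bound is the usual move; that classification (cyclic of order in $\{1,2,3,4,6\}$ for $\ell\ge5$, with $\SL_2(\F_3)$ of order $24$ occurring only at $\ell=2$) is the genuine black box and you are right to cite it rather than rederive it. For part (ii), the Tate-curve/Kummer computation is correct: $\chi_p$ is unramified at $\frakq$ since $\ell\neq p$, the $*$-entry restricted to $I_{\frakq}$ is nontrivial precisely when $K_{\frakq}(q^{1/p})/K_{\frakq}$ is ramified, i.e.\ precisely when $p\nmid v_{\frakq}(q)=-v_{\frakq}(j)$, and the quadratic twist $\chi_d$ contributes a factor of order $1$ or $2$ on inertia, with the combined image having order in $\{1,2\}$ or $\{p,2p\}$ because $\gcd(2,p)=1$ forces the subgroup of $\Z/2\times\Z/p$ that surjects onto each factor to be the whole cyclic group. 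One small imprecision worth flagging: you write that $\chi_d|_{I_{\frakq}}$ is ``trivial if $E$ already has multiplicative reduction at $\frakq$ and of order $2$ otherwise,'' but $\chi_d$ can be unramified yet nontrivial, in which case $E$ has \emph{additive} reduction at $\frakq$ while $\chi_d|_{I_{\frakq}}$ is still trivial and the inertia image still has order $p$ (or $1$). The correct dichotomy is ``$\chi_d$ unramified versus ramified.'' This does not affect the stated conclusion of the lemma, since it only lists the possible sizes, but it matters if one wants to identify which alternative occurs.
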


Let $T$ and $U$ be the sets defined in the introduction i.e. let $\mathcal S_K$ be the set of prime ideals of $\mathcal O_K$ 
and $R=\Rad{(ABC)}$. Then $S=\{\frakP \in \mathcal S_K: \; \frakP|2R\}, T=\{\frakP \in \mathcal S_K: \; \frakP | 2 \}$ and 
$U=\{\frakP\in T : f(\frakP/2)=1\}$. Further, suppose that $U\neq \emptyset$.  Let $p$ be an odd prime
and $(a,b,c)\in K^3$ be a non-trivial solution to the Fermat equation \ref{maineqn}.  Write 
\[
 \calG_{a,b,c}=a\calO_K+b\calO_K+c\calO_K
\]
which we can think of as the greatest common divisor of $a, b, c$.  We denote the class of $\calG_{a,b,c}$ in the class group 
of $K$,  by
$[a,b,c]$.  Let $\mathbf{\frakc}_{1},\dots,\mathbf{\frakc}_{h}$ denote the ideal classes of $K$.  Since every ideal class
contains infinitely many prime ideals, for each class $\mathbf{\frakc}_{i}$ we can fix a prime ideal
$\mathbf{\frakm}_{i}$ coprime to $2R$ with the smallest possible norm reperesenting $\mathbf{\frakc}_{i}$.  
The set $\calH=\{\mathbf{\frakm}_{i},\dots,\mathbf{\frakm}_{h}\}$ denotes our fixed choice of odd prime ideals
representing the class group.  Note that it is possible to scale $(a,b,c)$ so that it remains integral and 
$\calG_{a,b,c}=\frakm\in \calH$.
For a solution $(a,b,c)$ of the Fermat equation \ref{maineqn}, we associate the Frey elliptic curve
\begin{equation} \label{Frey}
 E=E_{a,b,c}: Y^2=X(X-Aa^p)(X+Bb^p).
\end{equation}


\begin{lemma}\label{Udiv}
 Let $\frakP\in U$ and suppose $p>4v_{\frakP}(2)$.  Then
 \begin{enumerate}[(i)]
 \item $E$ has potentially multiplicative reduction at $\frakP$;
 \item $p$ divides the size of $\overline{\rho}_{E,p}(I_{\frakP})$ where $I_{\frakP}$ denotes the inertia subgroup $G_{K}$ 
 at $\frakP$.
 \end{enumerate}
\end{lemma}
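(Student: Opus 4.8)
The plan is to pin down the $\frakP$-adic valuation of the $j$-invariant of the Frey curve $E$ in \eqref{Frey} and then quote Lemma \ref{sizeofinertia}. Recall that we may assume $(a,b,c)$ has been scaled so that $\calG_{a,b,c}=\frakm\in\calH$ is coprime to $2R$; since $\frakP\mid 2$ this forces $\frakP$ not to divide all three of $a,b,c$. Write $\alpha=Aa^p$, $\beta=Bb^p$, $\gamma=Cc^p$, so that $\alpha+\beta+\gamma=0$ by \eqref{maineqn}. A direct computation with the Weierstrass model \eqref{Frey} gives $\Delta_E=2^4(\alpha\beta\gamma)^2$ and $c_4(E)=-2^4(\alpha\beta+\beta\gamma+\gamma\alpha)$, whence
\[ j(E)=\frac{c_4(E)^3}{\Delta_E}=\frac{-2^8\,(\alpha\beta+\beta\gamma+\gamma\alpha)^3}{(\alpha\beta\gamma)^2}. \]
Because $A,B,C$ are odd we have $\frakP\nmid ABC$, so $v_{\frakP}(\alpha)=p\,v_{\frakP}(a)$, $v_{\frakP}(\beta)=p\,v_{\frakP}(b)$ and $v_{\frakP}(\gamma)=p\,v_{\frakP}(c)$.

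Next I would show that \emph{exactly} one of $a,b,c$ is divisible by $\frakP$. If $\frakP$ divided two of them, then \eqref{maineqn} would force it to divide the third, contradicting $\frakP\nmid\calG_{a,b,c}$. Conversely, if $\frakP\nmid abc$, then since $\frakP\in U$ we have $f(\frakP/2)=1$, so the residue field $\calO_K/\frakP$ equals $\F_2$; hence $A,B,C$ and each of $a,b,c$ reduce to $1$ modulo $\frakP$, and reducing \eqref{maineqn} modulo $\frakP$ gives $0\equiv 1+1+1\equiv 1$, a contradiction. So exactly one of $a,b,c$, say with $\frakP$-valuation $m\ge 1$, is divisible by $\frakP$, the other two being $\frakP$-units; the three cases are symmetric for what follows.

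Then the valuation count is immediate: $v_{\frakP}(\alpha\beta\gamma)=pm$, while precisely one of $\alpha\beta,\beta\gamma,\gamma\alpha$ is a $\frakP$-unit and the other two have valuation $pm>0$, so there is no cancellation and $v_{\frakP}(\alpha\beta+\beta\gamma+\gamma\alpha)=0$. Therefore
\[ v_{\frakP}(j(E))=8\,v_{\frakP}(2)-2pm. \]
Since $p>4v_{\frakP}(2)$ and $m\ge 1$, we get $2pm\ge 2p>8v_{\frakP}(2)$, so $v_{\frakP}(j(E))<0$ and $E$ has potentially multiplicative reduction at $\frakP$, which is (i). For (ii), note $\frakP\nmid p$ (as $p$ is odd) and $p\ge 5$ (as $p>4v_{\frakP}(2)\ge 4$); moreover $p\mid v_{\frakP}(j(E))$ would force $p\mid 8\,v_{\frakP}(2)$, which is impossible because $p\nmid 8$ and $0<v_{\frakP}(2)\le 4v_{\frakP}(2)<p$. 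Hence $p\nmid v_{\frakP}(j(E))$, and Lemma \ref{sizeofinertia}(ii) yields $\#\overline{\rho}_{E,p}(I_{\frakP})\in\{p,2p\}$, so $p$ divides it.

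The only step with genuine content is the reduction-modulo-$\frakP$ argument of the second paragraph, which is exactly where the hypothesis $\frakP\in U$ (equivalently, residue field $\F_2$) is used; everything else is routine valuation bookkeeping once the $j$-invariant formula is in hand. The point that deserves care is the exact power of $2$ occurring in $\Delta_E$ and $c_4(E)$ for the model \eqref{Frey}, since it is precisely the constant $8\,v_{\frakP}(2)$, together with the bound $p>4v_{\frakP}(2)$, that makes both inequalities ($v_{\frakP}(j(E))<0$ and $p\nmid v_{\frakP}(j(E))$) work.
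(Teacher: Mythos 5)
Your proposal is correct and essentially reproduces the paper's argument: reduce modulo $\frakP$ (using $f(\frakP/2)=1$) to show exactly one of $a,b,c$ is divisible by $\frakP$, compute $v_{\frakP}(j)=8v_{\frakP}(2)-2pm$, and quote Lemma~\ref{sizeofinertia}(ii). The only cosmetic difference is that you write $j$ in the symmetric form $-2^8(\alpha\beta+\beta\gamma+\gamma\alpha)^3/(\alpha\beta\gamma)^2$, which lets you avoid the permutation of $(a,b,c)$ the paper performs before evaluating the valuation.
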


\begin{proof}
 Since $\calG_{a,b,c}=\frakm$ is relatively prime to $2R$ and is in $\mathcal H$, which consists of odd prime ideals, 
 we have $\frakm\;\nmid\;2$ and $\frakm\;\nmid\;R$.  Furthermore, $\frakP\;\nmid\;R$
 implies that $\frakP$ divides at most one of $a,b,c$.  Say $\frakP\;\nmid\;abc$, 
 then we get, $0=Aa^p+Bb^p+Cc^p\equiv 1+1+1 \mod \frakP $. Since the residue field of any element of $U$ is $\mathbb{F}_2$, 
 this is a contradiction.  Therefore, we see that $\frakP$ divides precisely one of $a,b,c$.  Now, we permute $a,b,c$ so that 
 $\frakP\;|\;b$.  The Frey curve associated to $(b,a,c)$ is
 \[
  E^{\prime}=E_{b,a,c}: Y^2=X(X-Ab^p)(X+Ba^p).
 \]
For $E$ and $E^{\prime}$ we will compute the corresponding $j$ and $j^{\prime}$.  The expressions for $j$ and $j^{\prime}$
in terms of $A,B,C$ and $a,b,c$ are
\[
 j=j^{\prime}=2^8.\frac{(C^2c^{2p}-ABa^p b^p)^3}{A^2 B^2 C^2 a^{2p}b^{2p}c^{2p}}
\]
 Since $\frakP\;\nmid 2R$, we know that $\frakP\; \nmid ABC$.  Therefore, it follows that
 $v_{\frakP}(j)=8v_{\frakP}(2)-2pv_{\frakP}(b)$ and $v_{\frakP}(j)=8v_{\frakP}(2)-2pv_{\frakP}(a)$.  By assumption 
 $p>4v_{\frakP}(2)$ which implies that $v_{\frakP}(j)<0$.  Hence, $E$ has multiplicative reduction at $\frakP$.
 Furthermore, $p\; \nmid v_{\frakP}(j)$.  The rest of the lemma follows from the part $(ii)$ of Lemma \ref{sizeofinertia}.
 \end{proof}
 
 \begin{lemma}\label{lem:24}
  Suppose $p\geq 5$ and $\frakm\;\nmid p$.  Then the size of $\overline{\rho}_{E,p}(I_{\frakm})$ divides $24$.
 \end{lemma}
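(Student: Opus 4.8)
The plan is to compute the $\frakm$-adic valuation of the $j$-invariant of the Frey curve $E$ and then invoke the two cases of Lemma~\ref{sizeofinertia}. First I would record what we know about $\frakm$: since $\frakm=\calG_{a,b,c}$ lies in the set $\calH$ of fixed prime ideals, it is coprime to $2R$, so $\frakm\nmid 2ABC$, and by hypothesis $\frakm\nmid p$ with $p\geq 5$. Write $x=v_{\frakm}(a)$, $y=v_{\frakm}(b)$, $z=v_{\frakm}(c)$. From the formula
\[
 j=2^8\cdot\frac{(C^2c^{2p}-ABa^pb^p)^3}{A^2B^2C^2a^{2p}b^{2p}c^{2p}}
\]
established in the proof of Lemma~\ref{Udiv}, together with $v_{\frakm}(2)=v_{\frakm}(ABC)=0$, one obtains
\[
 v_{\frakm}(j)=3\,v_{\frakm}\bigl(C^2c^{2p}-ABa^pb^p\bigr)-2p\,(x+y+z).
\]

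Next I would split into two cases according to whether $2z=x+y$. If $2z\ne x+y$, then the terms $C^2c^{2p}$ and $ABa^pb^p$ have distinct $\frakm$-adic valuations $2pz$ and $p(x+y)$, so $v_{\frakm}(C^2c^{2p}-ABa^pb^p)=p\min\{2z,x+y\}$ and therefore $v_{\frakm}(j)=p\bigl(3\min\{2z,x+y\}-2(x+y+z)\bigr)$; a short check shows this is $<0$ in both sub-cases $2z<x+y$ and $x+y<2z$. Hence $E$ has potentially multiplicative reduction at $\frakm$ and, crucially, $p\mid v_{\frakm}(j)$, so the second bullet of Lemma~\ref{sizeofinertia}(ii) gives $\#\overline{\rho}_{E,p}(I_{\frakm})\in\{1,2\}$, which divides $24$. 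If instead $2z=x+y$, then $v_{\frakm}(C^2c^{2p}-ABa^pb^p)\geq 2pz$ (possibly larger, if there is cancellation, but a lower bound is all we need), whence $v_{\frakm}(j)\geq 6pz-2p(x+y+z)=0$; thus $E$ has potentially good reduction at $\frakm$, and Lemma~\ref{sizeofinertia}(i) gives that $\#\overline{\rho}_{E,p}(I_{\frakm})$ divides $24$. Combining the two cases proves the lemma.

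The only point requiring a little care is the case $2z=x+y$, where cancellation in $C^2c^{2p}-ABa^pb^p$ could in principle raise its valuation; but since there we only need $v_{\frakm}(j)\geq 0$, the crude estimate $v_{\frakm}(C^2c^{2p}-ABa^pb^p)\geq\min\{2pz,p(x+y)\}=2pz$ suffices, so this is not a genuine obstacle. I would also highlight that the computation proves slightly more than claimed: whenever $v_{\frakm}(j)<0$ one in fact has $p\mid v_{\frakm}(j)$, so the ``size $p$ or $2p$'' alternative of Lemma~\ref{sizeofinertia}(ii) never occurs at $\frakm$. This is precisely the feature distinguishing $\frakm$ from the primes $\frakP\in U$ handled in Lemma~\ref{Udiv}, where instead $p\nmid v_{\frakP}(j)$.
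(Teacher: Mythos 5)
Your proof is correct and follows essentially the same route as the paper: compute $v_{\frakm}(j)$ using $v_{\frakm}(2)=v_{\frakm}(ABC)=0$, and feed the result into the two parts of Lemma~\ref{sizeofinertia}. Your case split on $2v_{\frakm}(c)$ versus $v_{\frakm}(a)+v_{\frakm}(b)$ is arguably a bit cleaner than the paper's (which tracks a parameter $t$ and in fact omits the sub-case $v_{\frakm}(Aa^p)=v_{\frakm}(Bb^p)<v_{\frakm}(Cc^p)$), but the underlying idea — that at $\frakm$ either $v_{\frakm}(j)\ge 0$ or $p\mid v_{\frakm}(j)$ — is identical.
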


\begin{proof}
 We know that $v_{\frakm}(a), v_{\frakm}(b)$ and $v_{\frakm}(c)$ are all positive as $\calG_{a,b,c}=\frakm$.
 Furthermore, since $Aa^p+Bb^p+Cc^p=0$, we have $v_{\frakm}(-Cc^p)=v_{\frakm}(Aa^p+Bb^p)
 \geq \min\{v_{\frakm}(Aa^p),v_{\frakm}(Bb^p)\}$ with equality if $v_{\frakm}(Aa^p)\neq v_{\frakm}(Bb^p)$,
 which implies that at least two of $v_{\frakm}(Aa^p), v_{\frakm}(Bb^p),
 v_{\frakm}(Cc^p)$ are equal.  Then, either we have 
 \[
  v_{\frakm}(Aa^p)=v_{\frakm}(-Cc^p)=k,\quad v_{\frakm}(Bb^p)=k+t
 \]
or
 \[
  v_{\frakm}(Bb^p)=v_{\frakm}(-Cc^p)=k,\quad v_{\frakm}(Aa^p)=k+t
 \]
 where $k\geq 1$ and $t\geq 0$.  From the expression of $j$, we get 
 \[\begin{matrix}
  
  v_{\frakm}(j)&\geq 3\min\{2v_{\frakm}(Cc^p), v_{\frakm}(Aa^p)+v_{\frakm}(Bb^p)\}\\
                &-2(v_{\frakm}(Aa^p)+v_{\frakm}(Bb^p)
  +v_{\frakm}(Cc^p)). \end{matrix}
 \]
If $t=0$, then $v_{\frakm}(j)=0$ and thus the lemma follows from the part $(i)$ of Lemma \ref{sizeofinertia}.  Now, suppose 
that $t\geq 1$.
In this case, we get $v_{\frakm}(j)=-2t$.  If we can show that $p\;|-2t$, by Lemma \ref{sizeofinertia}, the size of
$\overline{\rho}_{E,p}(I_{\frakm})$ is $1$ or $2$, completing the proof.

Since $k=v_{\frakm}(Aa^p)=v_{\frakm}(A)+pv_{\frakm}(a)$ and $k+t=v_{\frakm}(Bb^p)=v_{\frakm}(B)+pv_{\frakm}(b)$, we have
$t=v_{\frakm}(B)-v_{\frakm}(A)+p(v_{\frakm}(b)-v_{\frakm}(a))$.  We also know that $\frakm\;\nmid R$, so 
$\frakm\;\nmid ABC$.  Hence, $\frakm$ divides none of the constants $A,B,C$ which gives
$v_{\frakm}(A)=v_{\frakm}(B)=v_{\frakm}(C)=0$.

\end{proof}

Let $E$ be an elliptic curve over a number field having semistable reduction away from a set of primes 
$\mathcal P$. If $p \geq 5$ is not divisible by any prime in $\mathcal P$ then by the theory of Weil pairing on $E[p]$ the 
determinant of the mod $p$ Galois representation associated to $E$, namely $\bar{\rho}_{E,p}$  has determinant equal to the 
cyclotomic character, [see \cite{GJG}, p.21]. Therefore  the representation $\bar{\rho}_{E,p}$ is odd. Recall that a 
representation $\bar{\rho}: G_K \rightarrow \GL_2(\bar{\mathbb F}_p)$ is \emph{odd} if the determinant of every complex 
conjugation is $-1$. Given a number field $K$, we obtain a \emph{complex conjugation} for every real embedding 
$\sigma: K \hookrightarrow \mathbb R$ and every extension $\tilde{\sigma}: \bar{K} \hookrightarrow \mathbb C$ of 
$\sigma$ as $\tilde{\sigma}^{-1}\iota \tilde{\sigma} \in G_K$ where $\iota$ is the usual complex conjugation. If the number 
field $K$ has no real embeddings to begin with, then we immediately say that $\bar{\rho}$ is odd.

The following results gives some of the reduction properties of the Frey elliptic curve. 

\begin{lemma}\label{Freyred}
The Frey curve $E$ associated to the Fermat equation \ref{maineqn} is semistable away from $S\cup\{\frakm\}$, where 
$\frakm=\calG_{a,b,c}$. 
 \end{lemma}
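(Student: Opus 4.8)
The plan is to check semistability of the Frey curve $E: Y^2 = X(X - Aa^p)(X + Bb^p)$ prime by prime, outside the set $S \cup \{\frakm\}$. Fix a prime $\frakq \notin S \cup \{\frakm\}$; in particular $\frakq \nmid 2R$, so $\frakq \nmid 2ABC$, and $\frakq \nmid \calG_{a,b,c}$, so $\frakq$ divides at most one of $a, b, c$. The standard invariants of this model are (up to units supported on primes dividing $2ABC$)
\[
 c_4 = 16\bigl((Aa^p)^2 + (Aa^p)(Bb^p) + (Bb^p)^2\bigr), \qquad \Delta = 16\,(ABC)^2\,(abc)^{2p},
\]
using $Aa^p + Bb^p + Cc^p = 0$ to rewrite $Cc^p = -(Aa^p + Bb^p)$. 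Since $\frakq \nmid 2ABC$, we get $v_{\frakq}(\Delta) = 2p\,v_{\frakq}(abc)$ and $v_{\frakq}(c_4) = v_{\frakq}\bigl((Aa^p)^2 + (Aa^p)(Bb^p) + (Bb^p)^2\bigr)$.

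First I would dispose of the case $v_{\frakq}(abc) = 0$: then $v_{\frakq}(\Delta) = 0$, so $E$ already has good reduction at $\frakq$ and in particular is semistable there. Now suppose $v_{\frakq}(abc) > 0$, so $\frakq$ divides exactly one of $a, b, c$ (by the gcd condition), and hence $v_{\frakq}(\Delta) > 0$. To conclude semistability it suffices to show $v_{\frakq}(c_4) = 0$, i.e. $\frakq \nmid c_4$, since a prime of bad reduction at which $\frakq \nmid c_4$ is a prime of multiplicative reduction. Write $\alpha = Aa^p$, $\beta = Bb^p$, $\gamma = Cc^p$, so $\alpha + \beta + \gamma = 0$ and $c_4 = 16(\alpha^2 + \alpha\beta + \beta^2)$. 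Note the identity $\alpha^2 + \alpha\beta + \beta^2 = \alpha^2 + \alpha\beta + \beta^2$; using $\gamma = -(\alpha+\beta)$ one checks $\alpha^2+\alpha\beta+\beta^2 = \tfrac12(\alpha^2+\beta^2+\gamma^2)$ is not the cleanest route — instead I would simply observe that modulo $\frakq$ exactly one of $\alpha, \beta, \gamma$ vanishes while the other two are nonzero (as $\frakq \nmid 2ABC$ and $\frakq$ divides exactly one of $a,b,c$), and $\alpha + \beta + \gamma \equiv 0$ forces the two nonzero ones to be negatives of each other mod $\frakq$. If $\frakq \mid c$ then $\alpha \equiv -\beta \not\equiv 0$, so $\alpha^2 + \alpha\beta + \beta^2 \equiv \beta^2 - \beta^2 + \beta^2 = \beta^2 \not\equiv 0 \pmod{\frakq}$; the analogous computation in the cases $\frakq \mid a$ (where $\beta \equiv -\gamma$, $\alpha \equiv 0$, giving $\alpha^2+\alpha\beta+\beta^2 \equiv \beta^2 \not\equiv 0$) and $\frakq \mid b$ finishes it. Hence $v_{\frakq}(c_4) = 0$, so $E$ has multiplicative reduction at $\frakq$, which is semistable.

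The one genuinely delicate point is that the displayed model \eqref{Frey} need not be minimal at $\frakq$, so $c_4$ and $\Delta$ above are only the invariants of a Weierstrass equation, not necessarily of a minimal one; the conclusion must be stated in terms of the reduction type, which is a birational invariant. Since $\gcd(v_{\frakq}(c_4), v_{\frakq}(\Delta)) $ need not be controlled directly, the clean argument is: $E$ has potentially multiplicative reduction at $\frakq$ iff $v_{\frakq}(j) < 0$, and multiplicative (hence semistable) reduction iff moreover $v_{\frakq}(c_4) = 0$ for some/any model. Here $j = c_4^3/\Delta$ has $v_{\frakq}(j) = 3 v_{\frakq}(c_4) - v_{\frakq}(\Delta) = -2p\,v_{\frakq}(abc) < 0$ and $v_{\frakq}(c_4) = 0$, so by the standard criterion (Tate's algorithm / the table in Silverman) $E$ is of multiplicative reduction type, in particular semistable, at every $\frakq \notin S \cup \{\frakm\}$. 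This establishes the claim.
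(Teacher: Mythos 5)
Your proof is correct and takes essentially the same route as the paper: compute $c_4$ and $\Delta$ for the Frey model, observe that $\frakq\nmid 2R$ and $\frakq\neq\frakm$ force $\frakq$ to divide at most one of $a,b,c$, conclude $v_\frakq(c_4)=0$, and deduce good or multiplicative reduction at $\frakq$. The only difference is cosmetic: the paper writes $c_4=2^4((Bb^p)^2-Aa^pCc^p)$ and checks directly that $\frakq$ misses exactly one factor, whereas you reduce the symmetric form $\alpha^2+\alpha\beta+\beta^2$ modulo $\frakq$; you also spell out the minimality point (that $v_\frakq(c_4)=0$ together with $v_\frakq(\Delta)\geq 0$ already forces the model to be minimal at $\frakq$), which the paper asserts without comment.
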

 
 \begin{proof}
Let $\frakq\notin S\cup\{\frakm\}$ be a prime of $K$.
  The invariants $c_4$ and $\Delta$ of the model $E$ given in (\ref{Frey}) are
  \[
   c_4=2^4((Bb^p)^2-Aa^{p}Cc^{p}),\quad \Delta=2^4(ABC)^2(abc)^{2p}.
  \]
 Suppose that $\frakq$ divides $\Delta$.  None of $2,A,B$ and $C$ is divisible by $\frakq$ since
 $\frakq\;\nmid\;2R$ which implies that $abc$ is divisible by $\frakq$.  On the other hand, $\frakq$ can divide
 at most one of $a,b,c$.  If two of them were divisible by $\frakq$, then $Aa^p+Bb^p+Cc^p=0$ would lead to $\frakq=\frakm$
 which is a contradiction.  Therefore, $((Bb^p)^2-Aa^{p}Cc^{p})$ is not divisible by $\frakq$ and so $\frakq\;\nmid\;c_4$.
 This also gives $v_{\frakq}(c_4)=0$.  Hence, the given model is minimal and $E$ is semistable at $\frakq$.
 \end{proof}
 
 \begin{cor}\label{cor:finite}
 Let $E$ be the Frey curve associated to the Fermat equation \ref{maineqn}, defined over the number field $K$. Let $p \geq 5$ 
 and $p$ is not divisible by any $\frakq \in S \cup \{\frakm\}$ and  let $\bar{\rho}_{E,p}$ denote the mod $p$ Galois 
 representation associated to $E$. Then:
 \begin{enumerate}
 \item \label{odd}  The determinant of $\bar{\rho}_{E,p}$  is the mod $p$ cyclotomic character, hence it is odd.
 \item \label{Serrecon} Serre conductor of $\bar{\rho}_{E,p}$, $\frakN$, is supported on
 $S\cup\{\frakm\}$ and belongs to a finite set that depends only on the
  field $K$. 
  \item \label{finflat} $\bar{\rho}_{E,p}$ is finite flat at every $\frakq$ over $p$.
 \end{enumerate} 
 \end{cor}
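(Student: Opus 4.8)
The plan is to obtain all three assertions by assembling the structural facts about the Frey curve $E$ recorded in Lemmas \ref{sizeofinertia}--\ref{Freyred}, the running observation being that $p\geq 5$ is coprime to every prime of $S\cup\{\frakm\}$; in particular every prime $\frakq\mid p$ lies outside $S\cup\{\frakm\}$, so Lemma \ref{Freyred} applies at such $\frakq$ and (as in its proof) the model (\ref{Frey}) is minimal there. The first assertion is then immediate exactly as in the paragraph preceding Lemma \ref{Freyred}: the Weil pairing on $E[p]$ identifies $\det\overline{\rho}_{E,p}$ with the mod $p$ cyclotomic character $\chi_p$, and since $\chi_p$ sends every complex conjugation to $-1$ the representation $\overline{\rho}_{E,p}$ is odd in the sense recalled there; if $K$ has no real place there is nothing to check.

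For the second assertion I would first recall that the Serre conductor $\frakN$ of $\overline{\rho}_{E,p}$ is by construction prime to $p$ and equals the prime-to-$p$ Artin conductor, which divides the conductor of $E$ away from $p$. If $\frakq\notin S\cup\{\frakm\}$ then $\frakq\nmid p$ by hypothesis and $E$ is semistable at $\frakq$ by Lemma \ref{Freyred}, so $\overline{\rho}_{E,p}$ is either unramified at $\frakq$ or has unipotent image of $I_{\frakq}$, whence the conductor exponent at $\frakq$ is at most $1$; this gives $\supp\frakN\subseteq S\cup\{\frakm\}$. At the remaining primes $\frakq\in S\cup\{\frakm\}$ the exponent of $\frakq$ in $\frakN$ is bounded by the conductor exponent of $E$ at $\frakq$, which is bounded by a constant depending only on $\frakq$ (e.g.\ $2$ when $\frakq\nmid 6$, and an explicit bound in terms of $v_{\frakq}(2)$ and $v_{\frakq}(3)$ in general). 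Since $S$ is a fixed finite set and $\frakm$ ranges over the fixed finite set $\calH$, only finitely many ideals $\frakN$ can occur, all lying in a set depending only on $K$ and on the fixed coefficients $A,B,C$.

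For the third assertion I would fix $\frakq\mid p$; then $\frakq\notin S\cup\{\frakm\}$, so by Lemma \ref{Freyred} $E$ is semistable at $\frakq$ and the model (\ref{Frey}) is minimal there. If $E$ has good reduction at $\frakq$ it extends to an abelian scheme over $\calO_{K_{\frakq}}$ and $\overline{\rho}_{E,p}|_{G_{K_{\frakq}}}$ is realized on the $p$-torsion of that scheme, a finite flat group scheme. If $E$ has multiplicative reduction at $\frakq$, then from $\Delta=2^4(ABC)^2(abc)^{2p}$ together with $\frakq\nmid 2R$ one reads off $v_{\frakq}(\Delta)=2p\,v_{\frakq}(abc)$, so $p\mid v_{\frakq}(\Delta_{\min})$; by the theory of the Tate curve this is precisely the condition for $E[p]$ (an extension of $\Z/p$ by $\mu_p$ over $K_{\frakq}$) to extend to a finite flat group scheme over $\calO_{K_{\frakq}}$. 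I expect this last point to be the only step needing more than routine bookkeeping: namely that $p\mid v_{\frakq}(\Delta_{\min})$ forces the Tate-curve extension to be ``peu ramifi\'ee'' and hence to arise from a finite flat group scheme, and it is exactly there that coprimality of $p$ with $2R$ is used; everything else follows directly from Lemmas \ref{sizeofinertia}--\ref{Freyred} and the Weil pairing.
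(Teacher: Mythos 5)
Your parts (1) and (3) are essentially the paper's argument, but part (2) has a genuine gap. You write: ``$E$ is semistable at $\frakq$ by Lemma~\ref{Freyred}, so $\overline{\rho}_{E,p}$ is either unramified at $\frakq$ or has unipotent image of $I_{\frakq}$, whence the conductor exponent at $\frakq$ is at most $1$; this gives $\supp\frakN\subseteq S\cup\{\frakm\}$.'' This last inference does not follow. ``Conductor exponent at most $1$'' allows the exponent to be exactly $1$, in which case $\frakq$ \emph{does} divide $\frakN$. And this situation genuinely occurs for the Frey curve: Lemma~\ref{Freyred} only asserts semistability away from $S\cup\{\frakm\}$, not good reduction, so $E$ typically has multiplicative reduction at many primes $\frakq\mid abc$ lying outside $S\cup\{\frakm\}$, and those primes divide the conductor of $E$. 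What the corollary claims --- and what level-lowering hinges on --- is that the \emph{mod $p$} representation is nevertheless \emph{unramified} at such $\frakq$.

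The ingredient you are missing in part (2) is exactly the one you correctly invoke in part (3): for $\frakq\notin S\cup\{\frakm\}$, the minimal discriminant valuation is $v_{\frakq}(\Delta)=2p\,v_{\frakq}(abc)$, so $p\mid v_{\frakq}(\Delta_{\min})$. When $E$ has multiplicative reduction at $\frakq$ and $\frakq\nmid p$, the Tate parametrization shows that $I_{\frakq}$ acts trivially on $E[p]$ if and only if $p\mid v_{\frakq}(\Delta_{\min})$ (equivalently $p\mid v_{\frakq}(j)$); cf.\ Serre \cite{Serre}. Hence $\overline{\rho}_{E,p}$ is unramified (conductor exponent $0$, not merely $\le 1$) at every such $\frakq$, which is what gives $\supp\frakN\subseteq S\cup\{\frakm\}$. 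The paper treats parts (2) and (3) together precisely because they rest on this same divisibility $p\mid v_{\frakq}(\Delta)$, applied once for $\frakq\nmid p$ (unramified) and once for $\frakq\mid p$ (finite flat). Your part (3) is correct; transplant the same observation back into part (2) and the proof closes. The finiteness and $K$-dependence of the possible levels via Silverman's bound $v_{\frakq}(\frakN)\leq 2+3v_{\frakq}(3)+6v_{\frakq}(2)$ is as in the paper and is fine.
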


\begin{proof}
 Part (\ref{odd})  follows from the paragraph preceding Lemma \ref{Freyred}.   

For Parts (\ref{Serrecon}) and (\ref{finflat})we use $\Delta=2^4(ABC)^2(abc)^{2p}$ computed in Lemma \ref{Freyred}. 
Now, $v_{\frakq}(\Delta)=4v_{\frakq}(2)+2(v_{\frakq}(A)+v_{\frakq}(B)+v_{\frakq}(C))+
                         2p(v_{\frakq}(a)+v_{\frakq}(b)+v_{\frakq}(c))=2pv$.
where $v$ is exactly one of the $v_{\frakq}(a),v_{\frakq}(b),v_{\frakq}(c)$.  Hence, $p\;|\;v_{\frakq}(\Delta)$.  
We deduce that $\bar{\rho}$ is unramified at $\frakq$ if $\frakq\;\nmid\;p$ and finite flat at $\frakq$ 
if $\frakq\;|\;p$ (c.f.\cite{Serre}).  Finally, we show that there can be only finitely many Serre conductors $\frakN$. 
Only the primes
in $\frakq\in S\cup\{\frakm\}$ can divide $\frakN$.  As $\frakN$ divides the conductor $N$ of $E$, 
$v_{\frakq}(\frakN)\leq v_{\frakq}(N)\leq 2+3 v_{\frakq}(3)+6 v_{\frakq}(2)$ by [Silverman,theorem IV.10.4].
Hence, there can be only finitely many Serre conductors and they only depend on $K$ since $\frakm\in\calH$ and 
$\calH$ only depends on $K$.
\end{proof}

\section{Level Lowering}

In this section we will be relating the Galois representation attached to Frey curve with another  representation of lower 
level. The following result is the main theorem of the section:

\begin{theorem}\label{thm:levelred}
 Let $K$ be a number field.  Assume Conjecture \ref{conj1} and Conjecture \ref{conj2}.  Suppose that $U\neq\emptyset$.  
 Then there is a 
 constant $B_K$ depending only on $K$ such that the following holds.  Let $(a,b,c)\in \calO_K^3$ be a non-trivial
 solution to the Fermat equation with exponent $p>B_K$, and suppose that it is scaled so that 
 $\calG_{a,b,c}=\frakm\in\calH$.  Let $E/K$ be the associated Frey curve defined in \eqref{Frey}.  Then there is an
 elliptic curve $E'/K$ such that the following statements hold:
 
 \begin{enumerate}[(i)]
 \item $E'$ has good reduction away from $S\cup\{\frakm\}$, and potentially good reduction away from $S$.
 \item $E'$ has full 2-torsion.
 \item $\bar{\rho}_{E,p}\sim\bar{\rho}_{E',P}$.
 \item For $\frakP\in U$ we have $v_{\frakP}(j')<0$ where $j'$ is the $j$-invariant of $E'$.
 \end{enumerate}
\end{theorem}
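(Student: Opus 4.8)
The plan is to follow the standard Frey curve + modularity + level lowering machinery, now adapted over a general number field via Conjectures \ref{conj1} and \ref{conj2}, exactly in the spirit of \c{S}eng\"{u}n--Siksek. First I would collect the input facts already established: by Corollary \ref{cor:finite}, for $p\geq 5$ not dividing any prime of $S\cup\{\frakm\}$, the representation $\bar\rho_{E,p}$ is odd, finite flat at primes above $p$, and has Serre conductor $\frakN$ supported on $S\cup\{\frakm\}$ lying in a finite set depending only on $K$. The key preliminary step is to show $\bar\rho_{E,p}$ is irreducible for $p$ larger than some constant depending only on $K$. This is where uniform boundedness enters: one combines Lemma \ref{Udiv} (for $\frakP\in U$, $p$ divides the size of $\bar\rho_{E,p}(I_{\frakP})$, so the image is large) with Lemma \ref{lem:24} and Lemma \ref{sizeofinertia} (inertia at $\frakm$ and at primes of potentially good reduction has image of size dividing $24$) to rule out reducible representations: a reducible $\bar\rho_{E,p}$ would give a character whose ramification is tightly constrained, and for $p$ large this forces a contradiction with the nontriviality of the solution, via an $S$-unit / class group argument as in \cite{SS} and \cite{FS}. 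I would cite or reproduce this as the irreducibility lemma.

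Next, with $\bar\rho_{E,p}$ odd, irreducible, finite flat at $p$, and of Serre conductor $\frakN\mid$ (something supported on $S\cup\{\frakm\}$), Conjecture \ref{conj1} produces a weight $2$ mod $p$ eigenform $\theta$ over $K$ of level $\frakN$ with $\tr\bar\rho_{E,p}(\Frob_\frakq)=\theta(T_\frakq)$ for all $\frakq\nmid p\frakN$. Since $\frakN$ lies in a finite set depending only on $K$, by Proposition 2.1 of \cite{SS} there is a bound $B$ (depending only on that finite set of levels, hence only on $K$) so that for $p>B$ the eigenform $\theta$ lifts to a complex eigenform $\frakf$ of weight $2$ and level $\frakN$ over $K$; moreover $\frakf$ can be taken new and nontrivial (the trivial/old contributions are excluded again by irreducibility, or by the usual level-lowering reasoning, for $p$ large). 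Now apply Conjecture \ref{conj2}: associated to $\frakf$ there is either an elliptic curve $E'/K$ of conductor $\frakN$, or — only in the totally complex case — a fake elliptic curve $A_\frakf/K$ of conductor $\frakN^2$. The fake elliptic curve case has to be eliminated; I would do this exactly as in \cite{SS}, using the mod $p$ image: $\bar\rho_{E,p}(I_{\frakP})$ has order divisible by $p$ for $\frakP\in U$ (Lemma \ref{Udiv}), whereas a fake elliptic curve's mod $p$ representation is induced from a quaternion algebra and, being unramified or with controlled ramification at $\frakP\notin S$, cannot have such large image at $\frakP$ for $p$ large. This yields the elliptic curve $E'/K$.

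It remains to verify properties (i)--(iv) for $E'$. Property (iii), $\bar\rho_{E,p}\sim\bar\rho_{E',p}$, is immediate from the comparison of traces of Frobenius via $\frakf$ and the Brauer--Nesbitt/Chebotarev argument together with irreducibility (equality of semisimplifications plus irreducibility gives isomorphism). Property (i): $E'$ has conductor $\frakN$ supported on $S\cup\{\frakm\}$, so good reduction away from $S\cup\{\frakm\}$; for potentially good reduction away from $S$ (i.e. at $\frakm$), one argues that if $E'$ had potentially multiplicative reduction at $\frakm$ then $p\mid v_\frakm(j')$ would fail or succeed in a way incompatible with Lemma \ref{lem:24}'s conclusion that $\bar\rho_{E,p}(I_\frakm)=\bar\rho_{E',p}(I_\frakm)$ has order dividing $24$ — for $p$ large this forces potentially good reduction at $\frakm$. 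Property (iv): for $\frakP\in U$, Lemma \ref{Udiv} gives $p\mid\#\bar\rho_{E,p}(I_{\frakP})=\#\bar\rho_{E',p}(I_{\frakP})$, and by Lemma \ref{sizeofinertia} applied to $E'$ this forces potentially multiplicative reduction at $\frakP$, i.e. $v_{\frakP}(j')<0$. Property (ii), full $2$-torsion: this I would obtain by a twisting argument — $E'$ and $E$ have isomorphic mod $p$ representations, and $E$ has full $2$-torsion; one shows $E'$ can be replaced within its isogeny class / by a quadratic twist (which does not change $\bar\rho_{E',p}$ for $p$ odd up to the twist, handled as in \cite{FS},\cite{SS}) to acquire full rational $2$-torsion, using that $E'$ has potentially multiplicative reduction somewhere forcing the relevant Galois action on $E'[2]$ to be trivial. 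The main obstacle, I expect, is the irreducibility of $\bar\rho_{E,p}$ uniformly in the solution: this is the step that genuinely requires the structure of the generalized Fermat equation (the role of the coefficients $A,B,C$, the set $U$, and the $S$-unit equation hypothesis in the Main Theorem), and where the constant $B_K$ really gets pinned down; everything else is bookkeeping with the cited conjectures and lemmas.
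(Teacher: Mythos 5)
Your proposal follows essentially the same route as the paper: establish irreducibility of $\bar\rho_{E,p}$ via the potentially multiplicative reduction at some $\frakP\in U$ (the paper simply imports Proposition~6.1 of \cite{SS} rather than re-deriving it), apply Conjecture~\ref{conj1} and the lifting proposition of \cite{SS} to get a complex newform $\frakf$ at a level drawn from a finite set, apply Conjecture~\ref{conj2}, rule out the fake elliptic curve alternative using $p\mid\#\bar\rho_{E,p}(I_\frakP)$ versus the $24$ bound, and then read off (i), (iii), (iv) from the matching inertia images. The one point where your sketch is off track is the full $2$-torsion step (ii): a quadratic twist does \emph{not} preserve $\bar\rho_{E',p}$ (it multiplies it by a quadratic character), so twisting is not an available move here, and ``potentially multiplicative reduction forcing trivial Galois action on $E'[2]$'' is not what happens either --- potentially multiplicative reduction at one place only gives you a rational $2$-torsion point under favorable conditions, not full $2$-torsion. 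The paper instead follows \cite{SS} (Lemmas~7.4 and~7.5 there): for $p$ large enough the newform has rational Hecke field and $E'$ acquires at least one rational $2$-torsion point, and then one replaces $E'$ by a $2$-isogenous curve $E''$ which does have full $2$-torsion; since the isogeny degree is coprime to $p$, $E'[p]\cong E''[p]$ as Galois modules, preserving (iii). That is the correct mechanism, and your argument needs it to be stated precisely rather than as a twist.
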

We will give the proof of this Theorem in Section \ref{appcon} after stating the necessary lemmas. 
\subsection{Surjectivity of Galois representations}

The following is Proposition 6.1 from \c{S}eng\"{u}n and Siksek \cite{SS}. We include its statement for the convenience of the  
reader but we will omit its proof and refer to \cite{SS} instead.
\begin{proposition}
Let $L$ be a Galois number field and let $\frakq$ be a prime of $L$. There is a constant $B_{L,\frakq}$ such that 
the following is true. Let $p > B_{L, \frakq}$ be  a rational prime. Let $E/L$ be an elliptic curve that is semistable 
at all $\frakp | p$ and having potentially multiplicative reduction at $\frakq$. Then $\bar{\rho}_{E,p}$ is irreducible. 

\end{proposition}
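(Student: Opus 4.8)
The strategy is a proof by contradiction, in the spirit of Freitas--Siksek \cite{FS} and \c{S}eng\"{u}n--Siksek \cite{SS}: from reducibility of $\overline{\rho}_{E,p}$ I would extract a bound on $p$ depending only on $L$ and $\frakq$, so that taking $B_{L,\frakq}$ larger than that bound forces irreducibility. Suppose then that $\overline{\rho}_{E,p}$ is reducible. Then $E[p]$ contains a $G_L$-stable line on which $G_L$ acts through a character $\theta\colon G_L\to\F_p^{\times}$, and it acts on the quotient $E[p]/(\text{line})$ through $\theta'=\chi_p\theta^{-1}$, where $\chi_p$ is the mod $p$ cyclotomic character; the relation $\theta\theta'=\chi_p$ comes from the Weil pairing. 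A first reduction uses uniform boundedness of torsion: if $\theta$ has order dividing an integer $c$, then the stable line consists of $L'$-rational points of order $p$ on $E$ for an extension $L'/L$ with $[L':L]\mid c$; and if $\theta'$ has order dividing $c$, then applying the dual isogeny to $E\to E/(\text{line})$ produces an $L'$-rational point of order $p$ on $E/(\text{line})$ for such an $L'$. In either case Merel's uniform boundedness theorem bounds $p$ in terms of $c\,[L:\Q]$. So it suffices to show that, for $p$ large in terms of $L$ and $\frakq$, one of $\theta,\theta'$ has order bounded in terms of $L$, or a contradiction arises directly.

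Next I would pin down $\theta$ at the primes above $p$. Semistability leaves each $\frakp\mid p$ with good or multiplicative reduction; good \emph{supersingular} reduction is impossible, since there $\overline{\rho}_{E,p}$ restricted to a decomposition group is irreducible, contradicting the stable line. In the remaining cases, the standard description of $E[p]$ as a $G_{L_\frakp}$-module --- through the connected--\'etale sequence, or the Tate curve, i.e.\ the facts underlying Lemma~\ref{sizeofinertia} (see also \cite{Serre}) --- gives $\theta|_{I_\frakp}\in\{\chi_p|_{I_\frakp},\,\mathbf{1}\}$ for every $\frakp\mid p$. At the finitely many bad primes $\frakq_0\nmid p$, Lemma~\ref{sizeofinertia} shows that $\overline{\rho}_{E,p}(I_{\frakq_0})$ has order dividing $24$ (potentially good reduction) or is unipotent or of order at most $2$ (potentially multiplicative reduction), so $\theta^{24}|_{I_{\frakq_0}}=\mathbf{1}$ in every case. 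Hence $\theta^{24}$ is unramified outside $p$ and restricts to a power of $\chi_p$ on each inertia group above $p$. This is precisely the input for the bounds on isogeny characters of David, as packaged for number fields in \cite{FS} and \cite{SS}: there is a constant $C_L$ depending only on $L$ --- and here the hypothesis that $L$ is Galois is used --- such that for $p>C_L$, either one of $\theta,\theta'$ has order bounded in terms of $L$ (in which case we conclude via the Merel reduction above), or $\theta^{12}=\chi_p^{6}$.

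It remains to eliminate the case $\theta^{12}=\chi_p^{6}$, and this is where potential multiplicativity at $\frakq$ is used. Enlarging $B_{L,\frakq}$ beyond the rational prime below $\frakq$, we may assume $\frakq\nmid p$. Since $E$ has potentially multiplicative reduction at $\frakq$, a quadratic twist of it is a Tate curve over $L_\frakq$, so $\overline{\rho}_{E,p}|_{D_\frakq}$ is, up to a tame quadratic twist, an extension of the trivial character by $\chi_p$; as $\chi_p$ is unramified at $\frakq$ this forces $\theta(\Frob_\frakq)\in\{\pm\nr(\frakq)\}$ when $p\nmid v_\frakq(j_E)$, and $\theta(\Frob_\frakq)\in\{\pm\nr(\frakq),\pm1\}$ in general, so $\theta^{12}(\Frob_\frakq)\in\{\nr(\frakq)^{12},\,1\}$. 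Comparing with $\chi_p^{6}(\Frob_\frakq)=\nr(\frakq)^{6}$, both possibilities give $\nr(\frakq)^{6}\equiv1\pmod p$, hence $p\mid\nr(\frakq)^{6}-1$, a fixed positive integer.

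Putting the pieces together, take $B_{L,\frakq}$ to be the largest of $C_L$, the Merel bound for the relevant bounded degree over $\Q$, $\nr(\frakq)^{6}-1$, and the rational prime below $\frakq$; then for $p>B_{L,\frakq}$ every branch of the argument is contradictory, so $\overline{\rho}_{E,p}$ is irreducible. The one genuinely deep ingredient --- the step I would cite rather than reprove --- is the bound on isogeny characters used in the second paragraph; it rests on Mazur's study of $X_0(p)$, its refinements by Momose and David, and Merel's theorem, and it is where the dependence ``$p$ large in terms of $L$'' actually originates. Everything else is local analysis at $p$, at the bad primes, and at $\frakq$, of the same kind carried out for the Frey curve in the preceding section.
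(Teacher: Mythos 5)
A preliminary remark: the paper does not prove this proposition --- it states it as Proposition~6.1 of \cite{SS} and explicitly defers the proof there --- so the comparison below is with the argument of \cite{SS}, which itself rests on the irreducibility criteria of Freitas and Siksek (\emph{Criteria for irreducibility of mod $p$ representations of Frey curves}, J.~Number Theory 153 (2015), not in the present bibliography). Your overall contour is the right one: from reducibility get isogeny characters $\theta,\theta'$ with $\theta\theta'=\chi_p$, reduce the bounded-order case to Merel, use semistability at $p$ to show $\theta|_{I_\mathfrak{p}}=\chi_p^{s_\mathfrak{p}}$ with $s_\mathfrak{p}\in\{0,1\}$ for each $\mathfrak{p}\mid p$, use the analysis at the other bad primes to make $\theta^{12}$ unramified away from $p$, and use the prime $\mathfrak{q}$ of potentially multiplicative reduction to produce a congruence bounding $p$.

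The gap is in the dichotomy of your second paragraph, ``either one of $\theta,\theta'$ has bounded order, or $\theta^{12}=\chi_p^6$.'' Under your hypotheses the second alternative is vacuous: you have just established $\theta^{12}|_{I_\mathfrak{p}}=\chi_p^{12 s_\mathfrak{p}}$ with $s_\mathfrak{p}\in\{0,1\}$, and for $p$ unramified in $L$ (hence for all large $p$) the character $\chi_p|_{I_\mathfrak{p}}$ has exact order $p-1$, so $\theta^{12}|_{I_\mathfrak{p}}=\chi_p^{6}|_{I_\mathfrak{p}}$ would force $p-1\mid 6$. The $\chi_p^6$ phenomenon in the classical isogeny-character results belongs to the additive case at $p$, which semistability at $p$ excludes. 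What your case split therefore omits is the genuinely hard \emph{mixed-signature} case, in which the exponents $s_\mathfrak{p}$ are not all equal as $\mathfrak{p}$ runs over the primes above $p$. There neither $\theta$ nor $\theta'$ has bounded order (each ramifies at some $\mathfrak{p}\mid p$, hence has order at least $p-1$), and $\theta^{12}$ is not a global power of $\chi_p$, so neither branch of your dichotomy applies. Over $\mathbb{Q}$ there is a single prime above $p$ and the issue never arises, which is perhaps why it is easy to miss; but over $L$ this is exactly the case that requires both the Galois hypothesis and the prime $\mathfrak{q}$. The Freitas--Siksek argument evaluates $\theta^{12h}(\Frob_\mathfrak{q})$ ($h$ the class number) via class field theory as a signature-weighted product of the reductions, at the primes above $p$, of a generator of $\mathfrak{q}^h$, and compares this with the local constraint $\theta^{12h}(\Frob_\mathfrak{q})\in\{1,\nr(\mathfrak{q})^{12h}\}$ that you correctly derived; the Galois hypothesis is what keeps this product and its Galois conjugates under control. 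Your closing computation is aimed at the empty $\chi_p^6$ branch rather than at this one, so the proof does not close as written. A smaller point: the reference to David's bounds is off --- David's results concern CM curves --- and the right attribution for this step is the Freitas--Siksek paper cited above, on which \cite{SS} builds.
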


By applying the above proposition to the Frey curve, we get the following corollary.

\begin{cor}\label{galrepsur}
 Let $K$ be a number field, and suppose that $U\neq\emptyset$.  There is a constant $C_K$ such that if $p>C_K$ and 
 $(a,b,c)\in \calO_{K}^3$ is a non-trivial solution to the Fermat equation with exponent $p$, and scaled so that
 $\calG_{a,b,c}\in \calH$, then  $\bar{\rho}_{E,p}$ is surjective, where $E$ is the Frey curve given in (\ref{Frey}).
 
\end{cor}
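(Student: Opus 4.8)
The plan is to show that the image $G:=\overline{\rho}_{E,p}(G_K)\subseteq\GL_2(\F_p)$ equals all of $\GL_2(\F_p)$ by establishing, for $p$ large, three facts: that $G$ acts irreducibly on $\F_p^2$, that $p\mid\#G$, and that $\det\circ\overline{\rho}_{E,p}$ is surjective onto $\F_p^\times$. Dickson's classification of subgroups of $\GL_2(\F_p)$ then forces $G\supseteq\SL_2(\F_p)$, and combined with the surjective determinant this gives $G=\GL_2(\F_p)$. First I would fix, once and for all and depending only on $K$: a prime $\frakP_0\in U$ (possible since $U\neq\emptyset$ by hypothesis), the Galois closure $L$ of $K/\Q$, and a prime $\frakq$ of $L$ lying above $\frakP_0$. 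Letting $B_{L,\frakq}$ be the constant of Proposition 6.1 of \cite{SS}, I would set
\[
 C_K:=\max\Bigl\{\,B_{L,\frakq},\;4v_{\frakP_0}(2),\;5,\;\max_{\frakP\in S\cup\calH}\nr(\frakP),\;d_K\,\Bigr\},
\]
where $d_K$ is any bound beyond which $p$ is unramified in $K$ (e.g.\ $|\disc K|$); every quantity on the right depends only on $K$. Now take $p>C_K$ and a non-trivial solution $(a,b,c)$ scaled so that $\calG_{a,b,c}=\frakm\in\calH$, with $E$ the Frey curve \eqref{Frey}.

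Because $p$ exceeds $\nr(\frakP)$ for every $\frakP\in S\cup\calH$, no prime of $K$ above $p$ lies in $S\cup\{\frakm\}$, so by Lemma \ref{Freyred} $E$ is semistable at all primes above $p$; semistable reduction is preserved under base change, so the same holds for $E$ over $L$. Since $p>4v_{\frakP_0}(2)$, Lemma \ref{Udiv}(i) shows $E$ has potentially multiplicative reduction at $\frakP_0$, i.e.\ $v_{\frakP_0}(j)<0$, and as $v_{\frakq}(j)=e(\frakq/\frakP_0)\,v_{\frakP_0}(j)<0$ the curve $E$ over $L$ has potentially multiplicative reduction at $\frakq$. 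Thus Proposition 6.1 of \cite{SS} applies (using $p>B_{L,\frakq}$) and shows $\overline{\rho}_{E,p}|_{G_L}$ is irreducible; since $G_L\le G_K$, a fortiori $G$ acts irreducibly on $\F_p^2$. Next, Lemma \ref{Udiv}(ii) gives $p\mid\#\overline{\rho}_{E,p}(I_{\frakP_0})$, hence $p\mid\#G$; and by Lemma \ref{sizeofinertia}(ii) the image contains an element of exact order $p$. Now invoke Dickson: a subgroup of $\GL_2(\F_p)$ of order divisible by $p$ is either contained in a Borel subgroup or contains $\SL_2(\F_p)$ — indeed the normalizers of a split, resp.\ non-split, Cartan subgroup have orders $2(p-1)^2$, resp.\ $2(p^2-1)$, both prime to $p$, and a subgroup with projective image $A_4$, $S_4$ or $A_5$ also has order prime to $p$ for $p\ge 5$ (the only borderline case being $p=5$, where $A_5\cong\mathrm{PSL}_2(\F_5)$ already lifts to a subgroup of $\GL_2(\F_5)$ containing $\SL_2(\F_5)$). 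As $G$ is irreducible it is not contained in a Borel, so $G\supseteq\SL_2(\F_p)$. Finally, by Corollary \ref{cor:finite}(\ref{odd}) the determinant of $\overline{\rho}_{E,p}$ is the mod $p$ cyclotomic character, and since $p$ is unramified in $K$ we have $K\cap\Q(\zeta_p)=\Q$, so this character is surjective onto $\F_p^\times$. Hence $G=\GL_2(\F_p)$, as desired.

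The only delicate point — the ``main obstacle,'' such as it is — is the bookkeeping that guarantees $C_K$ depends on $K$ alone: this is why the prime $\frakP_0\in U$, the Galois closure $L$, and the prime $\frakq\mid\frakP_0$ of $L$ must be chosen in advance purely in terms of $K$, and why one must check that the reduction hypotheses of Proposition 6.1 of \cite{SS} (semistability above $p$, potentially multiplicative reduction at $\frakq$) genuinely survive the base change from $K$ to $L$. The group-theoretic input (Dickson) and the determinant computation are entirely standard and present no difficulty.
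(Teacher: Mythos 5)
Your proof is correct and follows essentially the same route as the paper: fix $\frakP\in U$, pass to the Galois closure $L$ and a prime $\frakq\mid\frakP$, invoke Proposition 6.1 of \cite{SS} for irreducibility, use Lemma \ref{Udiv}(ii) to get an element of order $p$ in the image, apply the classification of subgroups of $\GL_2(\F_p)$, and finish with the cyclotomic-determinant argument. You are somewhat more careful than the paper in two places — explicitly arranging that no prime of $L$ above $p$ lies over $S\cup\calH$ so that the semistability hypothesis of Proposition 6.1 is verified after base change, and making the unramifiedness condition $K\cap\Q(\zeta_p)=\Q$ concrete via $p>|\disc K|$ — but these are refinements of the same argument rather than a different approach.
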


\begin{proof}
 Suppose $\frakP\in U$, then $E$ has potentially multiplicative reduction at $\frakP$ by Lemma 4.2.  Also, $E$ is semistable
 away from the primes in $S\cup\calH$ from Lemma 4.4.  Let $L$ be the Galois closure of $K$, and let $\frakq$ be a prime
 of $L$ above $\frakP$.  Now, by applying Proposition 5.1, we get a constant $B_{L,\frakq}$ such that 
 $\bar{\rho}_{E,p}(G_L)$ is irreducible whenever $p>B_{L,\frakq}$.  Now, we observe that $B_{L,\frakq}$ depends on only
 $K$ since $\frakp$ is a prime of $L$ above $\frakP$ which is a prime of $K$ above 2.  We denote $B_{L,\frakq}$ by $C_K$.
 If necessary, enlarge $C_{K}$ so that $C_{K}>4v_{\frakP}(2)$.  Now, we apply part (ii) of Lemma 4.2 and see that
 the image of $\bar{\rho}_{E,p}$ contains an element of order $p$.  It is known that any subgroup of $\GL_2(\F_p)$ 
 having an element of order $p$ is either reducible or contains $\SL_2(\F_p)$.  As $p>C_K>4v_{\frakP}(2)$, the image
 contains $\SL_2(\F_p)$.  Finally, we can ssume that $K\cap\Q(\zeta_p)=\Q$ by taking $C_K$ large enough if needed.
 Hence, $\chi_p=\det(\bar{\rho})_{E,p}$ is surjective.
\end{proof}

\label{appcon} \subsection{Proof of Theorem \ref{thm:levelred}} 

In this section, Theorem \ref{thm:levelred} will be proven. We continue with the notations introduced in the statement of
Theorem \ref{thm:levelred} and the assumptions of the theorem.

 \begin{lemma}\label{reduced}
 There is a non-trivial, new (weight 2) complex eigenform $\frakf$ which has an associated elliptic curve 
  $E_{\frakf}/K$ of conductor $\frakN'$ dividing $\frakN$.
 \end{lemma}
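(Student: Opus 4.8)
The plan is to combine the level-lowering machinery of Conjecture~\ref{conj1} with the modularity lifting of Conjecture~\ref{conj2}, applied to the Frey curve $E$ attached to a putative solution $(a,b,c)$ with $p$ large. First I would invoke Corollary~\ref{galrepsur}: for $p$ exceeding the constant $C_K$, the mod $p$ representation $\bar\rho_{E,p}\colon G_K\to\GL_2(\F_p)$ is surjective, hence in particular absolutely irreducible, and by Corollary~\ref{cor:finite} it is odd, has cyclotomic determinant, is finite flat at all primes above $p$, and has Serre conductor $\frakN$ supported on $S\cup\{\frakm\}$ and lying in a finite set depending only on $K$. Here I should also use Lemma~\ref{Udiv}(ii): choosing $\frakP\in U$ (which is nonempty by hypothesis) and taking $p>4v_{\frakP}(2)$, the image $\bar\rho_{E,p}(I_{\frakP})$ has order divisible by $p$, so $\frakP$ genuinely ramifies; combined with Lemma~\ref{lem:24} at $\frakm$ this pins down the ramification behaviour that controls $\frakN$.

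Next I would apply Conjecture~\ref{conj1} to $\bar\rho_{E,p}$: there is a weight~2 mod~$p$ eigenform $\theta$ over $K$ of level $\frakN$ with $\tr\bar\rho_{E,p}(\Frob_{\frakq})=\theta(T_{\frakq})$ for all $\frakq\nmid p\frakN$. Since $\frakN$ ranges over a finite set depending only on $K$, Proposition~2.1 of \cite{SS} supplies an integer $B$, depending only on that finite set and hence only on $K$, such that for $p>B$ every such mod~$p$ eigenform of level $\frakN$ lifts to a \emph{complex} eigenform $\frakf$ of the same level. Absorbing $B$, $C_K$ and the bound $4v_{\frakP}(2)$ into a single constant $B_K$ depending only on $K$, we obtain for $p>B_K$ a complex eigenform $\frakf$ of level $\frakN$ with $\overline{\frakf(T_{\frakq})}\equiv\theta(T_{\frakq})\equiv\tr\bar\rho_{E,p}(\Frob_{\frakq})\pmod{\frakp}$ for a prime $\frakp\mid p$ of $\Q_{\frakf}$.

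Then I would pass from $\frakf$ at level $\frakN$ to a \emph{new} eigenform: by the usual newform decomposition, $\frakf$ arises from a newform $\frakf'$ of some level $\frakN'\mid\frakN$, and since $\bar\rho_{E,p}$ is (absolutely) irreducible and nontrivial, $\frakf'$ is nontrivial. Finally, apply Conjecture~\ref{conj2} to $\frakf'$: if $K$ has a real place this directly yields an elliptic curve $E_{\frakf}/K$ of conductor $\frakN'$; if $K$ is totally complex it yields either such an $E_{\frakf}$ or a fake elliptic curve $A_{\frakf}$ of conductor $(\frakN')^2$. To rule out the fake elliptic curve in all cases relevant to this lemma — or, if one cannot rule it out abstractly here, to package the conclusion so the proof proceeds — I would note that the statement only claims existence of a genuine $E_{\frakf}$; the dichotomy is handled either by the hypotheses already forcing a real place, or by deferring the fake-curve case to a separate argument (as in \cite{SS}). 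I expect the main obstacle to be exactly this last point: organizing the bookkeeping so that the constant $B_K$ genuinely depends only on $K$ (not on $A,B,C$ or the solution), which requires the finiteness of the set of possible Serre conductors from Corollary~\ref{cor:finite}(\ref{Serrecon}), and cleanly handling the fake-elliptic-curve alternative so that one still extracts an honest elliptic curve $E_{\frakf}/K$.
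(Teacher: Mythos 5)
Your overall architecture tracks the paper's proof closely: surjectivity from Corollary~\ref{galrepsur}, finiteness of Serre conductors from Corollary~\ref{cor:finite}, Conjecture~\ref{conj1} to produce a mod~$p$ eigenform $\theta$ of level $\frakN$, the lifting proposition of \cite{SS} to obtain a complex eigenform, passage to a new eigenform of level $\frakN'\mid\frakN$, and then Conjecture~\ref{conj2}. But you explicitly punt on the step that this lemma actually needs to deliver: ruling out the fake elliptic curve alternative. Since the paper's target fields are imaginary quadratic, $K$ is totally complex here, so the real-place escape hatch is unavailable, and Conjecture~\ref{conj2} genuinely offers a fake elliptic curve $A_\frakf$ as an alternative. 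The lemma claims an honest $E_\frakf/K$, so the fake case must be eliminated \emph{within} this proof. The paper does this via Lemma~\ref{fake}: for $\frakP\in U$, Lemma~\ref{Udiv}(ii) gives $p\mid\#\bar\rho_{E,p}(I_\frakP)$, whereas Theorem~4.2 of \cite{SS} bounds $\#\bar\rho_{A_\frakf,p}(I_\frakP)\leq 24$; since $\bar\rho_{E,p}\sim\bar\rho_{A_\frakf,p}$, this is contradictory for $p>24$. Without that argument (or a substitute) you have not proved the lemma.

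A second, smaller gap: before Conjecture~\ref{conj2} can produce an elliptic curve, you must first reduce to $\Q_\frakf=\Q$ (elliptic curves have rational Hecke eigenvalues). The paper handles this by citing Lemma~7.2 of \cite{SS}: if $\Q_\frakf\neq\Q$ then $p$ is bounded by a constant depending only on $\frakf$, and since only finitely many $\frakf$ occur (they depend only on $K$ through the finite set of levels), for large $p$ one may assume $\Q_\frakf=\Q$. You omit this. Finally, a minor misattribution: Lemma~\ref{lem:24} (behaviour at $\frakm$) plays no role in this lemma's proof; it is used only later, in the proof of Theorem~\ref{thm:levelred}.
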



 \begin{proof}
 
 To start with, we show the existence of such an eigenform $\frakf$ of level $\frakN$ supported only on 
  $S\cup\{\frakm\}$.
  
   By Corollary \ref{galrepsur}, the representation $\bar{\rho}_{E,p}:G_K\rightarrow GL_{2}(\mathbb{F}_p) $ is surjective 
   hence is absolutely irreducible for $p>C_K$.  Now, we apply  Conjecture \ref{conj1} to deduce that there is 
   a weight 2 $\mod p$ eigenform $\theta$ 
   over $K$ of level $\frakN$ such that for all primes $\frakq$ coprime to $p\frakN$, we have 
   \[\tr(\bar{\rho}_{E,p}(\Frob_{\frakq}))=\theta(T_{\frakq}).\]
 We know from Corollary \ref{cor:finite} that there are only finitely many possible levels $\frakN$.  Thus, by taking $p$
 large enough,
 there is a weight 2 complex eigenform $\frakf$ with level $\frakN$ which is a lift of $\theta$ for any level $\frakN$.
 Note that since there are only finitely many such eigenforms $\frakf$
  and they depend only on $K$, from now on we can suppose that every constant depending on these eigenforms depends
 only on $K$.
  
   Next, we recall that if $\Q_{\frakf}=\Q$ then  there is a constant $C_{\frakf}$ depending only on $\frakf$ such that 
   $p<C_{\frakf}$ (\cite{SS}, Lemma 7.2).  Therefore, by taking $p$ sufficiently large, we assume that $\Q_{\frakf}=\Q$. 
   In order to apply Conjecture \ref{conj2}, we need to
  show that $\frakf$ is non-trivial and new.  As $\bar{\rho}_{E,p}$ is irreducible, the eigenform $\frakf$ is non-trivial.  
  If $\frakf$ is new, we are done.  If not, we can replace it with an equivalent new eigenform of smaller level.  Therefore,
  we can take $\frakf$ new with level $\frakN'$ dividing $\frakN$.
  Finally, we apply Conjecture \ref{conj2} and obtain that $\frakf$ either has an associated elliptic curve $E_{\frakf}/K$ 
  of conductor $\frakN'$, or has an associated fake elliptic curve $A_{\frakf}/K$ of conductor $\frakN^2$.
  
  By Lemma \ref{fake},  if $p>24$, then $\frakf$ has an associated elliptic curve $E_{\frakf}$.

    As a result, we can assume that $\bar{\rho}_{E,p}\sim \bar{\rho}_{E',p}$ where $E'=E_{\frakf}$ is an elliptic
    curve with conductor $\frakN'$ dividing $\frakN$.  This completes the proof of the claim.

      \end{proof}

      \begin{lemma}\label{fake}
       If $p>24$, then $\frakf$ has an associated elliptic curve $E_{\frakf}$.
      \end{lemma}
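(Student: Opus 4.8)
The plan is to rule out the fake-elliptic-curve alternative in Conjecture \ref{conj2} by exploiting the ramification of the mod $p$ representation at a prime $\frakP \in U$, which was analysed in Lemma \ref{Udiv}. Recall that $\bar\rho_{E,p} \sim \bar\rho_{A_\frakf,p}$ (the mod $p$ representation on the $p$-torsion of the fake elliptic curve, which is $2$-dimensional over $\F_p$ after choosing an isomorphism of the quaternionic endomorphism algebra with a matrix algebra), and that by Lemma \ref{Udiv}(ii) the image $\bar\rho_{E,p}(I_\frakP)$ has order divisible by $p$, since $E$ has potentially multiplicative reduction at $\frakP$ with $p \nmid v_\frakP(j)$. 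So I would first record that, taking $p > 4 v_\frakP(2)$ (absorbed into $B_K$), the representation $\bar\rho_{A_\frakf,p}$ is ramified at $\frakP$ with inertia image of order $p$ or $2p$.

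Next I would turn to the local behaviour of $A_\frakf$ at $\frakP$. A fake elliptic curve is an abelian surface with quaternionic multiplication, i.e.\ a QM abelian surface, and its conductor is $\frakN^2$; in particular $\frakP \nmid \frakN$ (since $\frakN$ is supported on $S \cup \{\frakm\}$ and $\frakP \in U \subseteq T$ divides $2$, but $2R$-primes are exactly the primes of $S$ — wait, here one must be slightly careful: $\frakP \in U$ does divide $2$, hence $\frakP \in S$, so it may divide $\frakN$). The correct statement is that $A_\frakf$ has good reduction away from the support of $\frakN^2$, and at a prime of good reduction the associated representation is unramified; at a prime dividing $\frakN$ one controls the size of the inertia image by the theory of the Néron model of a QM abelian surface. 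The key point I want is: for a fake elliptic curve with $p$ a large prime, the image of inertia $\bar\rho_{A_\frakf,p}(I_\frakP)$ is \emph{either} trivial/small (potentially good reduction) \emph{or}, in the potentially multiplicative case, constrained in a way incompatible with what Lemma \ref{Udiv} forces on $\bar\rho_{E,p}(I_\frakP)$, unless $p \le 24$. Concretely, the relevant dichotomy mirrors Lemma \ref{sizeofinertia}: at a prime $\frakP \nmid p$, a QM abelian surface with potentially good reduction has $\bar\rho_{A_\frakf,p}(I_\frakP)$ of order dividing $24$; and because $\frakf$ gives a representation with $\Q_\frakf = \Q$ whose Galois image lies in a proper (normalizer-of-Cartan type / finite) subgroup coming from the quaternion algebra, the potentially multiplicative possibility for a genuine fake elliptic curve is excluded or again yields inertia of order dividing $24$. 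Thus $p \mid \#\bar\rho_{E,p}(I_\frakP) = \#\bar\rho_{A_\frakf,p}(I_\frakP) \mid 24$ forces $p \le 24$, a contradiction once $p > 24$.

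Concretely the steps, in order, are: (1) invoke Lemma \ref{Udiv} to get that $\bar\rho_{E,p}(I_\frakP)$ has order divisible by $p$; (2) suppose for contradiction that Conjecture \ref{conj2} produces a fake elliptic curve $A_\frakf/K$ of conductor $\frakN^2$ with $\bar\rho_{A_\frakf,p} \sim \bar\rho_{E,p}$; (3) analyse the restriction to inertia at $\frakP$: since $A_\frakf$ has potentially good reduction at $\frakP$ (a QM abelian surface never acquires multiplicative reduction — the endomorphism algebra is a division algebra, so it is never totally degenerate, which is precisely the structural fact distinguishing fake elliptic curves here), the image $\bar\rho_{A_\frakf,p}(I_\frakP)$ has order dividing a universal bound, $24$; (4) combine (1) and (3): $p \mid \#\bar\rho_{E,p}(I_\frakP) = \#\bar\rho_{A_\frakf,p}(I_\frakP)$, and this divides $24$, so $p \le 24$, contradicting $p > 24$. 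Therefore the fake elliptic curve cannot occur and $\frakf$ has an associated genuine elliptic curve $E_\frakf$.

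The main obstacle I anticipate is step (3): pinning down exactly why a fake elliptic curve has potentially good reduction at every prime not dividing $p$ — equivalently, that a QM abelian surface is never potentially totally multiplicative — and extracting the sharp constant $24$ for the inertia image from the semistable reduction theory (Grothendieck's $\ell$-adic criterion, applied to the $2$-dimensional quotient cut out by the quaternionic action, together with the classification of finite subgroups of the relevant linear group, as in Lemma \ref{sizeofinertia}(i)). This is exactly the argument used in \c{S}eng\"un--Siksek \cite{SS}; I would cite their treatment for the reduction-theoretic input and only carry out the short inertia-order comparison here.
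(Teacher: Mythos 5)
Your proposal is correct and follows essentially the same route as the paper: take $\frakP\in U$, use Lemma \ref{Udiv}(ii) to get $p\mid\#\bar\rho_{E,p}(I_\frakP)$, invoke the \c Seng\"un--Siksek bound of $24$ on $\#\bar\rho_{A_\frakf,p}(I_\frakP)$ (Theorem 4.2 of \cite{SS}), and derive a contradiction from $\bar\rho_{E,p}\sim\bar\rho_{A_\frakf,p}$ once $p>24$. The paper's proof is exactly this and simply cites \cite{SS} for the $24$ bound, which is what you said you would do; your additional commentary on the potentially-good-reduction structure of QM abelian surfaces is background that the paper leaves implicit in that citation.
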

      \begin{proof}  
      
      Let $\frakP\in U$ as we assume $U\neq\emptyset$.  By Lemma 4.2, $p\;|\;\#\bar{\rho}_{E,p}(I_{\frakP})$
       whenever $p>4v_{\frakP}(2)$.  Assume for a contradiction that $\frakf$ corresponds to a fake elliptic curve $A_{\frakf}$, then
       $\#\bar{\rho}_{A_{\frakf},p}(I_{\frakP})\leq 24$ by Theorem 4.2 of \cite{SS}.  (Note that here, $\bar{\rho}_{A_{\frakf},p}$
       is the 2-dimensional representation defined in Section 4, Equation 3 of \cite{SS}).  Thus 
       $\#\bar{\rho}_{E,p}(I_{\frakP})\leq 24$ due to
       $\bar{\rho}_{A_{\frakf},p}\sim \bar{\rho}_{E,p}$ leading to a contradiction when $p>24$.
      \end{proof}

      We can now give the proof of Theorem \ref{thm:levelred}.

 \begin{proof3}
Lemma \ref{fake} gives us that if $p>24$, then $\frakf$ has an associated elliptic curve $E_{\frakf}$.
  Therefore, by Lemma \ref{reduced} we can assume that $\bar{\rho}_{E,p}\sim\bar{\rho}_{E',p}$ where $E'=E_{\frakf}$ is an
  elliptic curve
  of conductor $\frakN'$ dividing $\frakN$. 

  The elliptic curve $E'$ does not need to have full 2-torsion.  We will refer to \cite{SS} to overcome this problem.
  If $E'$ does not have full 2-torsion, we can always find another elliptic curve $E''$ with full 2-torsion that is
  2-isogenous to $E'$ for big enough prime $p$, see Lemma 7.4, Lemma 7.5 in \cite{SS}.

 Now, suppose $E'$ is $2$-isogenous to an elliptic curve $E''$.  As the isogeny induces an isomorphism
 $E'[p]\cong E''[p]$ of Galois modules ($p\neq 2$), $\bar{\rho}_{E,p}\sim \bar{\rho}_{E',p}\sim\bar{\rho}_{E'',p}$ 
 completing the proof of (iii).  After possibly replacing $E'$ by $E''$, we can suppose that $E'$ has full 2-torsion 
 giving us (ii).
 
 It remains to prove (i) and (iv).  Let $\frakP\in U$, then $p$ divides the size of $\bar{\rho}_{E,p}(I_{\frakP})$ by Lemma
 \ref{Udiv}.  Now,
 Lemma \ref{sizeofinertia} implies that $v_{\frakP}(j')<0$ for $\frakP\in U$ since the sizes of  
 $\bar{\rho}_{E,p}(I_{\frakP})$ and $\bar{\rho}_{E',p}(I_{\frakP})$ are equal.  This
 proves (iv).  Finally, we need to show that $E'$ has potentially good reduction at $\frakm$.  Assume the contrary that
 $E'$ has a multiplicative reduction at $\frakm$.  Then, by Lemma \ref{sizeofinertia}, we have
 $p\;|\;\#\bar{\rho}_{E',p}(I_{\frakm})$ for
 every $p>|v_{\frakm}(j')|$.  On the other hand, the size of $\bar{\rho}_{E',p}(I_{\frakm})$ is less than $24$ by 
 Lemma \ref{lem:24} giving a contradiction
 for large $p$.
 
  \end{proof3}

\section{Relation with S-unit equation and elliptic curves}
So far, we have proved that assuming a non-trivial solution to the generalized Fermat equation yields an elliptic curve
$E'$ with full 2-torsion having potentially good reduction away from the set $S$.  In order to prove our Main Theorem,
we relate such elliptic curves $E'$ to the solutions of the $S$-unit equation \ref{uniteqn} via $\lambda$-invariants of 
elliptic curves (see, e.g., \cite{Sil86}, pp. 53-55).  For more details, we refer the reader to the Section 5 of \cite{HD}.

Let $E'$ bea an elliptic curve over $K$ with full 2-torsion which has a model
\begin{equation}\label{2torcurve}
E` : Y^2=(X-e_1)(X-e_2)(X-e_3),
\end{equation} 
where $e_1, e_2, e_3$ are all distinct.  Then their cross ratio
$ \lambda=\frac{e_3-e_1}{e_2-e_1} $ is an element of $\P^1(K)-\{0,1,\infty\}.$ Conversely we can obtain any $\lambda$ in $\P^1(K)-\{0,1,\infty\}$ as a
cross ratio of three distinct elements of $K$.  Now, let us denote the symmetric group on three letters by $\frakS_3$.
The action of $\frakS_3$ on the set $\{e_1,e_2,e_3\}$ can be extented to a well-defined action on $\P^1(K)-\{0,1,\infty\}$
via the cross ratio.  Hence, we can view $\frakS_3$ as the below subgroup of $\PGL_2(K)$:
\[
 \frakS_3=\{z,1/z,1-z,1/(1-z),z/z-1,(z-1)/z\}.
\]

Under the action of $\frakS_3$, the orbit of $\lambda$ in $\P^1(K)-\{0,1,\infty\}$, called \textit{$\lambda$-invariants}
corresponds to the following set:
\begin{equation}\label{lambdainvariants}
 \left\{\lambda,\frac{1}{\lambda},1-\lambda,\frac{1}{1-\lambda},\frac{\lambda}{\lambda-1},\frac{\lambda-1}{\lambda}\right\}.
\end{equation}
Every elliptic curve $E'$ of the form (\ref{2torcurve}) is isomorphic (over $\bar{K}$) to an elliptic curve in the 
\textit{Legendre form} $E_{\lambda} : Y^2=X(X-1)(X-\lambda)$ for $\lambda\in\P^1(K)-\{0,1,\infty\}$ whose $j$-invariant 
is 
\begin{equation}\label{j-inv}
j(\lambda)=2^8\frac{(\lambda^2-\lambda+1)^3}{\lambda^2(1-\lambda)^2}.
\end{equation}
Moreover, there is a one-to-one correspondence between the $\bar{K}$-isomorphism classes of the Legendre elliptic curves 
and the $\frakS_3$-orbits of elements of $\lambda\in\P^1(K)-\{0,1,\infty\}$.

The following result is Lemma 7.1 of \cite{HD}, we state it here for the convenience of the reader. 
\begin{lemma}\label{unitsareintegral}
Suppose the size of $S$ is $2$. Let $(\lambda, \mu)  \in \Lambda_S$, where $\Lambda_S$ is the set of solutions to the 
$S$-unit equation \ref{uniteqn}. Then there is some element $\sigma \in \frakS_3$ such that
$(\lambda, \mu)^\sigma = (\lambda', \mu')$ satisfies $\lambda', \mu' \in \calO_K.$

\end{lemma}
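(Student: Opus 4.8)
The plan is to reduce the whole question to the behaviour of $\lambda$ and $\mu$ at the two primes of $S$, and then to conclude by a short pigeonhole argument. Since $\lambda,\mu\in\calO_{S}^{*}$, each of the six elements of the $\frakS_3$-orbit \eqref{lambdainvariants} of $\lambda$ is a ratio of products of $\lambda$, $\mu=1-\lambda$ and their inverses, hence is a unit — in particular integral — at every prime of $K$ outside $S$. Writing $S=\{\frakP_1,\frakP_2\}$, it therefore suffices to produce one $\sigma\in\frakS_3$ such that the transformed pair $(\lambda',\mu')=(\lambda,\mu)^{\sigma}$ has $v_{\frakP_i}(\lambda')\ge 0$ and $v_{\frakP_i}(\mu')\ge 0$ for $i=1,2$. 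First I would record the only constraint that is available: for any prime $\frakP$, putting $a=v_{\frakP}(\lambda)$ and $b=v_{\frakP}(\mu)$, the ultrametric inequality applied to $\lambda+\mu=1$ forces that either $a=b\le 0$, or else $a\neq b$ and $\min(a,b)=0$ (so that one of $a,b$ equals $0$ and the other is $\ge 0$).

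Next, I would fix a prime $\frakP$ and sort the six transforms of \eqref{lambdainvariants} into three \emph{types}: the pair $\{(\lambda,\mu),(\mu,\lambda)\}$, whose coordinate valuations at $\frakP$ are $\{a,b\}$; the pair $\{(1/\lambda,-\mu/\lambda),(-\mu/\lambda,1/\lambda)\}$, with valuations $\{-a,\,b-a\}$; and the pair $\{(1/\mu,-\lambda/\mu),(-\lambda/\mu,1/\mu)\}$, with valuations $\{-b,\,a-b\}$. Within a type the two representatives differ only by swapping the two coordinates, so they are simultaneously $\frakP$-integral or not. The key observation is that, in each of the two cases of the trichotomy above, at least two of the three types are $\frakP$-integral: if $a=b\le 0$ then the second and third types are integral (their valuations are $-a,\,0$ and $-b,\,0$); if $b=0\le a$ then the first type ($a,b\ge 0$) and the third type ($-b=0$, $a-b=a\ge 0$) are integral; and symmetrically the first and second types are integral when $a=0\le b$. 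So the set of ``good'' types at $\frakP$ is a subset of a three-element set having size at least two.

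Finally, since $|S|=2$, I obtain two subsets of a three-element set each of cardinality $\ge 2$; two such subsets necessarily meet, so they share a good type. Picking any $\sigma\in\frakS_3$ representing that common type, the pair $(\lambda,\mu)^{\sigma}$ is integral at $\frakP_1$ and at $\frakP_2$, and by the first paragraph it is integral at every other prime as well, so $\lambda',\mu'\in\calO_K$. The argument involves essentially no hard step; the only thing that requires care is the bookkeeping for the six transforms and checking the ``two out of three'' assertion in each case of the trichotomy. It is worth noting that the hypothesis $|S|=2$ is genuinely used here: for $|S|\ge 3$ one can have three primes whose good-type sets are $\{1,3\}$, $\{1,2\}$ and $\{2,3\}$, with empty common intersection, so this approach breaks down.
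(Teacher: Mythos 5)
Your proof is correct. Since the paper itself omits the proof (it simply cites Lemma~7.1 of \cite{HD}), there is no in-text argument to compare against, but your approach is the standard valuation-based one that appears in Freitas--Siksek and in Deconinck, repackaged in a particularly clean way. The key ingredients are all present and correct: the observation that $\frakS_3$-transforms of an $S$-unit pair remain $S$-units (hence integral outside $S$); the trichotomy for $(v_{\frakP}(\lambda),v_{\frakP}(\mu))$ coming from $\lambda+\mu=1$ and the ultrametric inequality; the grouping of the six transforms into three swap-classes whose coordinate valuations at a fixed $\frakP$ are $\{a,b\}$, $\{-a,b-a\}$, $\{-b,a-b\}$; the verification that in each branch of the trichotomy at least two of the three classes are $\frakP$-integral; and finally the pigeonhole step $2+2>3$ using $|S|=2$. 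One tiny imprecision: in the case $a\neq b$ with $\min(a,b)=0$, the other valuation is strictly positive, not merely $\geq 0$ as you wrote, but this does not affect the argument. Your closing remark about why $|S|=2$ is essential (three size-two subsets of a three-element set can have empty intersection) is a worthwhile observation that correctly pinpoints where the hypothesis enters.
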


\section{Proof of the Main Theorem}

In this section, we will prove our main theorem.  Our proof closely follows the proof of Theorem 3 in \cite{FS}.
Let $K$ be a number field
satisfying Conjectures \ref{conj1} and \ref{conj2} and $S,U,V$ be the sets in (\ref{sets}) with $U\neq \emptyset$.  Let $B_K$ be as
in Theorem \ref{thm:levelred}, and let $(a,b,c)$ be a non-trivial solution to the Fermat equation \ref{maineqn}, which is
scaled so that $\calG_{a,b,c}=\frakm$ where $\frakm\in \calH$, with exponent $B_K$.  We now apply Theorem \ref{thm:levelred}
and obtain an elliptic curve $E'/K$ having full 2-torsion and potentially good reduction away from $S$ with $j$-invariant
$j'$ satisfying $v_{\frakP}(j')<0$ for all $\frakP\in U$.

Let $\lambda$ be any of the $\lambda$-invariants of $E'$.  As $E'$ has potentially good reduction away from $S$, the
$j$-invariant $j'$
belongs to $\calO_S$ where $\calO_S$ is the ring of $S$-integers in $K$.  From the equation \ref{j-inv}, we can deduce
that $\lambda\in K$ satisfies a monic polynomial with coefficients in $\calO_S$ implying $\lambda\in \calO_S$.  On the
other hand, notice that $1/\lambda, \mu:=1-\lambda$ and $1/\mu$ are solutions of (\ref{j-inv}) and hence elements of
$\calO_S$.  Therefore, we conclude that $(\lambda,\mu)$ is a solution of the $S$-unit equation \ref{uniteqn}.

Now, by the assumption of our main theorem, there is some $\frakP\in U$ for every such solution $(\lambda,\mu)$ 
satisfying $\max\{|v_{\frakP}(\lambda)|,|v_{\frakP}(\mu)|\}\leq 4 v_{\frakP}(2)$.  
Let $t:=\max\{|v_{\frakP}(\lambda)|,|v_{\frakP}(\mu)|\}$ and further let us express $j'$ in terms of $\lambda$
and $\mu$ as:
\begin{equation}\label{newj-inv}
 j'=2^8\frac{(1-\lambda\mu)^3}{(\lambda\mu)^2}.
\end{equation}

Now, if $t=0$, then $v_{\frakP}(j')\geq 8v_{\frakP}(2)\geq0$ by (\ref{newj-inv}).  This contradicts with the 
assumption that $v_{\frakP}(j')<0$ for all $\frakP\in U$.  Hence, we may suppose $t>0$.  The relation
$\lambda+\mu=1$ leads to $v_{\frakP}(\lambda+\mu)\geq \min\{v_{\frakP}(\lambda),v_{\frakP}(\mu)\}$ with equality
if $v_{\frakP}(\lambda)\neq v_{\frakP}(\mu)$.  This shows either $v_{\frakP}(\lambda)=v_{\frakP}(\mu)=-t$, or
$v_{\frakP}(\lambda)=t$ and $v_{\frakP}(\mu)=0$, or $v_{\frakP}(\lambda)=0$ and $v_{\frakP}(\mu)=t$.  Therefore,
$v_{\frakP}(\lambda\mu)=-2t<0$ or $v_{\frakP}(\lambda\mu)=t>0$.  In any of the cases, we have
$v_{\frakP}(j')\geq 8v_{\frakP}(2)-2t\geq 0$, which again yields a contradiction.  This completes the proof of the 
main theorem.

\section{Proof of Density Theorem 1}

To prove the first density result we need to understand the solutions to the $S$-unit equation in the quadratic fields 
$K$ satisfying the assumptions of the theorem. Using the notation introduced in the beginning of the paper we 
have $S=\{\frakP, q \}$ and $T=\{\frakP\}$ where $\frakP^2=2\calO_K$. Note that $2$ is ramified in $K$ by assumption. 
First assume that $d > 2$.  Then the unit group of $\calO_K$ is $\{\pm 1\}$. Also the ideal $\frakP$ cannot be principal 
since the equation $a^2+db^2=2$ has no solutions in integers. 

We claim that if $(\lambda, \mu)$ is a solution to the $S$-unit equation $\lambda+ \mu =1$ then $(\lambda, \mu) $ is in 
the set $\{(-1,2), (1,-2), (1/2, 1/2)\}.$ Following \cite{FS}, these solutions will be called \emph{irrelevant solutions},
while other solutions will be called \emph{relavant solutions}. By Lemma \ref{unitsareintegral} there is an element 
$(\lambda', \mu') $ in the orbit of $(\lambda, \mu) $ such that $\lambda', \mu' \in \calO_K. $

In this section we use Section 7 of \cite{HD}, most of the results are analogous to results therein. For instance we only
give the statement of the following result since its proof is given in \cite{HD}. 

\begin{lemma}\label{rationalunits}
Let $K$ and $S$ be as in the statement of the Density Theorem 1 and let $(\lambda, \mu) \in \Lambda_S$. Then
$\lambda, \mu \in \mathbb Q$ if and only if $(\lambda, \mu )$ belongs to the $\frakS_3$-orbit 
$\{(-1,2), (1,-2), (1/2, 1/2)\}.$
\end{lemma}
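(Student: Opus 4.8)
The ``if'' direction is immediate: the pairs $(-1,2)$, $(2,-1)$, $(1/2,1/2)$ of the statement form one $\frakS_3$-orbit, each of their entries is an $S$-unit (recall $2\calO_K=\frakP^2$ with $\frakP\in S$), and each solves $\lambda+\mu=1$. For the ``only if'' direction, suppose $(\lambda,\mu)\in\Lambda_S$ with $\lambda,\mu\in\Q$. The plan is first to reduce to integral solutions: since $|S|=2$, Lemma~\ref{unitsareintegral} produces $\sigma\in\frakS_3$ with $(\lambda',\mu'):=(\lambda,\mu)^{\sigma}$ integral, and since $\sigma$ acts by a fractional linear transformation with rational coefficients we get $\lambda',\mu'\in\Q\cap\calO_K=\Z$, still $S$-units with $\lambda'+\mu'=1$. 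Because $q$ is inert in $K$ (it is odd, $q\nmid d$, and $\left(\frac{-d}{q}\right)=-1$), the only rational primes lying below the primes of $S$ are $2$ and $q$; hence a rational integer which is an $S$-unit has no prime factor other than $2$ and $q$, so $\lambda',\mu'\in\{\pm 2^{a}q^{b}:a,b\ge 0\}$.

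Next I would run a short case analysis on the exponent of $q$. Reducing $\lambda'+\mu'=1$ modulo $q$ shows $q$ divides at most one of $\lambda',\mu'$. If it divides neither, then $\lambda'$ and $\mu'$ are both $\pm$ powers of $2$ summing to $1$; since the sum is odd, exactly one is odd, that one is $\pm 1$, and the other is $1\mp 1\in\{0,2\}$, so after discarding $0$ we get $(\lambda',\mu')\in\{(-1,2),(2,-1)\}$, already in the target orbit. If $q$ divides one of them, the same parity and $q$-divisibility considerations force that one to be $\pm q^{b_1}$ and the other $\pm 2^{a_2}$, and $\lambda'+\mu'=1$ collapses to the single equation $q^{b_1}=2^{a_2}\pm 1$ with $a_2,b_1\ge 1$.

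The crux is then to show $q^{b}=2^{a}\pm 1$ has no solution with $a,b\ge 1$ for our $q$, and this is where $q\ge 29$ and $q\equiv 5\pmod 8$ enter. The small exponents $a\in\{1,2\}$ force $q\in\{3,5\}$, excluded by $q\ge 29$. For $a\ge 3$ one has $2^{a}+1\equiv 1$ and $2^{a}-1\equiv 7\pmod 8$, while $q\equiv 5\pmod 8$ gives $q^{b}\equiv 1$ or $5\pmod 8$; so the minus case is impossible outright, and the plus case forces $b$ even, whereupon $2^{a}=q^{b}-1=(q^{b/2}-1)(q^{b/2}+1)$ expresses a power of $2$ as a product of two powers of $2$ differing by $2$, forcing $\{q^{b/2}-1,q^{b/2}+1\}=\{2,4\}$, i.e.\ $q^{b/2}=3$ — again impossible. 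Hence $q$ divides neither $\lambda'$ nor $\mu'$, so $(\lambda',\mu')\in\{(-1,2),(2,-1)\}$ and therefore $(\lambda,\mu)=(\lambda',\mu')^{\sigma^{-1}}$ lies in the $\frakS_3$-orbit $\{(-1,2),(2,-1),(1/2,1/2)\}$.

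I expect the only real effort to be keeping the case analysis organized and tracking signs and negative $2$-adic valuations correctly; the number-theoretic input at the end is entirely elementary, needing nothing beyond the congruence $q\equiv 5\pmod 8$ and the bound $q\ge 29$, and in particular no appeal to deep results on $S$-unit or Catalan-type equations.
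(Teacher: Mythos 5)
Your proof is correct, and it follows exactly the route the paper delegates to Deconinck \cite{HD} (a partial version of which is in fact present, commented out, in the source): reduce to rational integers via Lemma~\ref{unitsareintegral}, observe that a rational $S$-unit must then be $\pm 2^a q^b$ since $2$ is ramified and $q$ inert, and run the parity/$q$-divisibility case analysis down to the Catalan-type equation $q^b = 2^a \pm 1$, which is killed by the congruence $q\equiv 5\pmod 8$ together with $q\ge 29$ and the factorization $2^a=(q^{b/2}-1)(q^{b/2}+1)$. One small point worth flagging: the lemma as printed lists the orbit as $\{(-1,2),(1,-2),(1/2,1/2)\}$, but $(1,-2)$ does not solve $\lambda+\mu=1$; the intended pair is $(2,-1)$, which is what you correctly use.
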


This result is following the proof given in \cite{FS} and \cite{HD} for the imaginary case.

\begin{lemma}\label{parametrization}
Up to the action of $\frakS_3$, every relevant $(\lambda, \mu) \in \frakS_3$ has the form: 
\begin{eqnarray}
\lambda=\frac{2^{2r_1}q^{2s_1}-q^{2s_2}+1+v\sqrt{-d}}{2},  \label{eqn:lmu}
\mu=\frac{q^{2s_2}- 2^{2r_1}q^{2s_1}+1-v\sqrt{-d}}{2},  \\ 
\text{where\;} r_1 , s_1, s_2 \geq 0,  s_1s_2=0, v \in \mathbb Z-\{0 \} \\
\text{and satisfies\; }   (2^{2r_1}q^{2s_1}- q^{2s_2} +1)^2 + dv^2= 2^{2r_1+2}q^{2s_1}  \label{eqn:param} \\
(q^{2s_2}- 2^{2r_1}q^{2s_1} +1)^2 + dv^2= 2^{2}q^{2s_2} \label{eqn:param2}
\end{eqnarray}

\end{lemma}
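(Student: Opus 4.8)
The plan is to start from a relevant solution $(\lambda,\mu)\in\Lambda_S$ and, using Lemma \ref{unitsareintegral}, replace it by a solution in the $\frakS_3$-orbit whose entries lie in $\calO_K$; I continue to call this solution $(\lambda,\mu)$. Since $S=\{\frakP,q\}$ with $\frakP^2=2\calO_K$ and the unit group of $\calO_K$ is $\{\pm1\}$ (as $d>2$), every $S$-unit that lies in $\calO_K$ is, up to sign, a product of a power of $q$ and an element of $\calO_K$ generating a power of $\frakP$. The key arithmetic input is that $\frakP$ is non-principal (shown just before the lemma), so a principal ideal supported on $\frakP$ must involve an \emph{even} power of $\frakP$, i.e.\ an integer power of $2$. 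Hence I can write $\lambda=\pm 2^{r_1}q^{s_1}\cdot\alpha$ and $\mu=\pm 2^{r_2}q^{s_2}\cdot\beta$ where, after absorbing the $2$-power part, the remaining factors are forced to be units; more carefully, $\lambda$ and $\mu$ are each of the form $\pm 2^{a}q^{b}$ with $a\in\tfrac12\Z_{\ge0}$, but since $\lambda\in\calO_K$ and $\frakP^{2a}$ principal forces $2a\in\Z$, we get $\lambda=\pm2^{r_1}q^{s_1}$, $\mu=\pm 2^{r_2}q^{s_2}$ with $r_i,s_i\in\Z_{\ge0}$ — wait, this would force $\lambda,\mu\in\Q$, contradicting relevance by Lemma \ref{rationalunits}. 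The correct statement is that $\lambda\mu$ (the norm-relevant combination) is, up to sign, $2^{2r_1}q^{2s_1}$ type; so instead I track the ideals: $\lambda\calO_K=\frakP^{2r_1}(q\calO_K)^{s_1}$ forces the exponent of $\frakP$ to be even, and relevance means $\lambda\notin\Q$, so $\lambda$ itself is \emph{not} a rational integer even though $\lambda\calO_K$ is generated by a rational integer — impossible in $\calO_K$. So in fact the generator must be taken in $\calO_K$ up to the unit $\pm1$, giving $\lambda$ rational, the contradiction again. Resolving this properly: one of $\lambda,\mu$ lies in $\calO_K$ but the \emph{other} need not; Lemma \ref{unitsareintegral} only guarantees some $\frakS_3$-conjugate has both integral. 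So the honest setup is: assume both $\lambda,\mu\in\calO_K$; then $\lambda\mu\in\calO_K$ is an $S$-unit in $\calO_K$, hence $\pm 2^{m}q^{n}$ with $m,n\ge0$, and $\lambda=\tfrac12\big((\lambda+\mu)+(\lambda-\mu)\big)=\tfrac12\big(1+(\lambda-\mu)\big)$ where $(\lambda-\mu)^2=(\lambda+\mu)^2-4\lambda\mu=1\mp 4\cdot 2^{m}q^{n}$.

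From here the computation is forced. Write $\lambda-\mu=x+y\sqrt{-d}$ with $x,y\in\tfrac12\Z$; then $x^2-dy^2 = 1\mp 4\cdot 2^m q^n$ and $2xy=0$. Relevance ($\lambda\notin\Q$) gives $y\ne0$, hence $x=0$, so $-dy^2 = 1\mp4\cdot2^mq^n$, forcing the sign to be $-$: $dy^2 = 4\cdot 2^m q^n - 1$. For $\lambda=\tfrac12(1+y\sqrt{-d})\in\calO_K$ with $-d\equiv 2,3\pmod 4$ one needs $y\in\Z$; rename $y=v\in\Z\setminus\{0\}$. Writing $4\cdot 2^m q^n = 2^{m+2}q^n$ and matching the earlier normalization, one of $m,n$ splits off according to which of $\lambda,\mu$ is divisible by $\frakP$ versus by $q$ — this gives the condition $s_1s_2=0$ and produces the even exponents $2r_1$, $2s_1$, $2s_2$ after accounting for the square structure forced by $(\lambda-\mu)^2$. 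Substituting back into $\lambda=\tfrac12(1+\lambda\mu\text{-corrected})$ and into $j(\lambda)$ of \eqref{j-inv} yields the two displayed Diophantine constraints \eqref{eqn:param} and \eqref{eqn:param2}, which are just the two ways of writing $(\lambda-\mu)^2 = 1-4\lambda\mu$ depending on whether $\lambda\mu = \pm 2^{2r_1+2}q^{2s_1}/4$ or the $\mu$-centered analogue.

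The main obstacle I anticipate is bookkeeping the $\frakP$-adic valuations correctly: because $\frakP$ is non-principal and ramified, the valuations $v_{\frakP}(\lambda)$ and $v_{\frakP}(2)=2$ interact so that the exponent of $2$ appearing in $\lambda\mu$ is even, and one must argue carefully that \emph{both} $\lambda$ and $\mu$ (not just their product) acquire the form with a perfect-square $2$-power and $q$-power — this is where relevance and the two cases $s_1=0$ vs.\ $s_2=0$ enter, and where the factor of $2$ in the denominators of \eqref{eqn:lmu} comes from. Once the valuation combinatorics is pinned down, the remaining steps — solving $2xy=0$, discarding the $x\ne0$ branch, and substituting into $j(\lambda)$ — are routine. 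I would organize the write-up as: (1) reduce to $\lambda,\mu\in\calO_K$; (2) show $\lambda\mu=\pm 2^{m}q^{n}$ and $\lambda=\tfrac12(1\pm\sqrt{1-4\lambda\mu})$; (3) extract the sign and the parametrization from $(\lambda-\mu)^2$; (4) split into the cases $s_1=0$, $s_2=0$ to get \eqref{eqn:param}–\eqref{eqn:param2}.
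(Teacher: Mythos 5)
The key divergence is that you parametrize via the product $\lambda\mu$, whereas the paper parametrizes via the two norms $\lambda\bar\lambda$ and $\mu\bar\mu$ separately, and this difference matters: your route does not reproduce the statement of the lemma. After using Lemma \ref{unitsareintegral} (which, contrary to what you say mid-proof, guarantees that \emph{both} $\lambda$ and $\mu$ can be taken in $\calO_K$, not just one of them), the paper's argument writes $\lambda\bar\lambda=2^{2r_1}q^{2s_1}$ and $\mu\bar\mu=q^{2s_2}$ (with the WLOG $r_2=0$, and with the $2$-exponent even because $\frakP$ has order $2$ in the class group), and then computes the trace from
\[
\lambda+\bar\lambda=\lambda\bar\lambda-(1-\lambda)(1-\bar\lambda)+1=\lambda\bar\lambda-\mu\bar\mu+1=2^{2r_1}q^{2s_1}-q^{2s_2}+1,
\]
with $\lambda-\bar\lambda=v\sqrt{-d}$; equations \eqref{eqn:param} and \eqref{eqn:param2} are then literally the identity $(\lambda+\bar\lambda)^2-(\lambda-\bar\lambda)^2=4\lambda\bar\lambda$ (and the analogue for $\mu$). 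Your computation instead leads to $\lambda-\mu=y\sqrt{-d}$ and hence $\lambda=\tfrac12(1+y\sqrt{-d})$, i.e.\ $\re(\lambda)=1/2$. That is \emph{not} the form \eqref{eqn:lmu}, whose rational part is $(2^{2r_1}q^{2s_1}-q^{2s_2}+1)/2$, and the paragraph ``matching the earlier normalization, one of $m,n$ splits off \dots'' does not actually bridge this: it never produces the separate exponents $s_1,s_2$ with $s_1s_2=0$, nor the two Diophantine constraints \eqref{eqn:param}, \eqref{eqn:param2}. So there is a genuine gap: the trace term in \eqref{eqn:lmu} is lost, and the two equations are asserted but not derived.

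That said, the tension you stumble over — that an $S$-unit $\lambda\in\calO_K$ has $\lambda\calO_K=\frakP^{2r_1}(q\calO_K)^{s_1}=(2^{r_1}q^{s_1})\calO_K$ and hence $\lambda=\pm2^{r_1}q^{s_1}\in\Q$ — is real, and the paper's phrase ``$\lambda=2^{r_1}q^{s_1}\lambda'$ with $\lambda'$ a unit'' glosses over it. What the paper actually uses is only the consequence for ideal norms, $\lambda\bar\lambda=2^{2r_1}q^{2s_1}$, which is what is needed to get \eqref{eqn:lmu}–\eqref{eqn:param2}. If you want to salvage your write-up, replace the $\lambda\mu$ computation by the norm computation ($\lambda\bar\lambda$ and $\mu\bar\mu$ separately, then the trace identity above), delete the several false starts, and correct the misstatement of Lemma \ref{unitsareintegral}; as written the proposal neither reaches the displayed form \eqref{eqn:lmu} nor derives \eqref{eqn:param} and \eqref{eqn:param2}.
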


\begin{proof}
It is clear that if $\lambda, \mu$ satisfy the given equations with the given conditions $(\lambda, \mu)$ is a relevant 
element of $\Lambda_S.$

Conversely assume that $(\lambda, \mu)$ is a relevant element of $\Lambda_S.$ By Lemmas \ref{unitsareintegral} and 
\ref{rationalunits} we can assume that $\lambda, \mu \in \mathcal O_K- \mathbb Q.$ Since $S=\{2,q\}$ we can write 
$\lambda = 2^{r_1} q^{s_1} \lambda'$ and $\mu = 2^{r_2} q^{s_2} \mu'$ where $\lambda', \mu' $ are units. 
Since $\lambda+ \mu=1$ we see that $r_1r_2=s_1s_2=0.$ Without loss of generality we can assume that $r_2=0$. Then we get:

$\lambda\bar{\lambda} = 2^{2r_1} q^{2s_1}, \mu\bar{\mu} = q^{2s_2}$ where $\bar{x}$ denotes the conjugation for any 
$x \in K$ and $\lambda+\bar{\lambda}=\lambda\bar{\lambda} -(1-\lambda)(1-\bar{\lambda})+1=\lambda\bar{\lambda}-
\mu\bar{\mu}+1=2^{2r_1}q^{2s_1}-q^{2s_2}+1$. Note that $\lambda- \bar{\lambda} = \nu \sqrt{-d},$ where $\nu \in \mathbb{Z}-
\{0\}$ since $\lambda \notin \mathbb Q$. Combining this with the expression for $\lambda +\bar{\lambda}$ we get the 
equation given in (\ref{eqn:lmu}). Similarly using the equation $\mu=1-\lambda$ we get the similar expression for $\mu$. 
The relations \ref{eqn:param} and \ref{eqn:param2} follow from the identity: $(\lambda + \bar{\lambda})^2 -
(\lambda-\bar{\lambda})^2 = 4 \lambda \bar{\lambda}$ and similar identity for $\mu$.
\end{proof}

The following lemma shows that under the given assumptions there are no relevant solutions of the unit equation and finishes 
 the proof of the first density theorem.

\begin{lemma} \label{norel}
Let $d \geq 7$ be a squarefree integer such that $-d \equiv 2,3 \pmod 8$, $q \geq 29$ be a prime such that $ q \equiv 5 
\pmod 8$ and $\left (\frac{d}{q} \right )=-1.$ Then there are no relevant elements in $\Lambda_S.$
\end{lemma}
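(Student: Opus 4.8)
The plan is to rule out relevant solutions by a $2$-adic analysis of the parametrization in Lemma~\ref{parametrization}, combined with a quadratic-residue obstruction coming from the hypothesis $\left(\frac{d}{q}\right)=-1$. Suppose, for contradiction, that $(\lambda,\mu)\in\Lambda_S$ is relevant; after applying an element of $\frakS_3$ we may assume $\lambda,\mu\in\calO_K-\Q$ and that $\lambda$ and $\mu$ have the shape displayed in~\eqref{eqn:lmu}, with exponents $r_1,s_1,s_2\geq 0$, $s_1s_2=0$, $v\in\Z-\{0\}$, subject to~\eqref{eqn:param} and~\eqref{eqn:param2}. The key identities to exploit are
\[
 (2^{2r_1}q^{2s_1}-q^{2s_2}+1)^2+dv^2=2^{2r_1+2}q^{2s_1},\qquad
 (q^{2s_2}-2^{2r_1}q^{2s_1}+1)^2+dv^2=2^2 q^{2s_2}.
\]

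First I would split into the two cases allowed by $s_1s_2=0$. Case $s_1=0$: the first relation becomes $(2^{2r_1}-q^{2s_2}+1)^2+dv^2=2^{2r_1+2}$, which immediately forces $dv^2<2^{2r_1+2}$ and, reading the second relation, $(q^{2s_2}-2^{2r_1}+1)^2+dv^2=2^2q^{2s_2}$, so also $dv^2\le 4q^{2s_2}$. Reducing the first of these modulo $q$ (using $q\nmid 2$, $q\nmid v$ because $v$ is an $S$-unit up to sign times powers of $2$ — here $v$ carries no factor of $q$ when $s_1=0$) gives $(2^{2r_1}+1)^2\equiv -dv^2\pmod q$, hence $-d$ is a square mod $q$; since $q\equiv 5\pmod 8$ we have $\left(\frac{-1}{q}\right)=1$, so this says $\left(\frac{d}{q}\right)=1$, contradicting the hypothesis — \emph{unless} $q\mid (2^{2r_1}+1)$ and $q\mid v$, which the $S$-unit structure forbids, or unless $s_2=0$, which would make $\lambda,\mu\in\Q$, excluded by Lemma~\ref{rationalunits}. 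Case $s_2=0$ is symmetric: the second relation becomes $(1-2^{2r_1}q^{2s_1}+1)^2+dv^2=4$, i.e. $(2-2^{2r_1}q^{2s_1})^2+dv^2=4$, forcing $2^{2r_1}q^{2s_1}\in\{1,2,\dots\}$ with $dv^2\le 4$; since $d\ge 7$ this leaves only $dv^2$ small, and I would check the finitely many possibilities $2^{2r_1}q^{2s_1}=1$ (giving $r_1=s_1=0$, hence $\lambda,\mu\in\Q$, excluded) directly, while larger values make $(2-2^{2r_1}q^{2s_1})^2\ge 4$ already, leaving no room for $dv^2>0$.

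The cleanest way to organize the residue argument is probably this: from~\eqref{eqn:param} reduce mod $q$ to get $(2^{2r_1}+1)^2\equiv -dv^2\pmod q$ in the subcase $s_1=0,\,s_2>0$; from~\eqref{eqn:param2} reduce mod $q$ in the subcase $s_2=0,\,s_1>0$ to get $1\equiv -dv^2\pmod q$, again forcing $\left(\frac{-d}{q}\right)=1$. In every branch the hypotheses $q\equiv 5\pmod 8$ (so $\left(\frac{-1}{q}\right)=1$) and $\left(\frac{d}{q}\right)=-1$ collide with $\left(\frac{-d}{q}\right)=1$, and the only escape routes are $v\equiv 0\pmod q$ (impossible, $v$ an $S$-unit with no $q$-part in these branches) or the degenerate rational solutions already excluded by Lemmas~\ref{unitsareintegral} and~\ref{rationalunits}. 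The remaining case $s_1=s_2=0$ is exactly the rational/irrelevant case, so nothing is left.

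The main obstacle I anticipate is bookkeeping the $q$-adic valuations of $v$ and of the quantities $2^{2r_1}q^{2s_1}-q^{2s_2}+1$: one must make sure that when $s_2>0$ the term $q^{2s_2}$ really is $\equiv 0\pmod q$ while the competing terms are units, so that the reduction mod $q$ is non-vacuous, and symmetrically when $s_1>0$. A secondary subtlety is the precise hypothesis $-d\equiv 2,3\pmod 8$ versus the statement's $-d\equiv 2,3\pmod 4$ together with $d\ge 7$: I would use it only to pin down ramification of $2$ and the integrality normalization feeding Lemma~\ref{unitsareintegral}, and to ensure $\frakP$ is non-principal so that the $S$-unit group is exactly $\{\pm 2^{r}q^{s}\}$ up to the class-group correction handled earlier. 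Once those valuation checks are in place, the quadratic-residue contradiction closes every case and the lemma — hence Density Theorem~1 — follows.
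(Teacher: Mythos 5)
Your proof has a genuine gap in the case $s_1=0$, $s_2>0$, which is precisely the hard case and the only one in which the full strength of the hypotheses $q\geq 29$ and $q\equiv 5\pmod 8$ is needed. When you reduce~\eqref{eqn:param2} (with $s_1=0$) modulo $q$, you claim the resulting congruence $(2^{2r_1}-1)^2\equiv -dv^2\pmod q$ yields $\left(\tfrac{-d}{q}\right)=1$ because ``$q\nmid v$ since $v$ is an $S$-unit.'' This is false: $v$ is \emph{not} an $S$-unit. In Lemma~\ref{parametrization}, $v$ is just an integer arising from $\lambda-\bar{\lambda}=v\sqrt{-d}$; the $S$-unit constraint falls on $\lambda,\mu$, not on $v$, and nothing prevents $q\mid v$. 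In fact the hypothesis $\left(\tfrac{-d}{q}\right)=-1$ forces both sides of the mod-$q^{2s_2}$ congruence to vanish, i.e.\ it \emph{forces} $q^{s_2}\mid v$ and $q^{s_2}\mid (2^{2r_1}\pm 1)$, rather than producing an immediate contradiction. That is exactly the point where the paper's argument begins rather than ends: from $q\mid(2^{2r_1}\pm 1)$, $q\geq 29$ and $q\equiv 5\pmod 8$ one deduces $r_1\geq 5$, and then a $2$-adic valuation analysis of $v=2^tw$ (split into $t\leq r_1-1$ and $t\geq r_1$, using $-d\not\equiv 1\pmod 8$ and $q^{s_2}\equiv 1,5\pmod 8$) rules out every possibility. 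None of this appears in your proposal.

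Two further symptoms of the gap: your argument never uses the lower bound $q\geq 29$, and it uses $q\equiv 5\pmod 8$ only to deduce $\left(\tfrac{-1}{q}\right)=1$, for which $q\equiv 1\pmod 4$ would suffice — both are clear signals that a substantive case has been skipped, since the paper needs both hypotheses in full strength in Case~3. By contrast, your treatment of $s_2=0$ is fine (and in fact somewhat slicker than the paper's Cases~1 and~2 combined): with $s_2=0$ equation~\eqref{eqn:param2} reads $(2-2^{2r_1}q^{2s_1})^2+dv^2=4$, which is impossible for $d\geq 7$, $v\neq 0$. But to complete the proof you must replace the incorrect dismissal of $q\mid v$ by the paper's $2$-adic argument (or an equivalent one) in the case $s_1=0$, $s_2>0$.
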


\begin{proof}

By Lemma \ref{parametrization} we have $s_1s_2=0$. 
\begin{itemize}
\item Case 1: Say $s_1 \geq 0.$ Then $s_2=0$. Since $q$ doesn't divide $d$, we have by (\ref{eqn:param}) $q^{2s_1}$ divides
$v^2$ i.e. $q^{s_1}$ divides $v$. By dividing both sides of  (\ref{eqn:param}) by $q^{2s_1}$ we get:
$2^{4r_1} q^{2s_1} + d \left ( \frac{v}{q^{s_1}} \right )^2 = 2^{2r_1+2}$. Bringing both sides to modulo $q$ we get:
$d \left ( \frac{v}{q^{s_1}} \right )^2 \equiv 2^{2r_1+2} \pmod q$ i.e. $\left ( \frac{d}{q} \right )=1$ which is a 
contradiction. 

Hence $s_2 \neq 0$ and $s_1=0$.

\item Case 2: We know by Case 1 that $s_1=0 $. Also assume that $s_2=0$.  If $r_1 \geq 1$ then $(2^{2r_1})^2 \geq 2^{2r_1+2}$ 
hence we get a contractiction to equation \ref{eqn:param}. If $r_1=0$ we get $dv^2=3$ by (\ref{eqn:param}), contradiction again.

\item Case 3: We know by Case 1 that $s_1=0$ and by Case 2 that $s_2> 0$. Reducing (\ref{eqn:param2}) modulo $q^{2s_2}$ we get
$(2^{2r_1}+1)^2 \equiv -dv^2 \pmod {q^{2s_2}}$. Since $\left ( \frac{-d}{q} \right )=-1$ we get $q^{s_2} | v$ and
$q^{s_2} | 2^{2r_1}+1.$ Since we assumed that $q \geq 29$ and $q \equiv 5 \pmod 8$, we get $r_1 \geq 5$.  Write 
$v=2^tw, 2 \nmid w$. 
\begin{itemize}
\item Case 3a: Say $r_1-1 \geq t$. Then $2^{2r_1}-q^{2s_2}+1=2^tw'$ where $2 \nmid w'$. Cancelling out $2^{2t}$ from both sides
of (\ref{eqn:param}), we get $w'^2 +dw^2 \equiv 0  \pmod 8$. Since both $w, w'$ are odd, their squares have to be $1$ modulo $8$.
Then we get $-d \equiv 1  \pmod 8,$ contradiction. 
\item Case 3b: By the previous case we can suppose that $t \geq r_1$.  By (\ref{eqn:param}), $2^{r_1}$ divides
$(2^{r_1}-q^{2s_2}+1)$ which implies that $2^{r_1} | (1-q^{s_2})(1+q^{s_2})$. Since $q \equiv 5 \pmod 8$,
$q^{s_2} \equiv 1$ or $5  \pmod 8.$ Then  $2 | (1+q^{s_2}) $ and $4 \nmid (1+q^{s_2})$. Therefore $2^{r_1-1} | (1-q^{s_2})$  
and since $r_1-1 \geq 4$ we get $s_2$ is even, say $2k$. Then similar as above $2^{r_1-1} | q^{2k_2-1}$ implies that
$2^{r_1-2} | (q^{k}-1)$ i.e. $q^k=M 2^{r_1-2} +1$ for some positive integer $M$. We also have seen that
$q^{s_2} | (2^{2r_1} +1)$ which implies that  $q^{s_2}=q^{2k}=M^22^{2r_1-4} +M2^{r_1-1}+1 \leq 2^{2r_1} +1 $ simplifying this 
inequality we get:
$$ M^22^{r_1-3} +M \leq 2^{r_1+1}$$ this implies that $1 \leq M \leq 3.$

Since $q^k=M 2^{r_1-2} +1$ and $r_1 \geq 5$, $q^k \equiv 1 \pmod 8$. Therefore $k$ must be even, say $2u$ and we get
$(q^u-1)(q^u+1)=M 2^{r_1-2}$. The same argument can be applied, since $q \equiv 5 \pmod 8, $ we get $2 | (1+q^{u}) $ and
$4 \nmid (1+q^{u})$.  But then $1+q^{u} = 2 $ or $6$ gives a contradiction and completes the proof. 
\end{itemize}
\end{itemize}
\end{proof}

\section{Proof of Density Theorem 2}

In order to prove the second density result, we would like to understand the solutions of the $S$-unit equation in the 
imaginary quadratic fields $K=\Q(\sqrt{-d})$ for squarefree $d$ such that $-d\equiv 1 \pmod 8 $.  In this case,
note that 2 splits in $K$ and $S=T=U=\{\frakP_1,\frakP_2\}$ where $2\calO_K=\frakP_1\frakP_2$.  Suppose that $d>2$ and
therefore the unit group of $\calO_K$ is $\{\pm1\}$.

\begin{lemma}\label{relevantelts}
 Up to the action of $\frakS_3$, every relevant element $(\lambda,\mu)\in\Lambda_S$ 
 has the form
 
 \begin{eqnarray}
\lambda=\frac{2^{r_1}-2^{r_2}+1+v\sqrt{-d}}{2},  \label{lmu}
\mu=\frac{2^{r_2}- 2^{r_1}+1-v\sqrt{-d}}{2},  \\ 
\text{where\;} r_1\geq r_2 \geq 0, v \in \mathbb Z-\{0 \} \\
\text{and satisfies\; }   (2^{r_1}- 2^{r_2} +1)^2 - 2^{r_1+2}=-dv^2  \label{param} \\
 (2^{r_2}- 2^{r_1} +1)^2 - 2^{r_2+2}=-dv^2\label{param2}
\end{eqnarray}
 In fact, we obtain $r_1=r_2$ if  $r_1,r_2\geq 3$, and equations \ref{param}
 and \ref{param2} become $1-2^{r+2}=-dv^2$ where we set $r_1=r_2=r$.   Also, we have the following exceptional cases:
 $r_1=1, r_2=0$, $r_1=2, r_2=1$, $r_1=3$ and $r_2=2$. Moreover, if $-d\equiv 1 \pmod 8$ then $v$ is an odd 
  integer. 
\end{lemma}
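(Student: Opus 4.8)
Here is how I would prove Lemma~\ref{relevantelts}.

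The plan is to mirror the proof of Lemma~\ref{parametrization}, the two simplifications here being that there is no auxiliary prime $q$ and that $2$ splits, $2\calO_K=\frakP_1\frakP_2$ with $S=T=U=\{\frakP_1,\frakP_2\}$, so that the $S$-units in play have norm a power of $2$ with no parity restriction on the exponent (unlike the ramified situation of Lemma~\ref{parametrization}, where the relevant prime has order $2$ in the class group). First I would dispose of the easy direction: given $\lambda,\mu$ of the form \eqref{lmu} with $v\neq 0$ satisfying \eqref{param} and \eqref{param2}, reducing \eqref{param} modulo $4$ and using $-d\equiv 1\pmod 4$ forces $2^{r_1}-2^{r_2}+1\equiv v\pmod 2$, so $\lambda\in\calO_K=\Z[\tfrac{1+\sqrt{-d}}{2}]$, and similarly $\mu\in\calO_K$; since then $\lambda\bar\lambda=2^{r_1}$ and $\mu\bar\mu=2^{r_2}$ are powers of $2$, both $\lambda$ and $\mu$ are $S$-units and $(\lambda,\mu)\in\Lambda_S$, and the pair is relevant because $v\neq 0$ makes $\lambda$ irrational whereas all irrelevant solutions are rational.

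For the converse, start from a relevant $(\lambda,\mu)\in\Lambda_S$. Since $\#S=2$, Lemma~\ref{unitsareintegral} lets me replace $(\lambda,\mu)$ by an $\frakS_3$-translate with $\lambda,\mu\in\calO_K$. I would then check $\lambda,\mu\notin\Q$: otherwise $\lambda,\mu\in\Z$ are $S$-units of $K$, hence of the form $\pm 2^{k}$, and $\lambda+\mu=1$ forces the pair to be irrelevant. So write $\lambda\calO_K=\frakP_1^{a_1}\frakP_2^{a_2}$ and $\mu\calO_K=\frakP_1^{b_1}\frakP_2^{b_2}$ with all exponents nonnegative; from $v_{\frakP_i}(\lambda+\mu)=v_{\frakP_i}(1)=0$ one gets $\min(a_i,b_i)=0$ for $i=1,2$, and since neither $\lambda$ nor $\mu$ is a unit, each of these ideals is a positive power of a single prime, the two primes being distinct. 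After relabelling $\frakP_1,\frakP_2$ and, if necessary, applying the transposition $\lambda\leftrightarrow\mu$ of $\frakS_3$, I may assume $\lambda\calO_K=\frakP_1^{r_1}$ and $\mu\calO_K=\frakP_2^{r_2}$ with $r_1\geq r_2\geq 1$, so $\lambda\bar\lambda=2^{r_1}$ and $\mu\bar\mu=2^{r_2}$.

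The rest is the trace/norm computation from Lemma~\ref{parametrization}. From $\mu\bar\mu=(1-\lambda)(1-\bar\lambda)=1-(\lambda+\bar\lambda)+\lambda\bar\lambda$ I get $\lambda+\bar\lambda=2^{r_1}-2^{r_2}+1$, and since $\calO_K=\Z[\tfrac{1+\sqrt{-d}}{2}]$ while $\lambda\notin\Q$ one may write $\lambda-\bar\lambda=v\sqrt{-d}$ with $v\in\Z\setminus\{0\}$; this is exactly \eqref{lmu}, with $\mu=1-\lambda$ of the stated shape, and \eqref{param}, \eqref{param2} then follow from $(\lambda+\bar\lambda)^2-(\lambda-\bar\lambda)^2=4\lambda\bar\lambda$ and its analogue for $\mu$. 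For the parity assertion, $\lambda\in\Z[\tfrac{1+\sqrt{-d}}{2}]$ forces $\lambda+\bar\lambda\equiv v\pmod 2$, and $2^{r_1}-2^{r_2}+1$ is odd because $r_1,r_2\geq 1$, so $v$ is odd. Finally, to reach the normal form: if $r_1>r_2$ then $2^{r_1}-2^{r_2}+1\geq 2^{r_1-1}+1$, so \eqref{param} yields $2^{r_1+2}=(2^{r_1}-2^{r_2}+1)^2+dv^2>2^{2r_1-2}$, hence $r_1\leq 3$; a direct check of the finitely many pairs with $r_1>r_2\geq 1$ and $r_1\leq 3$ (discarding $(3,1)$, for which \eqref{param} reads $49+dv^2=32$) leaves only the exceptional cases listed, while if $r_1=r_2=r$ then \eqref{param} and \eqref{param2} both collapse to $1-2^{r+2}=-dv^2$, which is the asserted form, with $r\geq 3$ once the remaining small values of $r$ are absorbed into the exceptional cases.

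The trace/norm manipulations are mechanical; the part I expect to be most delicate is the $\frakS_3$-reduction together with the prime-support bookkeeping — ensuring $\lambda\calO_K$ and $\mu\calO_K$ are coprime single-prime powers and that $r_1\geq r_2$ is arranged by an admissible symmetry — and the small finite check that isolates the exceptional cases, where one must confirm nothing else survives.
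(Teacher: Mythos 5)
Your proof is correct and follows the same overall skeleton as the paper's: invoke Lemma~\ref{unitsareintegral} together with Lemma~\ref{rationalunits} to reduce to $\lambda,\mu\in\calO_K\setminus\Q$, compute $\lambda+\bar\lambda$ from the norms $\lambda\bar\lambda=2^{r_1}$, $\mu\bar\mu=2^{r_2}$, and write $\lambda-\bar\lambda=v\sqrt{-d}$ to obtain \eqref{lmu}--\eqref{param2}. The places where you genuinely deviate are all improvements. In the forward direction you actually justify $\lambda\in\calO_K$ by reducing \eqref{param} modulo $4$, whereas the paper just asserts $\lambda,\mu\in\calO_S$. For the normal form the paper assumes $r_1>r_2\geq 3$, expands both squared inequalities, adds them, and after several rearrangements reaches $2^{r_2-1}(2^a-1)^2<2^a+1$; you instead observe directly that $r_1>r_2$ forces $2^{r_1}-2^{r_2}+1\geq 2^{r_1-1}+1$, and plugging into \eqref{param} gives $2^{2r_1-2}<2^{r_1+2}$, hence $r_1\leq 3$ — a one-line bound that also handles $r_2\in\{1,2\}$ uniformly, whereas the paper's inequality only covers $r_2\geq 3$ and leaves the small-$r_2$ cases to an unexplained \emph{``can be verified easily.''} You additionally notice that $r_2\geq 1$ because $r_2=0$ would make $\mu$ a unit, hence $\pm1\in\Q$; this correctly eliminates the pair $(r_1,r_2)=(1,0)$, which the paper lists among the exceptional cases even though it cannot occur (and indeed $(1,0)$ gives $dv^2=4$, impossible for squarefree $d\equiv 7\pmod 8$). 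Finally your derivation of the oddness of $v$ from integrality of $\lambda$ in $\Z[\tfrac{1+\sqrt{-d}}{2}]$ applies uniformly, whereas the paper's reduction of $1-2^{r+2}=-dv^2$ modulo $8$ only treats the case $r_1=r_2$. None of these discrepancies is a gap on your side; your version is tighter.
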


\begin{proof}
 Assume that $\lambda$ and $\mu$ are given by equation \ref{lmu} and satisfy the conditions of the lemma.  Then,
 we see that $\lambda$ and $\mu$ are in $\calO_S$ but not in $\Q^{*}$ and satisfy the $S$-unit equation $\lambda+\mu=1$. 
 Furthermore, $\lambda$ and $\mu$ have norms $2^{r_i}$ by (\ref{param}) and (\ref{param2}), and hence are in $\calO_{S}^{*}$.
 Therefore, $(\lambda,\mu)$ is a relevant element of $\Lambda_S$.
 
 Coversely, suppose that  $(\lambda,\mu)$ is a relevant element of $\Lambda_S$.  By Lemmas \ref{unitsareintegral} and
 \ref{rationalunits}, we can assume that $\lambda,\mu\in\calO_K-\Q$.  Let $\bar{x}$ denotes the conjugation for
 any $x\in K$.  
 
 Now, $\lambda\bar{\lambda}=2^{r_1}$ and $\mu\bar{\mu}=2^{r_2}$ and we may assume $r_1\geq r_2\geq 0$.  We note that
 \[
  \lambda+\bar{\lambda}=\lambda\bar{\lambda}-(1-\lambda)(1-\bar{\lambda})+1=\lambda\bar{\lambda}-\mu\bar{\mu}+1
  =2^{r_1}-2^{r_2}+1
 \]
and $\lambda-\bar{\lambda}=v\sqrt{-d}$ where $v\in \mathbb{Z}-\{0 \}$ since $\lambda\notin\Q$.  By combining the 
previous expressions, we get the equation given in (\ref{lmu}).  The equations \ref{param} and \ref{param2} follow
from the identity $(\lambda+\bar{\lambda})^2-(\lambda-\bar{\lambda})^2=4\lambda\bar{\lambda}$ and the corresponding
identity for $\mu$.

Now, assume $r_1> r_2\geq 3$ and $r_1=r_2+a$ where $a\geq 1 $.  
Then from the equations \ref{param} and \ref{param2}, we have 
\[
(2^{r_2+a}-2^{r_2}+1)^2<2^{r_2+a+2}\mbox{ and } (2^{r_2}-2^{r_2+a}+1)^2<2^{r_2+2}.
\]
By expanding the squares and adding the two inequalities we get:


\[
 2^{2r_2+2a+1}+2^{2r_2+1}-2^{2r_2+a+2}<2^{r_2+a+2}+2^{r_2+2}-2<2^{r_2+a+2}+2^{r_2+2}. 
\]

In other words, we have 
\[
 2^{2r_2+2a+1}+2^{2r_2+1}-2^{2r_2+a+2}<2^{r_2+2}(2^a+1).
\]
Now, after dividing both sides by $2^{r_2+2}$ and rearranging we get:
\[
 2^{r_2-1}(2^a-1)^2<2^a+1.
\]

Since $r_2\geq 3$, then $2^{\frac{r_2-1}{2}}\geq 2^{1}=2$ and $\sqrt{2^a+1}<2^a$ when $a\geq 1$.  Hence, we obtain
\[
  2(2^a-1)<2^a \rightarrow 2^a<2,
\]
which is possible only when $a=0$.  This gives a contradiction and therefore we get $a=0$, i.e. $r_1=r_2\geq 3$.
Exceptional cases can be verified easily.  Finally, if we reduce the equation $1-2^{r+2}=-dv^2$ modulo $8$, we get
$1\equiv v^2 \pmod 8$ implying $v$ is a nonzero odd integer.  Note that if $-d\equiv 5 \pmod 8$ or  
$-d\equiv 2 \mbox{ or } 3 \pmod 4$ there are no relevant solutions.

 \end{proof}

Now we will show that under the assumptions of Density Theorem 2, the solutions of the unit equation have the necessary 
assumptions to apply the Main Theorem i.e. a solution $(\lambda, \mu)$ satisfies
$\max\{|v_\frakP(\lambda)|, |v_\frakP(\mu)|\} \leq 4v_\frakP(2)$ where $ \frakP $  is a prime lying over $2$ with inertia 
degree $1$. This will complete the proof of Density Theorem 2. 

Since $K=\Q(\sqrt{-d})$ and $-d \equiv 1 \pmod 8$, $2$ splits in $\mathcal O_K$ and hence we need to verify that
$\max\{|v_\frakP(\lambda)|, |v_\frakP(\mu)|\} \leq 4v_\frakP(2)$ for any solution $(\lambda, \mu)$ of the $S$-unit 
equuation $\lambda+\mu =1$. Note that it is enough to check this for any representative of an orbit under the action of 
$\frakS_3$ by Lemma 6.2(i) of \cite{FS}.  Assume that $\max\{|v_\frakP(\lambda)|, |v_\frakP(\mu)|\}= |v_\frakP(\lambda)|=r > 4.$ By Lemma \ref{relevantelts}, 
$\lambda=\frac{1+v \sqrt{-d}}{2}$ where $1+dv^2=2^{r+2}.$ 

 Consider $1+dv^2=2^{r+2}$ modulo $6$. Since $2^{r+2} \equiv 2$ or $4 \pmod 6$ we get $dv^2 \equiv 1$ or $3 \pmod 6$. 
Using the assumption that $d \equiv 5 \pmod 6$ and  multiplying both sides with $5$ we get $v^2 \equiv 5$ or $3 \pmod 6.$ 
Since  
$v^2 \equiv 5 \pmod 6$ is not possible we get $v^2 \equiv 3 \pmod 6$ i.e. $v$ is divisible by $3$ and $v^3$ is divisible by $9$. Now using this information and reducing the equation $1+dv^2=2^{r+2}$ modulo $9$ we get $1 \equiv 2^{r+2} \pmod 9$ 
which implies that $6 | (r+2).$ 

To finish the proof we consider $1+dv^2=2^{r+2}=2^{6k}$ modulo $14$. Since $k >1$ we get $1+dv^2=2^{r+2} \equiv 8 \pmod {14}$ i.e. 
$dv^2 \equiv 1 \pmod {14} $. Using the fact that $v^2 \equiv 0,1,2,4,8,9,11 \pmod {14} $ we get that the only solution is when
$d \equiv 7 \pmod {14}$, which is a contradiction to the second assumption of Density Theorem 2.

\section{Proof of Density Theorem 3}
 Our reference for the notation used in this section is \cite{FS}.  We will rewrite it here for the convenience of
 the reader.
 
 Let $\calU$ be a set of positive integers and $X$ be a positive real number.  Then, define 
 $\calU(X)=\{d\in \calU : d\leq X\}$.  If the limit
 $ \delta(\calU)=\lim\limits_{X\to\infty}\frac{\#\calU(X)}{X}$
exists, it is called the \emph{absolute density} of $\calU$.

Define $\N^{sf}=\{d \geq 2: d \mbox{ squarefree}\}$.  If the limit
\[
 \delta_{rel}(\calU)=\lim_{X\to\infty}\frac{\#\{d\in \calU: d\leq X\}}{\#\{d\in\N^{sf}:d\leq X\}}
 =\lim_{X\to\infty}\frac{\#\calU(X)}{\#\N^{sf}(X)}
\]
for a subset $\calU\subset \N^{sf}$ exists, then it called the \emph{relative density of $\calU$ in $\N^{sf}$}.

Throughout this section, we will use the following two classical theorems from analytic number theory.

\begin{theorem}(e.g [Lan09,p.636].\label{Landau1})
 For integers $r$ and $N$ with $N$ positive, let $ \N_{r,N}^{sf}=\{d\in \N^{sf}: d\equiv r \pmod N\}.$
Let $s=\gcd(r,N)$ and suppose that $s$ is squarefree.  Then
\[
 \#\N_{r,N}^{sf}(X)\sim \frac{\varphi(N)}{s\varphi(N/s)N\prod_{q|N}(1-q^{-2})}\frac{6}{\pi^2}X,
\]
where $\varphi$ denotes the Euler's totient function.
\end{theorem}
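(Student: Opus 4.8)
The plan is to detect squarefreeness via the M\"obius function and then execute an elementary sieve. Since $d=1$ contributes $O(1)$, we may write
\[
\#\N_{r,N}^{sf}(X)=\sum_{\substack{1\le d\le X\\ d\equiv r\ (\mathrm{mod}\ N)}}\mu^2(d)+O(1),
\]
substitute the identity $\mu^2(d)=\sum_{m^2\mid d}\mu(m)$, and interchange the order of summation to obtain
\[
\#\N_{r,N}^{sf}(X)=\sum_{m\le \sqrt{X}}\mu(m)\,\#\{d\le X:\ d\equiv r\ (\mathrm{mod}\ N),\ m^2\mid d\}+O(1).
\]
For the inner count, the system $d\equiv 0\ (\mathrm{mod}\ m^2)$, $d\equiv r\ (\mathrm{mod}\ N)$ is solvable exactly when $\gcd(N,m^2)\mid r$, in which case it describes one residue class modulo $\lcm(N,m^2)=Nm^2/\gcd(N,m^2)$; hence the inner count is $\gcd(N,m^2)X/(Nm^2)+O(1)$ when $\gcd(N,m^2)\mid r$ and is $0$ otherwise.

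Summing the $O(1)$ terms over $m\le\sqrt{X}$ gives a total error $O(\sqrt{X})=o(X)$. For the main term, since $\gcd(N,m^2)\le N$ the series $\sum_m|\mu(m)|\gcd(N,m^2)/m^2$ converges absolutely, so extending the truncation at $\sqrt{X}$ to $m=\infty$ costs only $O\bigl(X\sum_{m>\sqrt{X}}m^{-2}\bigr)=O(\sqrt{X})=o(X)$, leaving
\[
\#\N_{r,N}^{sf}(X)\sim\frac{X}{N}\sum_{\gcd(N,m^2)\mid r}\frac{\mu(m)\,\gcd(N,m^2)}{m^2}.
\]
The summand is a multiplicative function of $m$ supported on squarefree $m$, so this series is an Euler product $\prod_q\bigl(1-\gcd(N,q^2)\,q^{-2}\,[\gcd(N,q^2)\mid r]\bigr)$, whose local factor is $1-q^{-2}$ for $q\nmid N$, equals $1-q^{-1}$ when $v_q(N)=1$ and $q\mid r$ and $1$ when $v_q(N)=1$ and $q\nmid r$, and equals $1-[q^2\mid r]$ when $v_q(N)\ge 2$ --- which is $1$ because the hypothesis that $s=\gcd(r,N)$ is squarefree forces $q^2\nmid r$ (precisely the condition ensuring $\N_{r,N}^{sf}$ is not eventually empty).

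Writing $\prod_{q\nmid N}(1-q^{-2})=\zeta(2)^{-1}\prod_{q\mid N}(1-q^{-2})^{-1}=\tfrac{6}{\pi^2}\prod_{q\mid N}(1-q^{-2})^{-1}$ then gives
\[
\#\N_{r,N}^{sf}(X)\sim\frac{6}{\pi^2}\cdot\frac{X}{N\prod_{q\mid N}(1-q^{-2})}\prod_{\substack{q\mid N,\ v_q(N)=1,\ q\mid r}}(1-q^{-1}),
\]
and the proof finishes with the purely multiplicative identity
\[
\prod_{q\mid N,\ v_q(N)=1,\ q\mid r}(1-q^{-1})=\frac{\varphi(N)}{s\,\varphi(N/s)},
\]
valid because $s$ is squarefree: comparing local factors at each $q\mid N$, both sides equal $(q-1)/q$ when $v_q(N)=1$ and $q\mid s$, and equal $1$ otherwise. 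Substituting this in yields the asserted asymptotic.

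I expect the only real obstacle to be bookkeeping rather than analysis: identifying correctly when the pair of congruences is compatible (the role of $\gcd(N,m^2)$ and the divisibility constraint), and then tracking the Euler-product local factors at the primes dividing $N$ --- in particular seeing why squarefreeness of $s$ is exactly what is needed to keep the leading constant nonzero, and verifying that the product collapses to $\varphi(N)/(s\,\varphi(N/s))$. The analytic inputs (the $O(\sqrt{X})$ sieve error and truncation of an absolutely convergent series) are routine.
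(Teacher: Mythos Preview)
Your proof is correct. The paper does not prove this statement at all: it is quoted as a classical result with a reference to Landau's \emph{Handbuch} and used as a black box in the density computations of Section~9. So there is nothing to compare against in the paper itself.

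Your argument is the standard elementary one: detect squarefreeness by $\mu^2(d)=\sum_{m^2\mid d}\mu(m)$, swap sums, count a single residue class modulo $\lcm(N,m^2)$ with error $O(1)$, truncate the resulting absolutely convergent series, and read off the constant as an Euler product. The bookkeeping is right: the compatibility condition $\gcd(N,m^2)\mid r$ is exactly the CRT obstruction; the summand is genuinely multiplicative in (squarefree) $m$ because both $m\mapsto\gcd(N,m^2)$ and the indicator $[\gcd(N,m^2)\mid r]$ factor over the prime divisors of $m$; and your local-factor analysis at primes $q\mid N$ correctly isolates the role of the hypothesis that $s=\gcd(r,N)$ is squarefree (it kills the possibility $q^2\mid r$ when $v_q(N)\ge 2$, so no local factor vanishes). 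The final identity $\prod_{q\mid N,\,v_q(N)=1,\,q\mid s}(1-q^{-1})=\varphi(N)/(s\,\varphi(N/s))$ is also correct: both sides are multiplicative in the prime decomposition of $N$, and at each $q\mid N$ the right-hand side contributes $(1-q^{-1})$ precisely when $v_q(N)=v_q(s)$, which under the squarefreeness of $s$ means $v_q(N)=1$ and $q\mid r$.
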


\begin{theorem}(e.g. [Lan09,pp.641-643]).\label{Landau2}
Let $N$ be a positive integer.  Let $r_1,\dots,r_m$ be distinct modulo $N$, satisfying $\gcd(r_i,N)=1$.  Let $E$ be
the set of positive integers $d$ such that every prime factor $q$ of $d$ satisfies $q\equiv r_i \pmod N$ for some
$i=1,\dots k$.  Then there is some positive constant $\gamma=\gamma(N,r_1,\dots,r_m)$ such that
\[
 \#E(X)\sim\gamma\frac{X}{\log(X)^{1-m/\varphi(N)}}.
\]
\end{theorem}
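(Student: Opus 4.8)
The plan is to prove this classical statement by the Selberg--Delange method: study the Dirichlet series attached to $E$ and feed its behaviour near $s=1$ into a Tauberian theorem. Membership in $E$ is a multiplicative condition --- a positive integer lies in $E$ exactly when all of its prime factors lie in the set $P$ of primes congruent to one of $r_1,\dots,r_m$ modulo $N$ --- so the generating Dirichlet series factors as an Euler product
\[
F(s)\;=\;\sum_{d\in E}d^{-s}\;=\;\prod_{q\in P}\bigl(1-q^{-s}\bigr)^{-1},
\]
absolutely convergent for $\re(s)>1$.

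First I would locate the singularity of $F$ at $s=1$. Taking logarithms, $\log F(s)=\sum_{q\in P}q^{-s}+h_0(s)$, where $h_0$ (the prime-power contributions $q^k$, $k\ge 2$) is holomorphic for $\re(s)>1/2$. Expressing the indicator of $P$ through Dirichlet characters modulo $N$ by orthogonality gives $\sum_{q\in P}q^{-s}=\frac{1}{\varphi(N)}\sum_{\chi\bmod N}\bigl(\sum_{i=1}^{m}\overline{\chi}(r_i)\bigr)\sum_{q}\chi(q)q^{-s}$, and $\sum_q\chi(q)q^{-s}=\log L(s,\chi)+h_\chi(s)$ with $h_\chi$ holomorphic for $\re(s)>1/2$. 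For the principal character the coefficient $\sum_i\overline{\chi_0}(r_i)$ equals $m$ and $\log L(s,\chi_0)=\log\zeta(s)+O(1)$; for every non-principal $\chi$ one has $L(1,\chi)\neq 0$ by Dirichlet, and by the classical zero-free region $\log L(s,\chi)$ extends holomorphically, with growth $O(\log(2+|t|))$, to a region $\re(s)\ge 1-c/\log(2+|t|)$. Combining, $\log F(s)=\tfrac{m}{\varphi(N)}\log\tfrac{1}{s-1}+H(s)$ with $H$ holomorphic in such a region, hence
\[
F(s)=\frac{G(s)}{(s-1)^{m/\varphi(N)}},\qquad G(s):=\bigl((s-1)\zeta(s)\bigr)^{m/\varphi(N)}e^{H(s)},
\]
where $G$ is holomorphic and non-vanishing near $s=1$, continues with at most polynomial vertical growth to the same region, and satisfies $G(1)=e^{H(1)}>0$.

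Next I would invoke a Tauberian theorem of Selberg--Delange/Landau type (e.g.\ as in Tenenbaum's \emph{Introduction to analytic and probabilistic number theory}, or Landau's original papers): if $F(s)=\sum_{d\ge 1}a_d d^{-s}$ has non-negative coefficients and a representation $F(s)=(s-1)^{-\alpha}G(s)$ with $G$ holomorphic and of polynomial vertical growth in a region $\re(s)\ge 1-c/\log(2+|t|)$, then $\sum_{d\le X}a_d\sim \frac{G(1)}{\Gamma(\alpha)}\,X(\log X)^{\alpha-1}$. Here $a_d=\mathbf 1_{d\in E}\ge 0$ and $\alpha=m/\varphi(N)\in(0,1]$ (the $r_i$ being distinct residues coprime to $N$), so $\#E(X)\sim\gamma\,X/(\log X)^{1-m/\varphi(N)}$ with $\gamma=e^{H(1)}/\Gamma(m/\varphi(N))>0$; since $H$, hence $\gamma$, is built only from $\zeta$, the characters modulo $N$, and the residues $r_i$, the constant depends only on $N$ and $r_1,\dots,r_m$, as claimed.

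I expect the only delicate point to be the analytic input for the Tauberian step --- carrying the continuation of $F$ (equivalently of $\zeta$ and the $L(s,\chi)$) slightly to the left of $\re(s)=1$ with polynomial growth on vertical lines --- which is supplied by the standard zero-free regions and convexity estimates; the character bookkeeping and the extraction of $\gamma$ are then routine. An alternative that avoids contour integration (and any concern about exceptional zeros) is to apply Wirsing's mean-value theorem for non-negative multiplicative functions to $\mathbf 1_E$, whose only nontrivial input is $\sum_{q\le X,\,q\in E}\log q\sim \frac{m}{\varphi(N)}X$ from the prime number theorem in arithmetic progressions; this yields the same asymptotic and the same constant.
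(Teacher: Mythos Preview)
Your proposal is correct, but note that the paper does not prove this theorem at all: it is quoted as a classical result with a citation to Landau's \emph{Handbuch} [Lan09, pp.~641--643], and is used as a black box in the density computations of Section~9. So there is no ``paper's own proof'' to compare against.

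That said, your Selberg--Delange argument is the standard modern route to such asymptotics and is essentially a streamlined version of what Landau does in the cited pages: one factors the Dirichlet series of $\mathbf 1_E$ as an Euler product over the primes in the prescribed residue classes, uses orthogonality of characters modulo $N$ to isolate a factor $(s-1)^{-m/\varphi(N)}$ at $s=1$ (the non-principal $L$-functions being holomorphic and non-vanishing there), and then extracts the asymptotic via a Tauberian theorem. Your identification of the constant $\gamma=e^{H(1)}/\Gamma(m/\varphi(N))$ and the observation that it depends only on $N$ and the $r_i$ are both correct. The Wirsing alternative you mention is also valid and, as you note, sidesteps any contour-integration subtleties. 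One tiny slip: in the last line you wrote $\sum_{q\le X,\,q\in E}\log q$ where you meant $q\in P$ (primes in the allowed classes), but the intent is clear.
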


\begin{remark} \label{densityremark}Observe that if we apply Theorem \ref{Landau1} to $\N^{sf}=\N_{0,1}^{sf}$, we obtain
$\#\N^{sf}(X)\sim 6X/\pi^2$.
Moreover, for a subset $\calU$ of $\N^{sf}$, $\delta(\calU)$ exists if and only if $\delta_{rel}(\calU)$ exists and
$\delta_{rel}(\calU)=\pi^2\delta(\calU)/6$.
\end{remark}
We would like to prove the following theorem.
\begin{theorem}\label{relativedensity}
 Let $\calC$ be the set of $d \in \N^{sf}$ such that  the $S$-unit equation \ref{uniteqn} has no relevant solutions in $\Q(\sqrt{-d})$ 
 and $ \calD=\{d\in \calC : -d \not \equiv 5 \pmod 8\}.$ 
 Then $\delta_{rel}(\calC)=1$ and $\delta_{rel}(\calD)=5/6$.

\end{theorem}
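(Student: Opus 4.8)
The plan is to reduce Theorem~\ref{relativedensity} to the single assertion that the exceptional set $\calC^{c}:=\N^{sf}\setminus\calC$ of squarefree $d\ge 2$ for which \eqref{uniteqn} \emph{does} admit a relevant solution in $\Q(\sqrt{-d})$ has natural density zero; granting this, the rest is bookkeeping with Theorem~\ref{Landau1} and Remark~\ref{densityremark}. I would first split $\N^{sf}$ according to the splitting type of $2$ in $K=\Q(\sqrt{-d})$: the prime $2$ ramifies when $d\equiv 1,2\pmod 4$, is inert when $d\equiv 3\pmod 8$, and splits when $d\equiv 7\pmod 8$. Discarding the finitely many $d$ for which $\calO_{K}$ has extra roots of unity, the unit group of $\calO_K$ is $\{\pm 1\}$; in the first two cases $S$ consists of a single prime $\frakP$, which is non-principal in the ramified case (the form $x^{2}+dy^{2}=2$ is unsolvable for $d\ge 3$) and equals $(2)$ in the inert case, so in both cases $\calO_{S}^{*}=\{\pm 2^{n}:n\in\Z\}$. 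Solving $\lambda+\mu=1$ directly inside this group produces only the pairs in the $\frakS_{3}$-orbit of $\{(-1,2),(2,-1),(1/2,1/2)\}$, i.e. only irrelevant solutions, so every such $d$ lies in $\calC$. Hence, up to a finite set, $\calC^{c}\subseteq\{d\equiv 7\pmod 8\}$.

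For the split case $d\equiv 7\pmod 8$ I would invoke Lemma~\ref{relevantelts}: up to the $\frakS_{3}$-action, a relevant solution forces $dv^{2}=2^{r+2}-1$ for some $r\ge 3$ and odd $v$, together with the three listed exceptional configurations which contribute only finitely many $d$ (for instance $d=7$). Thus, up to a finite set, $\calC^{c}$ is contained in the set $\calE$ of squarefree parts of the integers $2^{n}-1$, $n\ge 1$. The heart of the argument is then $\delta(\calE)=0$. I would prove this by a sparsity estimate: distinct elements of $\calE$ come from distinct exponents $n$, so $\#\bigl(\calE\cap[1,X]\bigr)\le\#\{n:\ \text{the squarefree part of }2^{n}-1\text{ is }\le X\}$, and this last count is $o(X)$ — indeed $O\bigl((\log X)^{c}\bigr)$ — because for exponents $m$ with few factors of $2$ every prime divisor of the squarefree part of $2^{m}-1$ lies in a proper union of residue classes modulo $8$ (e.g. the prime divisors of $2^{m}-1$ with $m$ odd are $\equiv\pm 1\pmod 8$), whence Theorem~\ref{Landau2} applies, while the remaining exponents contribute only finitely many $d$ for each value of $r$ and boundedly many $r$ for each fixed $d$ (a Pell / Lucas-sequence phenomenon). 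This is the analogue for imaginary quadratic fields of the density computation in \cite{FS}, which I would follow for the details.

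Granting $\delta(\calE)=0$, I would conclude by computing relative densities. By Theorem~\ref{Landau1} and Remark~\ref{densityremark}, among squarefree integers the classes $d\equiv 1\pmod 4$, $d\equiv 2\pmod 4$, $d\equiv 3\pmod 8$, $d\equiv 7\pmod 8$ have relative densities $1/3$, $1/3$, $1/6$, $1/6$. Since (up to a finite set) $\calC$ contains the first three classes in full while $\calC^{c}$ sits inside the fourth class and has relative density $0$, we get $\delta_{rel}(\calC)=1/3+1/3+1/6+1/6=1$. For $\calD$ one removes the class $d\equiv 3\pmod 8$ (equivalently $-d\equiv 5\pmod 8$), which has relative density $1/6$ and lies entirely in $\calC$, giving $\delta_{rel}(\calD)=1-1/6=5/6$.

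The main obstacle is exactly the statement $\delta(\calE)=0$: bounding from below the squarefree part of $2^{n}-1$ — equivalently, controlling how large a square can divide $2^{n}-1$ — is genuinely delicate, and the argument must combine the congruence restrictions on prime divisors (handled by Theorem~\ref{Landau2}) for ``most'' exponents with a separate sparsity input for the exceptional ones, precisely as in \cite{FS}.
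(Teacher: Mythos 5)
Your overall plan matches the paper's: reduce Theorem~\ref{relativedensity} to showing $\delta(\calC')=0$ where $\calC'=\N^{sf}\setminus\calC$ (this is Lemma~\ref{complementlemma}), use Lemma~\ref{relevantelts} to identify $\calC'$ (up to finitely many exceptions) with squarefree parts of $2^{r+2}-1$ for $r\ge 3$ --- only $d\equiv 7\pmod 8$ contributes, since Lemma~\ref{relevantelts} already rules out relevant solutions when $-d\equiv 2,3\pmod 4$ or $-d\equiv 5\pmod 8$, making your explicit split-by-ramification-type a harmless rephrasing --- then split on the parity of $r$, handle odd $r$ via quadratic-residue restrictions on prime divisors of $d$ together with Theorem~\ref{Landau2}, and finish the bookkeeping with Theorem~\ref{Landau1} and Remark~\ref{densityremark}. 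Your final arithmetic $\delta_{rel}(\calC)=1$ and $\delta_{rel}(\calD)=5/6$ is correct.

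The problem is your treatment of the even-$r$ case, which is the genuinely hard half of Lemma~\ref{complementlemma}. You say these exponents are handled because each $r$ produces one $d$ and each $d$ arises from boundedly many $r$ (``a Pell / Lucas-sequence phenomenon''), and you assert $\#\calE(X)=O((\log X)^c)$. Neither claim yields $\delta(\calE)=0$. Bounded multiplicity in both directions says nothing about how small the squarefree part of $2^{r+2}-1$ can be, so a priori arbitrarily many exponents could produce $d\le X$; and a polylogarithmic bound on $\#\calE(X)$ would amount to an unconditional lower bound on the squarefree part of $2^n-1$, which is an ABC-strength statement that is not available. What the paper (following \cite{FS}) actually does for $r=2s$ is factor $2^{r+2}-1=(2^{s+1}-1)(2^{s+1}+1)=\alpha_{2,s}\,\alpha_{1,s}$ and run a counting argument modulo Mersenne numbers $M_m=2^m-1$ (Lemmas~\ref{Mersenne}--\ref{approx2}): fixing $s\pmod m$, the squarefree parts $d_{1,s}$ are confined to $\varphi(N)/2^k$ residue classes modulo $N$, where $N$ is built from $k$ of the primes dividing $M_m$, giving $\delta_{sup}(\calC'(\kappa_1))\le 2^{-h_m/2}m$ with $h_m=\omega(M_m)$; one then takes $m=\prod_{p\le y}p$, uses $h_m\ge 2^{\omega(m)}-2$, and lets $y\to\infty$ via the prime number theorem. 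This Mersenne-counting step is the heart of the lemma, and it is not recovered by the sparsity heuristic you describe.
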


In order to prove Theorem \ref{relativedensity}, we need the following lemma.

\begin{lemma}\label{complementlemma}
 Let $\calC'=\N^{sf}\backslash \calC$.  Then $\delta(\calC')=0$.
\end{lemma}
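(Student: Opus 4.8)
The plan is to show that the ``bad'' set $\calC'$ of squarefree $d\geq 2$ for which the $S$-unit equation $\lambda+\mu=1$ has a relevant solution in $\Q(\sqrt{-d})$ has absolute density zero. The key observation is that a relevant solution forces $d$ to lie in a very thin set, because relevance comes with a parametrization of the type established in Lemmas \ref{parametrization} and \ref{relevantelts}: up to the $\frakS_3$-action, a relevant $(\lambda,\mu)$ produces an equation of the shape $X^2+dv^2=2^{a}$ (possibly with an extra prime $q$ from $S$, but for Density Theorem 3 we only need the case $S=\{\frakP_1,\frakP_2\}$ or $S=\{\frakP\}$ coming from the rational Fermat equation, so $S$ depends only on $2$). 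First I would split according to the splitting behaviour of $2$ in $\Q(\sqrt{-d})$: the cases $-d\equiv 1\pmod 8$ (2 splits), $-d\equiv 5\pmod 8$ (2 inert), and $-d\equiv 2,3\pmod 4$ (2 ramified). In the inert case $S=\emptyset$ except for what $2$ contributes, and one checks directly there are no relevant solutions; in the remaining two cases, a relevant solution gives an identity $N^2 + d v^2 = 2^{m}$ with $N$ odd, $v\neq 0$, and $m$ bounded in terms of $r_1,r_2$ but a priori unbounded overall.

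Next I would fix $v$ and the exponent data and count. For each pair $(N,v,m)$ with $N^2+dv^2=2^m$ we get $d = (2^m - N^2)/v^2$, so $d$ is determined by $(N,v,m)$. The number of such $d\leq X$ with a relevant solution is therefore at most the number of triples $(N,v,m)$ with $2^m \leq N^2 + v^2 X \cdot(\text{bounded})$ and $d=(2^m-N^2)/v^2 \leq X$; the heart of the estimate is that for fixed $m$ the equation $N^2 + dv^2 = 2^m$ has, for each $d$, at most $O_\epsilon(2^{\epsilon m})$ solutions in $(N,v)$ (divisor-type bound on representations of $2^m$ by the form $x^2+dy^2$), while $m \leq \log_2(2 + v^2 X) = O(\log X)$ since $v^2 \leq 2^m \leq N^2+dv^2$ and also $d\geq 1$ forces $2^m \leq (1+d)v^2 + \dots$. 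Summing, $\#\calC'(X) = O(X^{1/2}\log X)$ or at worst $O(X^{1-\delta})$, which is $o(X)$, giving $\delta(\calC')=0$. In fact it is cleaner to argue: each relevant solution yields $2^{r+2} = 1 + dv^2$ (in the split case, from Lemma \ref{relevantelts}) or an analogous bounded-length relation, so $d = (2^{r+2}-1)/v^2$; the number of $(r,v)$ with this $\leq X$ is $O(\sqrt X)$ since $v \leq \sqrt{2^{r+2}/d}\leq \sqrt{2^{r+2}}$ and $2^{r+2}\leq 1 + dX v^2$-type bounds pin down $r = O(\log X)$ for each $v$, and $v$ ranges over $O(\sqrt X)$ values. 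Either way $\#\calC'(X)=o(X)$.

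The main obstacle I anticipate is bookkeeping the $\frakS_3$-orbit reduction uniformly: a relevant solution is only guaranteed to have the nice parametrized form after applying some $\sigma\in\frakS_3$, and I must make sure that ``$d$ has a relevant solution'' really does imply ``$d$ appears in one of finitely many explicitly parametrized families'', each of which is thin. This is exactly what Lemmas \ref{unitsareintegral}, \ref{rationalunits}, \ref{parametrization} and \ref{relevantelts} are designed to give, so the argument reduces to invoking them and then performing the elementary counting above. A secondary technical point is handling the prime $q$ in $S$ for the generalized (non-rational) setting, but for Density Theorem 3 the relevant $S$ is the one attached to $x^p+y^p+z^p=0$, so $S$ is supported only above $2$ and this issue does not arise; I would remark on this explicitly. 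Once $\delta(\calC')=0$ is in hand, $\delta_{rel}(\calC)=1$ follows from Remark \ref{densityremark}, and $\delta_{rel}(\calD)=5/6$ follows by intersecting with the congruence condition $-d\not\equiv 5\pmod 8$ and applying Theorem \ref{Landau1} to compute the density of that congruence class among squarefree integers (it is $5/6$, since $-d\equiv 5 \pmod 8$ accounts for density $1/6$ of squarefree $d$).
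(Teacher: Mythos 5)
Your parametrization step is correct and matches the paper: by Lemmas \ref{unitsareintegral}, \ref{rationalunits}, and \ref{relevantelts}, a relevant solution forces $-d\equiv 1\pmod 8$ and $dv^2 = 2^{r+2}-1$ for some $r\geq 3$ and odd $v$, so that $d$ is the squarefree kernel of $2^{r+2}-1$ and $\calC'$ is contained in the image of the single sequence $r\mapsto d(r)$. However, the counting argument you then sketch does not go through, and this is the heart of the lemma, not a routine step.

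The claim that ``the number of $(r,v)$ with $d\leq X$ is $O(\sqrt X)$'' is false as stated: the constraint $(2^{r+2}-1)/v^2\leq X$ defines an unbounded region in the $(r,v)$-plane, since for every $r$ one can take $v$ of size roughly $2^{(r+2)/2}/\sqrt{X}$ and still land below $X$. Neither of your auxiliary inequalities bounds $v$, because $v\leq\sqrt{2^{r+2}}$ is vacuous when $r$ is unbounded, and $2^{r+2}\leq 1+Xv^2$ only gives $r=O(\log X+\log v)$, not $r=O(\log X)$. The correct observation is that there is exactly one $d(r)$ per $r$, so $\#\calC'(X)\leq\#\{r:d(r)\leq X\}$; but bounding this requires ruling out the possibility that $2^{r+2}-1$ has a very large square factor (equivalently a very small squarefree kernel) for many $r$, and that is precisely the hard part. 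The divisor-bound observation you mention controls, for fixed $d$ and $m$, the number of representations $N^2+dv^2=2^m$, which is the wrong direction: it does not control how many distinct $d$ arise.

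The paper handles this with a genuinely analytic argument. It splits on the parity of $r$. When $r$ is odd, $2^{r+2}\equiv 1\pmod q$ for every odd prime $q\mid d$ with $r+2$ odd forces $2$ to be a quadratic residue mod $q$, so every prime factor of $d$ lies in $\{\pm 1\bmod 8\}$; Theorem~\ref{Landau2} then gives density zero for this family. When $r=2s$ is even, the factorization $(2^{s+1}-1)(2^{s+1}+1)=dv^2$ into coprime factors reduces to controlling the squarefree kernels $d_{1,s},d_{2,s}$, and the paper uses congruences of $\alpha_{i,s}$ modulo Mersenne numbers $M_m$ (Lemmas \ref{Mersenne}--\ref{approx2}) together with the growth of $\omega(M_m)$ and the prime number theorem to push the upper density to zero. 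None of this structure appears in your sketch, and there is no elementary replacement of the kind you propose, so the gap is genuine.

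Your concluding remarks on deducing $\delta_{rel}(\calC)=1$ and $\delta_{rel}(\calD)=5/6$ from $\delta(\calC')=0$ via Remark~\ref{densityremark} and Theorem~\ref{Landau1} are fine, but they belong to the proof of Theorem~\ref{relativedensity}, not to the proof of this lemma.
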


By assuming Lemma \ref{complementlemma}, we can prove Theorem \ref{relativedensity} as below.

\begin{proof4} By Lemma \ref{complementlemma} and Remark \ref{densityremark}, we have 
$\delta_{rel}(\calC')=\delta(\calC')=0$.
The fact that $\calC$ and $\calC'$ are complements in $\N^{sf}$ yields $\delta_{rel}(\calC)=1$. 

Note that $\calD=\calC \cap (\N^{sf}-\N_{3,8}^{sf})$.  By Theorem \ref{Landau2}, $\#\N_{5,8}^{sf}\sim X/\pi^2$
and by Remark \ref{densityremark} $\#\N^{sf}\sim 6X/\pi^2$. Hence, $\delta_{rel}(\calD)=5/6$.
\end{proof4}

 \subsection{Proof of Lemma \ref{complementlemma}}
 In this section, we give the proof of Lemma \ref{complementlemma}.  Throughout this section, we follow the proof of Lemma 7.1
 of \cite{FS}.  As we deal with the imaginary quadratic fields, we use a simplified version of their argument.
 
 Being the complement of $\calC$, the set $\calC'$ consists of squarefree $d\geq2$ such that the $S$-unit equation 
\ref{uniteqn} has a relevant solution in $\Q(\sqrt{-d})$.  By Lemma \ref{relevantelts}, this set exactly contains
squarefree $d\geq 2$ satisfying $1-2^{r+2}=-dv^2$ where $r\geq 3$ and $v$ is an odd integer.  Now, for $\kappa_{1}=0$ and
$\kappa_{2}=1$, we define $\calC'(\kappa_1)$ and $\calC'(\kappa_2)$ as the sets containing squarefree $d\geq 2$ satisfying
$1-2^{r+2}=-dv^2$ where $r\geq 3$ and $v$ is an odd integer with $r\equiv \kappa_1$ and $\kappa_2 \pmod 2$ respectively.  Hence,
$\calC'=\calC'(\kappa_1)\cup\calC'(\kappa_2)$.  Assume $q$ is an odd divisor of $d$ and let $d\in \calC'(\kappa_2)$.  
Then reducing $1-2^{r+2}$ modulo $q$ implies that $2$ is a quadratic residue modulo $q$.  In this case, $q$
is an element of a proper subset of the congruence classes $\{\bar{1},\bar{3},\bar{5},\bar{7}\}$ modulo $8$.  Now by using
Theorem \ref{Landau2} (note that in this case, the power of the logarithm in the theorem becomes less than 1), we
see that $\delta(\calC'(\kappa_2))=0$

Now we will show that $\delta(\calC'(\kappa_1))=0$. Assume $d\in \calC'(\kappa_1)$ and set $r=2s, s\geq 2$.  The condition for
the relevant solutions becomes: $(2^{s+1}+1)(2^{s+1}-1)=dv^2. $

Let  $ \alpha_{1,s}=2^{s+1}+1\mbox{ and } \alpha_{2,s}=2^{s+1}-1.$ Obviously, $\alpha_{1,s}$ and $\alpha_{2,s}$ are odd and coprime integers.  Therefore, we can write
$\alpha_{i,s}=d_{i,s}v_{i,s}^2$ where $d_{i,s}$ are squarefree and $d=\prod d_{i,s}$.  Hence,
\[
 \calC'(\kappa_1)=\{d_{1,s}d_{2,s}:s\geq 2\}.
\]
 Instead showing $\delta(\calC'(\kappa_1))=0$, we will prove the equivalent statement that 
 \[
  \delta_{sup}(\calC'(\kappa_1))=\limsup_{X\to\infty}\#\calC'(\kappa_1)(X)/X=0.
 \]
 The $m^{\mbox{th}}$ Mersenne number $M_m$ is defined to be $M_m=2^m-1$ for any positive integer $m$.  Note that
 if $n|m$, then $M_n|M_m$.
 
 \begin{lemma}\label{Mersenne}
  Let $m$ be a positive integer.  Let $M_m=2^m-1$.  Assume that $s_1\equiv s_2 \pmod m$.  Then
  $\alpha_{1,s}\equiv \alpha_{2,s} \pmod {M_m}$ for $i=1,2$.
 \end{lemma}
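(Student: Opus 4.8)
The plan is to reduce the whole statement to the single elementary fact that $2$ has multiplicative order dividing $m$ modulo $M_m$. First I would note that $M_m = 2^m-1$ is odd, so $2$ is a unit in $\Z/M_m\Z$, and that by definition $2^m \equiv 1 \pmod{M_m}$; consequently the order of $2$ modulo $M_m$ divides $m$. (I also read the conclusion as the intended $\alpha_{i,s_1}\equiv\alpha_{i,s_2}\pmod{M_m}$ for $i=1,2$, i.e.\ the two values of each $\alpha_i$ at the congruent exponents agree mod $M_m$.)

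Given $s_1 \equiv s_2 \pmod m$, assume without loss of generality $s_1 \le s_2$ and write $s_2 = s_1 + km$ with $k \ge 0$. Then
\[
2^{s_2+1} - 2^{s_1+1} = 2^{s_1+1}\bigl(2^{km}-1\bigr),
\]
and the telescoping identity $2^{km}-1 = (2^m-1)\bigl(2^{(k-1)m}+2^{(k-2)m}+\cdots+1\bigr)$ shows $M_m = 2^m-1 \mid 2^{km}-1$, hence $M_m \mid 2^{s_2+1}-2^{s_1+1}$. Therefore $2^{s_1+1} \equiv 2^{s_2+1} \pmod{M_m}$. Adding $1$ to both sides gives $\alpha_{1,s_1} = 2^{s_1+1}+1 \equiv 2^{s_2+1}+1 = \alpha_{1,s_2} \pmod{M_m}$, and subtracting $1$ gives the corresponding congruence for $\alpha_{2}$. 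This proves the lemma.

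There is essentially no obstacle here: the only point worth a word of care is that the argument requires no lower bound on $s_1, s_2$ beyond nonnegativity (since $2$ is a unit mod $M_m$, equivalently since $2^{s_1+1}$ can simply be factored out), and that one should be explicit that it is $\alpha_{i,s_1}$ and $\alpha_{i,s_2}$ that are being compared. This periodicity statement is exactly what is needed afterwards: it lets one partition the exponents $s \ge 2$ into residue classes modulo a suitable $m$, observe that within each class the values $d_{1,s}d_{2,s} = \alpha_{1,s}\alpha_{2,s}/(v_{1,s}v_{2,s})^2$ are forced into a fixed residue class modulo $M_m$ (where $M_m$ can be chosen to have a prime factor for which $2$ is a non-residue, or otherwise controlled), and then invoke Theorem \ref{Landau2} to conclude $\delta_{sup}(\calC'(\kappa_1)) = 0$.
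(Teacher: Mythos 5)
Your proof is correct and follows essentially the same route as the paper: factor $\alpha_{i,s_1}-\alpha_{i,s_2}=2^{\min(s_1,s_2)+1}(2^{|s_1-s_2|}-1)$ and use $M_m\mid 2^{km}-1$. You also rightly read the statement's conclusion as the intended $\alpha_{i,s_1}\equiv\alpha_{i,s_2}\pmod{M_m}$, which matches how the lemma is applied.
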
 
  \begin{proof} Assume without loss of generality that  $s_1>s_2$.  Since $s_1\equiv s_2 \pmod m$, $m|(s_1-s_2)$. Then,
  $ \alpha_{i,s_1}-\alpha_{i,s_2}=2^{s_1+1}-2^{s_2+1}=2^{s_2+1}(2^{s_1-s_2}-1). $
  Since $m|(s_1-s_2)$ we get $M_m|(\alpha_{i,s_1}-\alpha_{i,s_2})$.
   
  \end{proof}

 \begin{lemma}\label{approx1}
   Let $m$ be a positive integer and $s_0\geq 2$ and  $\calA_{m,s_0}=\{d_{1,s} : s\equiv s_0\pmod m \}. $
 Let $p_1,\dots,p_k$ be distict primes dividing $M_m$ that do not divide $\alpha_{1,s_0},$ and write
 $N=p_1 \dots p_k$.  Then
 \[
  \# \calA_{m,s_0}(X)\leq 2^{-k}X+N.
 \]
 \end{lemma}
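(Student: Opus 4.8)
The plan is to turn the estimate into a counting problem over residue classes modulo $N$: I will use Lemma~\ref{Mersenne} to control $\alpha_{1,s}$ modulo the primes $p_j$, observe that this pins $d_{1,s}$ down to a quadratic-residue coset modulo each $p_j$, and then run an elementary interval count. First I would fix $s$ with $s \equiv s_0 \pmod m$ and record that each $p_j$ is odd, being a divisor of the odd number $M_m = 2^m - 1$. By Lemma~\ref{Mersenne}, $\alpha_{1,s} \equiv \alpha_{1,s_0} \pmod{M_m}$, hence $\alpha_{1,s} \equiv \alpha_{1,s_0} \pmod{p_j}$ for every $j$. Since $p_j \nmid \alpha_{1,s_0}$ by hypothesis, this forces $p_j \nmid \alpha_{1,s}$, so $p_j$ divides neither the squarefree part $d_{1,s}$ nor $v_{1,s}$, where $\alpha_{1,s} = d_{1,s} v_{1,s}^2$.

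Next, the relation $d_{1,s} v_{1,s}^2 \equiv \alpha_{1,s_0} \pmod{p_j}$, read in $\mathbb{F}_{p_j}^\times$, shows that $d_{1,s}$ lies in the coset $\alpha_{1,s_0}\cdot(\mathbb{F}_{p_j}^\times)^2$ of the subgroup of squares, which is a union of exactly $(p_j-1)/2$ residue classes modulo $p_j$. As the $p_j$ are distinct and $N = p_1\cdots p_k$, the Chinese Remainder Theorem shows that $d_{1,s} \bmod N$ is constrained to a fixed set $\mathcal{R}\subseteq\mathbb{Z}/N\mathbb{Z}$, independent of $s$, with
\[
|\mathcal{R}| = \prod_{j=1}^{k}\frac{p_j-1}{2} \le \prod_{j=1}^{k}\frac{p_j}{2} = \frac{N}{2^k}.
\]

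Finally I would run the count: every element of $\calA_{m,s_0}(X)$ is a positive integer $d\le X$ with $d\bmod N\in\mathcal{R}$, and each residue class modulo $N$ meets $[1,X]$ in at most $\lfloor X/N\rfloor + 1 \le X/N + 1$ integers, so
\[
\#\calA_{m,s_0}(X) \le |\mathcal{R}|\left(\frac{X}{N}+1\right) \le \frac{N}{2^k}\left(\frac{X}{N}+1\right) = 2^{-k}X + \frac{N}{2^k} \le 2^{-k}X + N .
\]
I do not expect a genuine obstacle here: the conceptual point is simply that fixing $s$ modulo $m$ pins down $\alpha_{1,s}$ modulo every prime divisor of $M_m$, which through the quadratic-residue coset forces $d_{1,s}$ into a set of residues of density below $2^{-k}$. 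The only things needing care are checking that the $p_j$ are odd (so the squares form an index-$2$ subgroup) and keeping the additive error term at $N$.
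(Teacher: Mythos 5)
Your proof is correct and is essentially the same argument as in the paper: apply Lemma~\ref{Mersenne} to get $\alpha_{1,s}\equiv\alpha_{1,s_0}\pmod{N}$, deduce coprimality of $\alpha_{1,s}$ (hence of $d_{1,s}$ and $v_{1,s}$) with $N$, observe that $d_{1,s}$ is then confined to the coset $\alpha_{1,s_0}\cdot\bigl((\Z/N\Z)^*\bigr)^2$, which has index $2^k$ because $N$ is squarefree with $k$ odd prime factors, and finish with the elementary interval count. The only cosmetic difference is that you phrase the index computation prime-by-prime via CRT, while the paper records it directly as $\varphi(N)/2^k$; your explicit remark that the $p_j$ are odd (so squares really have index $2$ in $\F_{p_j}^\times$) is a small but welcome addition.
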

\begin{proof}
 Assume $s\equiv s_0 \pmod m$.  Since $N|M_n$, $\alpha_{1,s}\equiv \alpha_{1,s_0}$ by Lemma \ref{Mersenne}.  Moreover,
 $\alpha_{1,s}$ is coprime to $N$ as $\alpha_{1,s_0}$ is coprime to $N$.  Hence, $v_{1,s}^2$ is coprime to $N$ where
 $\alpha_{i,s}=d_{i,s}v_{i,s}^2$.  By reducing modulo $N$ we obtain $d_{1,s}\equiv \alpha_{1,s_0}v_{1,s}^{-2} \pmod N$,
 i.e. $d_{1,s}\in\{\alpha_{1,s_0}\omega^2|\omega\in(\Z/N\Z)^*\}$.  This set has cardinality $\varphi(N)/2^k$ since $N$
 is squarefree with $k$ distinct prime factors.  Then the lemma follows.
\end{proof}

Let us denote by $\omega(n)$ the number of distinct prime divisors of a positive integer $n$.

\begin{lemma}\label{approx2}
 For $m\geq 1$, let $h_m=\omega(M_m)$.  Then $\delta_{sup}(\calC'(\kappa_1))\leq 2^{-h_m/2}m$.
\end{lemma}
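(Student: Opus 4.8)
The plan is to split the index set $\{s\ge 2\}$ into the $m$ residue classes modulo $m$ and bound the upper density of each piece separately. Since upper density is finitely subadditive and $\calC'(\kappa_1)=\bigcup_{s_0}\calD_{s_0}$, where $\calD_{s_0}=\{d_{1,s}d_{2,s}:s\equiv s_0\pmod m\}$ and the union runs over a set of $m$ representatives $s_0$, it suffices to show that each $\calD_{s_0}$ has upper density at most $2^{-h_m/2}$ and then sum the $m$ contributions. The reason the exponent is $h_m/2$ rather than $h_m$ is the following dichotomy: $\alpha_{1,s_0}$ and $\alpha_{2,s_0}$ differ by $2$ and are odd, hence coprime, so each of the $h_m$ primes dividing $M_m$ divides at most one of them; thus, writing $k_i$ for the number of primes of $M_m$ coprime to $\alpha_{i,s_0}$, we have $k_1+k_2\ge h_m$, and after relabelling we may assume $k_1\ge h_m/2$. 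Put $P=\{p\mid M_m:p\nmid\alpha_{1,s_0}\}$, so $|P|=k_1$, and $N=\prod_{p\in P}p$.

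Now fix such an $s_0$. By Lemma \ref{Mersenne}, $\alpha_{1,s}\equiv\alpha_{1,s_0}\pmod{M_m}$, a fortiori modulo $N$, for every $s\equiv s_0\pmod m$; since $\gcd(\alpha_{1,s_0},N)=1$ this forces $\gcd(v_{1,s},N)=1$ and hence $d_{1,s}\equiv\alpha_{1,s_0}v_{1,s}^{-2}\pmod N$, so that $d_{1,s}$ lies in the single square class $\{\alpha_{1,s_0}\omega^2:\omega\in(\Z/N\Z)^*\}$ of cardinality $\varphi(N)/2^{k_1}$ — this is precisely the input of Lemma \ref{approx1}. To transfer the constraint from the factor $d_{1,s}$ to the product $d=d_{1,s}d_{2,s}$ one runs the argument of Lemma 7.1 of \cite{FS}: for each $p\in P$ we have $p\nmid d_{1,s}$ and $d_{1,s}$ is a fixed square times $\alpha_{1,s_0}$ modulo $p$, while $d_{2,s}$ modulo $p$ is controlled by a short $p$-adic valuation analysis of $\alpha_{2,s}=2^{s+1}-1$ via lifting the exponent, which pins down — outside finitely many thinner subprogressions of $s$ — whether $p\mid d_{2,s}$ (in which case $d\equiv 0\pmod p$) and, if not, the square class of $d_{2,s}$ after dividing out the $p$-part; in every case $d$ is then confined modulo $p$ either to $\{0\}$ or to a single square class, hence to at most $(p-1)/2$ residues. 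Intersecting over the $k_1\ge h_m/2$ primes of $P$ confines the main part of $\calD_{s_0}$ modulo $N$ to at most $\prod_{p\in P}\frac{p-1}{2}=2^{-k_1}\varphi(N)$ residue classes, so that part has upper density $\le 2^{-k_1}\le 2^{-h_m/2}$; the subprogressions discarded along the way are each thinner by a factor equal to a prime of $M_m$, so iterating the same argument on them (or applying Lemma \ref{approx1} to them directly) makes their total contribution negligible. Hence $\delta_{sup}(\calD_{s_0})\le 2^{-h_m/2}$, and summing over the $m$ residue classes yields $\delta_{sup}(\calC'(\kappa_1))\le 2^{-h_m/2}m$.

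The step I expect to require the most care is precisely this transfer from the factor $d_{1,s}$ to the product $d$: controlling $v_p(\alpha_{2,s})\bmod 2$ for the primes $p\mid\gcd(M_m,\alpha_{2,s_0})$ through lifting the exponent, and organising the recursion over the resulting thinner subprogressions so that their contribution is genuinely absorbed into the error term. Everything else — the partition into residue classes, the confinement of $d_{1,s}$ to a square class modulo $N$, and the final summation — is a direct combination of Lemmas \ref{Mersenne} and \ref{approx1} together with the coprimality of $\alpha_{1,s_0}$ and $\alpha_{2,s_0}$, which is what supplies the $\ge h_m/2$ usable primes.
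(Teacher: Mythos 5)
Your proposal starts the same way as the paper --- split $\calC'(\kappa_1)$ into $m$ residue classes $s \equiv s_0 \pmod m$ and exploit the coprimality of $\alpha_{1,s_0}$ and $\alpha_{2,s_0}$ to guarantee at least $h_m/2$ usable primes of $M_m$ --- but then it veers onto a much harder and unfinished road. The crux of your argument is the ``transfer'' of the congruence constraint from $d_{1,s}$ to the product $d = d_{1,s}d_{2,s}$, which you attempt by also constraining $d_{2,s}$ modulo each $p \in P$ via lifting the exponent and a recursion over ``thinner subprogressions.'' This is only sketched and is precisely where the proof would collapse: for each $p \in P$ you must split according to whether $p \mid \alpha_{2,s_0}$, and when it does, control the parity of $v_p(2^{s+1}-1)$ along $s \equiv s_0 \pmod m$; lifting the exponent then pushes you into subprogressions $s \equiv s_0' \pmod{mp^j}$ of unbounded modulus, and no bookkeeping is given that shows their total contribution is negligible, let alone that it still produces the clean constant $2^{-h_m/2}$. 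Moreover, allowing $d \equiv 0 \pmod p$ as a possibility confines $d$ to $(p+1)/2$ residues mod $p$, not $(p-1)/2$, which already erodes the factor of two per prime that the argument is supposed to gain.

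The key observation you are missing is that no transfer is needed. Since $d_{2,s} \geq 1$, any $d = d_{1,s}d_{2,s} \leq X$ forces $d_{1,s} \leq X$, so $\#\calC'(\kappa_1)_{m,s_0}(X) \leq \#\calA_{m,s_0}(X)$ directly --- one never controls $d_{2,s}$, nor the residue class of the product. Lemma \ref{approx1} then gives $\#\calC'(\kappa_1)_{m,s_0}(X) \leq 2^{-k}X + M_m$; the same argument with $\alpha_{2,s_0}$ in place of $\alpha_{1,s_0}$ gives $\leq 2^{-k'}X + M_m$; since $\alpha_{1,s_0}$ and $\alpha_{2,s_0}$ are coprime, $k + k' \geq h_m$, so at least one of $k,k'$ is $\geq h_m/2$. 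Summing over the $m$ residue classes gives $\#\calC'(\kappa_1)(X) \leq 2^{-h_m/2}mX + mM_m$, hence $\delta_{sup}(\calC'(\kappa_1)) \leq 2^{-h_m/2}m$. I would rebuild your proof around this shortcut.
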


\begin{proof}
 Let $k$ and $N$ as in Lemma \ref{approx1} for some fixed $s_0$.  Let 
 \[
  \calC'(\kappa_1)_{m,s_0}=\{d_{1,s}d_{2,s}:s\geq 2, s\equiv s_0 \pmod m\}.
 \]
Then, clearly 
\[
 \# \calC'(\kappa_1)_{m,s_0}(X)\leq \#\calA_{m,s_0)}(X)\leq 2^{-k}X+N\leq 2^{-k}X+M_m.
\]
 Let $q_1,\dots,q_k'$ be the distinct primes dividing $M_m$ that do not divide $\alpha_{2,s_0}$.  A similar
 argument to Lemma \ref{approx1} gives
 \[
  \# \calC'(\kappa_1)_{m,s_0}(X)\leq 2^{-k'}X+M_m.
 \]

 Note that $\alpha_{1,s_0}$ and $\alpha_{2,s_0}$ are coprime implying that either $k\geq h_m/2$ or $k'\geq h_m/2$.
 Therefore,
 \[
  \# \calC'(\kappa_1)_{m,s_0}(X)\leq 2^{-h_m/2}X+M_m.
 \]
If we set $s_1,\dots,s_m$ be a complete system of representatives for $s$ modulo $m$, then
\[
 \# \calC'(\kappa_1)(X)\leq 2^{-h_m/2}Xm+mM_m.
\]

\end{proof}




\begin{lemma}
 With notation as above, $\delta(\calC'(\kappa_1))=0$.
\end{lemma}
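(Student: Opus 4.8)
The plan is to feed Lemma \ref{approx2} a well-chosen family of moduli. That lemma gives $\delta_{sup}(\calC'(\kappa_1)) \leq 2^{-h_m/2}m$ for \emph{every} $m\geq 1$, where $h_m=\omega(M_m)$ counts the distinct prime divisors of $M_m=2^m-1$; since the left side does not depend on $m$, it is at most $\inf_{m\geq 1}2^{-h_m/2}m$. Thus it suffices to produce moduli along which $h_m$ outgrows $\log m$ — precisely, $h_m/\log_2 m\to\infty$ — for then $2^{-h_m/2}m\to 0$ and $\delta_{sup}(\calC'(\kappa_1))=0$; and since $\#\calC'(\kappa_1)(X)/X\geq 0$, the limit $\delta(\calC'(\kappa_1))$ then exists and equals $0$.

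I would take $m=m_k:=p_1p_2\cdots p_k$, the product of the first $k$ primes, which has exactly $2^k$ divisors. The point is that a Mersenne number with a highly composite index carries many prime factors. By the Bang--Zsygmondy theorem, for every integer $n\geq 2$ with $n\neq 6$ the number $2^n-1$ has a \emph{primitive} prime divisor $q(n)$, i.e.\ a prime dividing $2^n-1$ but no $2^j-1$ with $1\leq j<n$. If $n<n'$ are two divisors of $m_k$ with $q(n)=q(n')=:q$, then $q\mid\gcd(2^n-1,2^{n'}-1)=2^{\gcd(n,n')}-1$ and $\gcd(n,n')\leq n<n'$, contradicting primitivity at $n'$; hence the $q(n)$, as $n$ runs over the divisors of $m_k$ other than $1$ and $6$, are pairwise distinct. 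Since $n\mid m_k$ implies $M_n\mid M_{m_k}$, each such $q(n)$ divides $M_{m_k}$, and therefore $h_{m_k}=\omega(M_{m_k})\geq 2^k-2$ for all $k\geq 2$.

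Finally, $\log m_k=\sum_{i=1}^k\log p_i\leq \frac{1}{2}k(k+1)\log 2$, using $p_i\leq 2^i$ (which follows by induction from Bertrand's postulate), so
\[
\log\!\left(2^{-h_{m_k}/2}m_k\right)\leq -\frac{1}{2}(2^k-2)\log 2+\frac{1}{2}k(k+1)\log 2\longrightarrow-\infty
\]
as $k\to\infty$. Hence $2^{-h_{m_k}/2}m_k\to 0$, and by the reduction in the first paragraph $\delta(\calC'(\kappa_1))=0$. The only non-elementary ingredient, and the step I expect to be the crux, is the existence of primitive prime divisors of $2^n-1$ (Bang--Zsygmondy); the rest is the divisor combinatorics of a primorial together with the crude size estimate above.
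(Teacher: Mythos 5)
Your argument is correct and follows the same overall route as the paper: both feed Lemma \ref{approx2} a primorial modulus, exploit a lower bound of the form $h_m\geq 2^{\omega(m)}-2$, and let the double-exponential growth of $h_m$ crush the single-exponential growth of $m$. The one substantive difference is how the two ingredients are sourced. The paper simply cites the estimate $h_m\geq 2^{\omega(m)}-2$ as Corollary 7.5 of \cite{FS} and then invokes the prime number theorem ($\pi(y)\sim y/\log y$, $\vartheta(y)\sim y$) to compare $h_m$ with $\log m$; you instead re-derive the key inequality from scratch via Bang--Zsygmondy (primitive prime divisors of $2^n-1$ for the $2^k$ divisors of the primorial, minus the two exceptional indices $1$ and $6$), and you replace the PNT input with the elementary Bertrand-postulate bound $p_i\leq 2^i$, yielding the crude but sufficient estimate $\log m_k\leq \tfrac{1}{2}k(k+1)\log 2$. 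Your version is therefore self-contained and requires less from analytic number theory, at the cost of spelling out the Zsygmondy argument that the paper offloads to \cite{FS}; the paper's version is shorter and gives slightly sharper control, but neither gain matters here since any bound $2^{-h_{m_k}/2}m_k\to 0$ suffices. Both proofs are valid.
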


\begin{proof}
 By Lemma \ref{approx2}, we have
 \begin{equation}\label{deltasup}
  0\leq\delta_{sup}(\calC'(\kappa_1))\leq\frac{m}{2^{(h_m/2)}}\leq \frac{m}{2^{(2^{\omega(m)-1})-1}}
 \end{equation}
for any $m\geq 1$, where the last inequality follows from the fact that $h_m\geq 2^{\omega{m}}-2$ [Corollary 7.5 in \cite{FS}].
Assuming $y$ is large and setting $m=\prod_{p\leq y}p$, note that $\omega(m)=\pi(y)$ and 
$m=\exp(\vartheta(y))$, where $\pi$ and $\vartheta$ are the prime counting function and the first Chebyshev function.  Since
\[
 \pi(y)\sim y/ \log y
\mbox{ and } \vartheta(y)\sim y
\]
by the prime number theorem (see e.g., [Apo76, ch. 4]), we obtain $\delta_{sup}(\calC'(\kappa_1))=0$ as $y\to \infty$ in 
\ref{deltasup}.
\end{proof}

\Addresses
\end{document}